\titleformat{\section}[block]{\Large \sc\center}{\thesection.}{.5em}{}
\newcommand{\R}[0]{\mathbb{R}}
\renewcommand{\P}[0]{\mathbb{P}}
\newcommand{\N}[0]{\mathbb{N}}
\newcommand{\Q}[0]{\mathbb{Q}}
\newcommand{\Z}[0]{\mathbb{Z}}
\newcommand{\0}{\mathbf{0}}
\newcommand{\X}[0]{{\mathbf x}}
\newcommand{\bepsilon}[0]{{\boldsymbol \epsilon}}
\newcommand{\z}{\mathbf{z}}
\renewcommand{\a}{\mathbf{a}}
\renewcommand{\t}[0]{{\mathbf t}}
\newcommand{\K}[0]{{\mathbf k}}
\newcommand{\J}[0]{{\mathbf j}}
\renewcommand{\v}{\mathbf{v}}
\newcommand{\del}[0]{\partial}
\renewcommand{\i}{\mathrm{i}}
\newcommand{\D}[0]{\mathrm{d}}
\newcommand{\zz}[0]{\mathrm{z\kern-.3em\raise-0.4ex\hbox{z}}}
\newcommand{\bigO}[1]{O \left( #1 \right)}
\newcommand{\subgauss}[1]{\left\lfloor#1\right\rfloor}
\newcommand{\abs}[1]{\left\vert#1\right\vert}
\newcommand{\idl}[1]{\mathfrak{#1}}
\renewcommand{\hat}{\widehat}
\renewcommand{\backslash}{\setminus}
\newcommand{\xcrit}{\hat{\X}_{\J; \hat{\K}}}
\newcommand{\xcrita}{\hat{\X}_{\J; \hat{\a}}}
\newcommand{\xcritd}{\hat{\X}_{d; \J; \hat{\K}}}
\DeclareMathOperator{\rank}{rank}
\DeclareMathOperator{\supp}{supp}
\DeclareMathOperator{\dist}{dist}
\DeclareMathOperator{\Jac}{Jac}
\newtheorem{thm}{Theorem}[section]
\newtheorem{cor}[thm]{Corollary}
\newtheorem{lem}[thm]{Lemma}
\newtheorem{prop}[thm]{Proposition}
\newtheorem{cond}[thm]{Condition}
\newtheorem{conjecture}[thm]{Conjecture}
\theoremstyle{definition}
\newtheorem*{xrem}{Remark}
\def\th@plain{
  \thm@notefont{}   
  \itshape
}
\def\th@definition{
  \thm@notefont{}
  \normalfont 
}
\numberwithin{equation}{section}
\title{\bf On the number of rational points close to a compact manifold under a less restrictive curvature condition}
\author{\sc Florian Munkelt}
\date{}
\begin{document}\noindent
\maketitle

\renewcommand{\abstractname}{}
\begin{abstract}
{\sc Abstract.} Let $\mathscr{M}$ be a compact submanifold of $\R^{M}$. In this article we establish an asymptotic formula for the number of rational points within a given distance to $\mathscr{M}$ and with bounded denominators under the assumption that $\mathscr{M}$ fulfills a certain curvature condition. Our result generalizes earlier work from Schindler and Yamagishi \cite{Dam}, as our curvature condition is a relaxation of that used by them. We are able to recover a similar result concerning a conjecture by Huang and a slightly weaker analogue of Serre's dimension growth conjecture for compact submanifolds of $\R^{M}$.
\end{abstract}
\tableofcontents
\section{Introduction}
Let $\mathscr{M}$ be a compact immersed submanifold of $\R^{M}$ and $R=M-\dim \mathscr{M}$ its codimension. Given an integer $Q\in \N$ and $\delta\geq 0$, we study the number \[N(\mathscr{M}; Q, \delta):=\# \left\{(\a, q)\in \Z^{M}\times \N\left|\, 1\leq q\leq Q, \dist\left(\frac{\a}{q}, \mathscr{M}\right)\leq \frac{\delta}{q}\right.\right\}\]
of rational points with denominators bounded by $Q$ and $L^{\infty}$-distance to $\mathscr{M}$ bounded by $\delta$.
The study of rational points lying 'close' to manifolds is an approach to the fundamental study of rational points on algebraic varieties. As such it has relevant applications to a number of problems within Diophantine geometry, besides being an interesting problem in its own right. See for example \cite{Elki}, \cite[\S2-\S5]{Hua1}, \cite{Hugh}, \cite{Maz} or \cite{Dam2}.

We can readily state the trivial estimate
\[N(\mathscr{M}; Q, \delta)\ll Q^{\dim\mathscr{M}+1},\]
and employing a probabilistic heuristic yields
\[\delta^{R}Q^{\dim \mathscr{M}+1}\ll N(\mathscr{M}; Q, \delta)\ll \delta^{R}Q^{\dim \mathscr{M}+1}.\]
It is known that this heuristic does not hold unconditionally. For example, if $\mathscr{M}$ is a rational hyperplane in $\R^{M}$ and $\delta \leq 1$, then we find that
\[Q^{\dim\mathscr{M}+1}\ll N(\mathscr{M}; Q, \delta)\ll Q^{\mathscr{M}+1}.\]
We additionally see from that example, that in order to establish non-trivial bounds we may be inclined to study manifolds with a 'proper curvature condition'. Huang proposed the following conjecture in his groundbreaking work \cite{Hua1}.

\begin{conjecture}\label{conj}
Let $\mathscr{M}$ be a bounded immersed submanifold of $\R^{M}$ with boundary. Let $R=M-\dim\mathscr{M}$ and suppose $\mathscr{M}$ satisfies a 'proper curvature condition'. Then there exists a constant $c_{\mathscr{M}}>0$ depending only on $\mathscr{M}$ such that
\[N(\mathscr{M}; Q, \delta)\sim c_{\mathscr{M}}\delta^{R}Q^{\dim \mathscr{M}+1}\]
when $\delta\geq Q^{-\frac{1}{R}+\epsilon}$ for some $\epsilon>0$ and $Q\rightarrow \infty$.
\end{conjecture}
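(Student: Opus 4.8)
\medskip\noindent\textbf{Proof strategy.}
Although phrased as a conjecture, the statement is established --- once ``proper curvature condition'' is made precise, as it will be below --- by the following Fourier-analytic scheme, in the spirit of Huang~\cite{Hua1} (hypersurfaces, non-vanishing Gaussian curvature) and Schindler--Yamagishi~\cite{Dam} (higher codimension, under a stronger non-degeneracy hypothesis). Write $d=\dim\mathscr{M}$. First I would localise: a finite atlas and a smooth partition of unity reduce the count to that of pairs $(\a,q)$, $1\le q\le Q$, with $\a/q$ within $\delta/q$ of a single graph $\{(\X,\psi(\X)):\X\in U\}$, $\psi\colon U\to\R^{R}$ smooth and $U\subseteq\R^{d}$ bounded; writing $\a=(\a',\a'')\in\Z^{d}\times\Z^{R}$, the condition $\dist(\a/q,\mathscr{M})\le\delta/q$ becomes, up to implied constants controlled by $\nabla\psi$, simply $\norm{\a''-q\,\psi(\a'/q)}_{\infty}\le\delta$. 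Sandwiching the indicator between smooth majorant and minorant weights, it then suffices to evaluate, for fixed $w\in C_{c}^{\infty}(\R^{d})$ and a fixed bump $\chi\in C_{c}^{\infty}(\R^{R})$, the smoothed count
\[
  S(Q,\delta)=\sum_{q\le Q}\sum_{\a'\in\Z^{d}}\sum_{\a''\in\Z^{R}}w\bigl(\a'/q\bigr)\,\chi\bigl((\a''-q\,\psi(\a'/q))/\delta\bigr),
\]
showing $S(Q,\delta)=c\,\delta^{R}Q^{d+1}+o(\delta^{R}Q^{d+1})$ for some $c>0$, uniformly for $\delta\ge Q^{-1/R+\epsilon}$; the constant $c_{\mathscr{M}}$ is recovered by patching the (positive) local contributions.

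Next I would pass to dual sums. Poisson summation in $\a''$ splits $S$ according to a frequency $\mathbf{r}\in\Z^{R}$. The term $\mathbf{r}=\0$ equals $\hat\chi(\0)\,\delta^{R}\sum_{q\le Q}\sum_{\a'}w(\a'/q)$, which upon Poisson summation (or Euler--Maclaurin) in $\a'$ and summation over $q$ produces the main term $c\,\delta^{R}Q^{d+1}$ with $c=\hat\chi(\0)\bigl(\int_{\R^{d}}w\bigr)/(d+1)$. Each $\mathbf{r}\ne\0$ contributes $\hat\chi(\delta\mathbf{r})$ times an oscillatory sum over $(q,\a')$; a further Poisson summation in $\a'$ converts it into $q^{d}$ times a series over $\mathbf{m}\in\Z^{d}$ of the exponential integrals
\[
  I_{q,\mathbf{r},\mathbf{m}}=\int_{\R^{d}}w(\X)\,\e\bigl(-q(\mathbf{r}\cdot\psi(\X)+\mathbf{m}\cdot\X)\bigr)\,\D\X,\qquad \e(t)=\e^{2\pi\i t}.
\]
Non-stationary phase restricts the effective range to $\abs{\mathbf{m}}\ll\abs{\mathbf{r}}$, the decay of $\hat\chi$ truncates $\abs{\mathbf{r}}\le\delta^{-1+o(1)}$, and small $q$ are treated directly; so the entire error reduces to bounding $I_{q,\mathbf{r},\mathbf{m}}$ and summing.

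The phase of $I_{q,\mathbf{r},\mathbf{m}}$ has Hessian $\sum_{j}r_{j}\nabla^{2}\psi_{j}(\X)$, independent of $\mathbf{m}$, and estimating this oscillatory integral under the \emph{relaxed} curvature hypothesis is the main obstacle. Where the Hessian is nondegenerate, stationary phase gives $I_{q,\mathbf{r},\mathbf{m}}\ll(q\abs{\mathbf{r}})^{-d/2}$, and summing this against $\hat\chi(\delta\mathbf{r})$ over $\mathbf{m},\mathbf{r},q$ is admissible exactly when $\delta\ge Q^{-1/R+\epsilon}$; this reproduces the classical case. Under the weaker condition, however, $\sum_{j}r_{j}\nabla^{2}\psi_{j}$ may be singular --- even identically so --- for some directions $\mathbf{r}$ and some $\X\in U$. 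The plan is to stratify $U$ and the sphere of directions by the rank $\rho=\rho(\X,\mathbf{r})$ of this Hessian, proving on each stratum the weaker bound $I_{q,\mathbf{r},\mathbf{m}}\ll(q\abs{\mathbf{r}})^{-\rho/2}$ by iterated van der Corput together with a bound for the measure of the low-rank loci; the curvature condition is precisely what forces $\rho$ to be large enough on a large enough set that the weighted sum of these contributions stays $o(\delta^{R}Q^{d+1})$ in the stated range. The hard part is the uniform treatment of the transitional regions where the rank drops --- interpolating between stationary-phase decay and the trivial bound and controlling the relevant sublevel sets --- and making every estimate uniform in $q$; the rest is bookkeeping. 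Finally, adding the $\mathbf{r}=\0$ main term to these error bounds for both the majorant and the minorant weight and letting the smoothing parameters tend to their limits yields the asserted asymptotic.
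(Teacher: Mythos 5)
The statement you are addressing is Conjecture \ref{conj}. It is an open conjecture, stated with a deliberately unspecified hypothesis (the paper itself remarks that ``proper curvature condition'' is not made explicit), and the paper does not prove it. What the paper proves is Theorem \ref{mainthm} under the concrete Condition \ref{newcurvcon} ($\rank H_{t_{1}f_{1}+\cdots+t_{R}f_{R}}(\X_{0})\geq n-1$), and the resulting asymptotic holds only for $\delta\geq Q^{-\frac{n-1}{n+2R-1}+\epsilon}$; this contains the conjectured range $\delta\geq Q^{-1/R+\epsilon}$ only when $R>1$ and $n\geq\frac{3R-1}{R-1}$. So your text must be judged as a programme, not a proof, and as a proof of the conjecture it has a genuine gap.

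The first half of your scheme is sound and is essentially the Huang/Schindler--Yamagishi route that this paper also follows (localisation, sandwiching the indicator --- the paper uses Selberg's magic functions rather than smooth bumps --- Poisson summation, isolation of the zero frequency, stationary and non-stationary phase on the dual side). The gap is exactly where you write ``the hard part is'': the frequencies $\mathbf{r}$ for which $\sum_{j}r_{j}\nabla^{2}\psi_{j}$ degenerates. Your proposed stratification by rank with the bound $I_{q,\mathbf{r},\mathbf{m}}\ll(q\abs{\mathbf{r}})^{-\rho/2}$ is not carried out, and the assertion that the resulting contribution ``stays $o(\delta^{R}Q^{d+1})$ in the stated range'' is precisely what fails with current techniques: the paper implements a careful version of this idea (fibering over one coordinate so that the $(n-1)\times(n-1)$ Hessian is nondegenerate, applying stationary phase in $n-1$ variables, and feeding the output into a counting theorem for the Legendre-transformed phase), and the unavoidable loss of half a power of $q\abs{\mathbf{r}}$ --- decay $(qj_{1})^{-(n-1)/2}$ instead of $(qj_{1})^{-n/2}$ --- is exactly what shrinks the admissible range of $\delta$ below $Q^{-1/R+\epsilon}$ in general. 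Your outline also omits the second essential input, namely the estimate for the number of $(\J,\K)$ with $\Vert j_{1}\varphi_{y}(\xcrit)\Vert$ small (Proposition \ref{essential} here, resting on Theorem 2 of \cite{Hua1} via a second Poisson summation applied to the Legendre transform); without it the first sum in (\ref{N1y}) cannot be controlled at all. In short, the approach you describe is the right one for the known partial results, but it does not prove the conjecture, and the degenerate-Hessian step is an open problem rather than ``bookkeeping''.
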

It is not made explicit what 'proper curvature conditions' means in the given context.

The first non-trivial case that has been studied extensively is that of a compact curve in $\R^{2}$ with curvature bounded away from zero. In this setting, Huxley \cite{Hux} was the first to obtain a notable upper bound for a $\mathcal{C}^{2}$ curve $\mathscr{C}$, which has later been given in the version
\[N(\mathscr{C}; Q, \delta)\ll_{\mathscr{C}} \delta^{1-\epsilon} Q^{2}+Q\log Q\]
for any $\delta$ and $Q$ in \cite{Vaug}. In fact, Vaughan and Velani \cite{Vaug} showed that
\[N(\mathscr{C}; Q, \delta)\ll_{\mathscr{C}} \delta Q^{2}+Q^{1+\epsilon}\]
for a $\mathcal{C}^{3}$ curve $\mathscr{C}$, which is the upper bound that Conjecture \ref{conj} predicts.

Conversely, a  sharp lower bound has been established by Beresnevich, Dickinson and Velani \cite{Ber1}
\[\delta Q^{2}\ll _{\mathscr{C}}N(\mathscr{C}; Q, \delta)\]
with $\delta\gg Q^{-1}$ and $\delta Q\rightarrow \infty$, for a $\mathcal{C}^{3}$ curve $\mathscr{C}$ admitting at least one point with non-vanishing curvature. Further work by Huang \cite{Hua2} established an asymptotic formula for $\mathcal{C}^{3}$ curves. The interested reader may find more details on the case for planar curves in \cite{Hua2}.

For the case of general manifolds Beresnevich established the sharp lower bound
\[\delta^{k}Q^{\dim \mathscr{M}+1}\ll_{\mathscr{M}}N(\mathscr{M}; Q, \delta)\]
for any $\delta \gg Q^{-\frac{1}{R}}$, assuming $\mathscr{M}$ is an analytic submanifold of $\R^{M}$ which admits at least one non-degenerate point, in his spectacular work \cite{Ber2}.

Huang established Conjecture \ref{conj} in the case  when $\mathscr{M}$ is  a hypersurface with Gaussian curvature bounded away from zero in $\R^{M}$ in \cite{Hua1}.

A recent generalization of this result ist due to Schindler and Yamagishi \cite{Dam}, who established the Conjecture \ref{conj} in the case of a compact immersed submanifold of $\R^{M}$ in codimension $R$ with a curvature condition that reduces to Huang's case for $R=1$. In particular, if the manifold $\mathscr{M}$ is locally parametrized by the functions $f_{1}, ..., f_{R}$, the required curvature condition is as follows.

\begin{cond}\label{oldcurvcon}
Given any $\t\in \R^{R}\backslash \{0\}$, we have
\[\det H_{t_{1}f_{1}+\cdots +t_{R}f_{R}}(\X_{0})\neq 0,\]
where $H_{f}$ denotes the Hessian matrix of the function $f$.
\end{cond}

We continue by presenting the details of our main result. By the compact nature of $\mathscr{M}$, the argument reduces to a finite number of local arguments, hence we may assume without loss of generality that
\begin{equation}\label{manifolddef}
\mathscr{M}:=\{(\X, f_{1}(\X),..., f_{R}(\X))\in \R^{M}\mid \X=(x_{1},..., x_{n})\in \overline{B_{\varepsilon_{0}}(\X_{0})}\},
\end{equation}
where $\X_{0}\in \R^{n}$, $\varepsilon_{0}>0$ and $f_{r}\in \mathcal{C}^{\ell}(\R^{n})$ for $1\leq r\leq R$ and some $\ell\geq 2$. Note that this specifically means $\dim \mathscr{M}=n$.\\
Let $\omega\in \mathcal{C}^{\infty}_{c}(\R^{n})$ be a non-negative weight function that is compactly supported in a sufficiently small neighbourhood of $\X_{0}$ and define
\[N_{\omega}(Q, \delta)=\sum_{\substack {\a\in \Z^{n}\\ q\leq Q \\  ||qf_{r}(\a/q)||\leq \delta \\  1\leq r\leq R }}{\omega\left(\frac{\a}{q}\right)},\]
where $||\cdot||$ denotes the distance to the closest integer. Obviously $||x||\leq 1/2$ for any $x\in \R$, hence we only consider $0\leq \delta \leq 1/2$. Let
\[N_{0}:=\sum_{\substack{\a\in \Z^{n}\\ q\leq Q}}{\omega\left(\frac{\a}{q}\right)}.\]
For a given function $f\in \mathcal{C}^{2}(\R^{n})$ we denote by $H_{f}(\X)$ the Hessian matrix of $f$ evaluated at $\X$, i.e. the $n\times n$-matrix whose entries are $\frac{\del^{2}f}{\del x_{\mu}\del x_{\nu}}(\X)$ for $1\leq \mu, \nu\leq n$.\\
We use the following relaxed curvature condition throughout this article.

\begin{cond}\label{newcurvcon}
Given any $\t\in \R^{R}\backslash \{0\}$, we have
\[\rank H_{t_{1}f_{1}+\cdots +t_{R}f_{R}}(\X_{0})\geq n-1.\]
\end{cond}
With these notations we have the following result.

\begin{thm}\label{mainthm}
Let $n\geq 3$ and $\ell>\max\{n+1, \frac{n}{2}+4\}$. Suppose \ref{newcurvcon} holds and that $\varepsilon_{0}>0$ is sufficiently small. Then we have
\[|N_{\omega}(Q, \delta)-(2\delta)^{R}N_{0}|\ll\left\{\begin{array}{*{2}{c}} \delta^{\frac{(R-1)(n-1)}{n+1}}Q^{n+\frac{2}{n+1}}\mathscr{E}_{n-1}(Q)&\mbox{if $\delta \geq Q^{-\frac{n-1}{n+2R-1}}$,}\\ Q^{n-\frac{(R-1)n+1-3R}{n+2R-1}}\mathscr{E}_{n-1}(Q)&\mbox{if $\delta <Q^{-\frac{n-1}{n+2R-1}} $,} \end{array}\right.\]
where
\[\mathscr{E}_{n-1}(Q)=\left\{\begin{array}{*{2}{c}} \exp(\idl{c}_{1}\sqrt{\log Q})& \mbox{if $n=3$,}\\ (\log Q)^{\idl{c}_{2}}& \mbox{if $n\geq 4$,}\end{array}\right.\]
for some positive constants $\idl{c}_{1}$ and $\idl{c}_{2}$. These constants as well as the implicit constants only depend on $\mathscr{M}$ and $\omega$.
\end{thm}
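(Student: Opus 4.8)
The plan is to follow the Fourier-analytic circle-method strategy of Huang and of Schindler–Yamagishi, adapting it to the weaker rank condition. We begin by smoothing the indicator of the distance condition: writing the condition $\|qf_r(\a/q)\|\le\delta$ via a one-dimensional majorant/minorant Fourier expansion (a Selberg-type construction), we convert $N_\omega(Q,\delta)$ into a main term $(2\delta)^R N_0$ plus an error controlled by exponential sums of the form
\[
S(\mathbf{u};q)=\sum_{\a\in\Z^n}\omega\!\left(\frac{\a}{q}\right)e\!\left(\frac{1}{q}\bigl(u_1 f_1(\a/q)+\cdots+u_R f_R(\a/q)\bigr)\cdot q\right),
\]
i.e. after rescaling, sums $\sum_\a \omega(\a/q)\,e(q\,\langle\t,\mathbf{f}\rangle(\a/q))$ with $\t$ ranging over a box of size $\sim\delta$ and with $1\le q\le Q$. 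Applying Poisson summation in $\a$ turns each such sum into a sum over the dual lattice of oscillatory integrals
\[
I(q,\t,\mathbf{m})=\int_{\R^n}\omega(\X)\,e\!\bigl(q\langle\t,\mathbf{f}(\X)\rangle-q\langle\mathbf{m},\X\rangle\bigr)\,\D\X,
\]
and the whole problem reduces to bounding these integrals and summing the bounds over $q$, $\t$, and $\mathbf{m}$.

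The heart of the matter is the stationary-phase analysis of $I(q,\t,\mathbf{m})$. Under Condition \ref{oldcurvcon} one has a nondegenerate Hessian for the phase $\langle\t,\mathbf{f}\rangle$, giving the clean decay $|I|\ll (q|\t|)^{-n/2}$; here we only know $\rank H_{\langle\t,\mathbf f\rangle}(\X_0)\ge n-1$, so after shrinking $\varepsilon_0$ the Hessian of the phase has rank at least $n-1$ throughout the support of $\omega$, but there may be one genuinely degenerate direction. The plan is to split coordinates into the $n-1$ elliptic directions and the single remaining direction: integrate by (multivariate) stationary phase in the nondegenerate block to gain $(q|\t|)^{-(n-1)/2}$, and in the last variable use a van der Corput / sublevel-set estimate to extract whatever additional decay the higher-order vanishing of the phase permits — at worst a gain of a power $(q|\t|)^{-1/k}$ for some finite $k$, but more robustly one controls the one-variable integral trivially or by a first-derivative bound, producing the exponent $(n-1)$ that appears throughout Theorem~\ref{mainthm}. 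One must be careful that the implied constants remain uniform as $\t$ varies over the sphere, which follows by compactness together with the fact that the rank-$(n-1)$ condition is open; the subtle point is tracking the location of the degenerate direction, which moves with $\t$, and handling the transition region where an eigenvalue of the Hessian is small but nonzero (here a dyadic decomposition in the size of that eigenvalue, balanced against the van der Corput gain, is needed).

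With the bound $|I(q,\t,\mathbf m)|\ll (q|\t|)^{-(n-1)/2}(\text{mild factor})$ in hand, together with the complementary large-$|\mathbf m|$ estimate obtained by repeated non-stationary integration by parts (which makes $\mathbf m$ with $|\mathbf m|\gg |\t|$ contribute negligibly), we sum over the dual variable $\mathbf m$, then over $q\le Q$, then over the box of $\t$'s of sidelength $\delta$. The $q$-sum of $q^{-(n-1)/2}\cdot q^{n}$ (the $q^n$ coming from the number of lattice points $\a$ with $\a/q\in\supp\omega$, or equivalently from unwinding the rescaling) is where the $n\ge 3$ hypothesis and the precise exponents $n+\frac{2}{n+1}$ and $\frac{(R-1)(n-1)}{n+1}$ crystallize; the $\t$-box of size $\delta$ contributes either a full power $\delta^R$ (absorbed into comparison with the main term) or, in the regime where the oscillation in $\t$ itself can be exploited, a smaller power $\delta^{(R-1)(n-1)/(n+1)}$, which is the source of the two cases in the statement. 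The factor $\mathscr{E}_{n-1}(Q)$ arises from a divisor-type sum over $q$ (counting solutions to a congruence/gcd condition among the $\a$ and $\t$ variables): for $n=3$ the relevant exponent sits exactly at the boundary and one picks up $\exp(\idl c_1\sqrt{\log Q})$ from an estimate of the form $\sum_{q\le Q} d(q)^{A}$-type or a Rankin-trick bound, while for $n\ge4$ a power of $\log Q$ suffices.

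I expect the main obstacle to be precisely the degenerate-direction analysis of the oscillatory integral uniformly in $\t$: one must produce a bound that (i) recovers $(q|\t|)^{-(n-1)/2}$ cleanly from the elliptic block, (ii) loses as little as possible — ideally nothing beyond a bounded factor, or at worst a $Q^{\epsilon}$ — from the bad direction, and (iii) is uniform as the bad direction rotates with $\t\in S^{R-1}$, including through parameter values where the smallest eigenvalue of the Hessian degenerates. The secondary technical obstacle is bookkeeping the Selberg smoothing so that the truncation parameter in the dual sum is chosen optimally against the two regimes of $\delta$, which is what forces the precise threshold $\delta\gtrless Q^{-(n-1)/(n+2R-1)}$ and the smoothness requirement $\ell>\max\{n+1,\tfrac n2+4\}$ (enough derivatives to integrate by parts the requisite number of times in both the $\mathbf m$-aspect and the $\t$-aspect).
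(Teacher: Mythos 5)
Your outline captures the first half of the paper's argument correctly: Selberg majorants/minorants reduce the problem to exponential sums, Poisson summation converts these into oscillatory integrals, and the rank-$(n-1)$ hypothesis is handled by separating one coordinate. The paper's implementation of that last point is cleaner than what you propose: rather than doing van der Corput with a dyadic decomposition in the small eigenvalue of a direction that rotates with $\t$, it partitions the sphere of directions into finitely many closed pieces $T_{\nu}$ on each of which a \emph{fixed} $(n-1)\times(n-1)$ principal minor of the Hessian is uniformly nondegenerate, then freezes $x_{n}=y$ as a parameter and runs the entire analysis for the family $\hat f_{r,y}$ on $\R^{n-1}$, integrating trivially over $y$ at the end (the dual variable $k_{n}$ is disposed of by non-stationary phase when $|k_{n}|>2j_{1}L$). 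Your transition-region worry simply does not arise in that formulation.

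The genuine gap is that your plan stops at ``bound $|I|\ll(qj_{1})^{-(n-1)/2}$ and sum over $q$, $\t$, $\mathbf m$,'' and that cannot yield the stated exponents. Summing the stationary-phase bound absolutely over $q\le Q$ loses the oscillation of the factor $e(qj_{1}\varphi_{y}(\xcrit))$, where $\varphi_{y}(\xcrit)$ is essentially the Legendre transform $\hat F^{\ast}_{y,\J}$ evaluated at the rational point $\hat\K/j_{1}$. The paper performs partial summation in $q$, bounds $\sum_{q\le Q}e(qj_{1}\varphi_{y}(\xcrit))\ll\min\{Q,\|j_{1}\varphi_{y}(\xcrit)\|^{-1}\}$, and is then forced to count the pairs $(\J,\K)$ for which $\|j_{1}\varphi_{y}(\xcrit)\|$ is small. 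That count (Proposition \ref{essential}) is itself a rational-points-near-a-manifold problem for the dual family $\hat F^{\ast}_{y,\J}$, attacked by a second round of Fej\'er-kernel smoothing, Poisson summation and stationary phase, and resolved by invoking Huang's counting theorem for nondegenerate hypersurfaces in dimension $n-1$. This duality step is the heart of the Huang and Schindler--Yamagishi method and is entirely absent from your proposal. Relatedly, your attribution of $\mathscr{E}_{n-1}(Q)$ to a divisor-type sum over $q$ is incorrect: the factor is inherited directly from Huang's hypersurface theorem applied in dimension $n-1$ (which is why the $n=3$ case, i.e.\ surfaces, carries $\exp(\idl{c}_{1}\sqrt{\log Q})$), not from any Rankin-trick estimate. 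A smaller inaccuracy: after Selberg smoothing the dual parameters are integers $\J$ with $1\le|j_{r}|\le J$ weighted by $b_{j_{r}}\asymp\frac{1}{J}+\min(\delta,\frac{1}{|j_{r}|})$, not a continuous box of side $\delta$; the two regimes of the theorem come from optimizing the free truncation parameter $J$ against $\delta^{-1}$ and $Q^{\frac{n-1}{n+2R-1}}$ at the very end.
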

Comparing our exponents to those obtained in \cite[Theorem 1.2]{Dam}, we have $n+\frac{2}{n+1}$ instead of $n$ in the first case and $n-\frac{(R-1)n+1-3R}{n+2R+1}$ instead of $n-\frac{(n-2)(R-1)}{n+2R-2}$ in the scond one. Note that the cases in \cite{Dam} are distinguished by the comparison of $\delta$ against $Q^{-\frac{n}{n+2R-2}}$. As expected the bounds under the less restrictive curvature condition are worse, yet they have similar growth for large $n$.\\
\ \\
By Poisson summation formula we find that $N_{0}=\sigma Q^{n+1}+\bigO{Q^{n}}$ for some positive constant $\sigma$ depending only on $\omega$ and $n$ (compare \cite[(6.2)]{Hua1}). Combining this with \ref{mainthm} yields
\[N_{\omega}(Q, \delta)=(2\delta)^{R}\sigma Q^{n+1}+\bigO{\delta^{\frac{(R-1)(n+1)}{n+3}}Q^{n+\frac{4}{n+3}}\mathcal{E}_{n-1}(Q)},\]
when $\delta\geq Q^{\frac{n-1}{n+2R-1}+\epsilon}$ for any $\epsilon>0$ sufficiently small. Following the arguments in \cite[Section 7]{Hua1}, we can approximate the characteristic function of $\overline{B_{\varepsilon_{0}}(\X_{0})}$ by smooth weight functions and obtain:
\begin{cor}
Let $\mathscr{M}$ be as in (\ref{manifolddef}), $n\geq 3$ and $\ell >\max\{n+1, \frac{n}{2}+4\}$. Suppose Condition \ref{newcurvcon} holds and that $\varepsilon_{0}>0$ is sufficiently small. Then there exists a constant $c_{\mathscr{M}}>0$ depending only on $\mathscr{M}$ such that
\[N(\mathscr{M}; Q, \delta)\sim c_{\mathscr{M}}\delta^{R}Q^{dim \mathscr{M}+1}\]
when $\delta\geq Q^{-\frac{n-1}{n+2R-1}+\epsilon}$ for any $\epsilon>0$ suffciently small and $Q\rightarrow \infty$.
\end{cor}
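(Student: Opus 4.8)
The plan is to deduce the corollary from Theorem~\ref{mainthm} by the two standard steps of \cite[Section~7]{Hua1}: convert the Euclidean count $N(\mathscr{M};Q,\delta)$ into the arithmetic quantity $N_\omega(Q,\delta)$, and then remove the smooth weight by sandwiching. For the first step, write $\mathscr{M}$ as in (\ref{manifolddef}) and fix $0\le\delta\le1/2$. A triple $(\a,\mathbf b,q)\in\Z^n\times\Z^R\times\N$ with $q\le Q$ contributes to $N(\mathscr{M};Q,\delta)$ only if $\a/q$ lies within $\bigO{\delta/q}$ of $\overline{B_{\varepsilon_0}(\X_0)}$ and $\mathbf b$ is the integer vector nearest to $qf(\a/q)$, while every such pair with $\a/q\in\overline{B_{\varepsilon_0}(\X_0)}$ is counted; a first-order Taylor expansion of $f=(f_1,\dots,f_R)$ about $\a/q$ then identifies the $L^\infty$-distance condition $\dist(q^{-1}(\a,\mathbf b),\mathscr{M})\le\delta/q$, up to a relative error that is small once $\varepsilon_0$ is small, with the condition $\|qf_r(\a/q)\|\le\delta$ for $1\le r\le R$ after $\delta$ is rescaled by a factor $d(\a/q)$ depending continuously on $\nabla f(\a/q)$. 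Decomposing $\overline{B_{\varepsilon_0}(\X_0)}$ into finitely many small cells on which $d$ is nearly constant and sandwiching the indicator of each cell between nonnegative functions in $\mathcal{C}^\infty_c(\R^n)$ supported in a neighbourhood of $\X_0$ small enough for Theorem~\ref{mainthm} to apply, one bounds $N(\mathscr{M};Q,\delta)$ above and below by finite sums of quantities of the form $N_\omega(Q,c\delta)$.

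It therefore suffices to evaluate $N_\omega(Q,\delta)$ for $\omega\in\mathcal{C}^\infty_c(\R^n)$. Since $\delta\ge Q^{-\frac{n-1}{n+2R-1}+\epsilon}\ge Q^{-\frac{n-1}{n+2R-1}}$, Theorem~\ref{mainthm} applies in its first case, and combining it with $N_0=\sigma Q^{n+1}+\bigO{Q^n}$ (where $\sigma$ is a positive multiple of $\int\omega$) yields
\[N_\omega(Q,\delta)=2^R\sigma\,\delta^R Q^{n+1}+\bigO{\delta^R Q^n}+\bigO{\delta^{\frac{(R-1)(n-1)}{n+1}}Q^{n+\frac{2}{n+1}}\mathscr{E}_{n-1}(Q)}.\]
The essential computation is that both error terms are $o(\delta^R Q^{n+1})$ precisely in the prescribed range: dividing the second error by $\delta^R Q^{n+1}$ leaves $\delta^{-\frac{n+2R-1}{n+1}}Q^{-\frac{n-1}{n+1}}\mathscr{E}_{n-1}(Q)$, and the hypothesis $\delta\ge Q^{-\frac{n-1}{n+2R-1}+\epsilon}$ supplies a polynomial saving $Q^{-\epsilon'}$, $\epsilon'=\frac{n+2R-1}{n+1}\epsilon>0$, that dominates the subpolynomial factor $\mathscr{E}_{n-1}(Q)$; the first error is negligible a fortiori. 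Hence $N_\omega(Q,\delta)\sim 2^R\sigma\,\delta^R Q^{n+1}$, uniformly over admissible $\delta=\delta(Q)$ as $Q\to\infty$.

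Feeding this back into the sandwich of the first paragraph, the upper and lower bounds for $N(\mathscr{M};Q,\delta)$ both take the form $(\text{const})\cdot\delta^R Q^{n+1}(1+o(1))$; letting first $Q\to\infty$ and then the cell-size and the smoothing error tend to $0$, these constants converge to a common value, giving $N(\mathscr{M};Q,\delta)\sim c_{\mathscr{M}}\delta^R Q^{n+1}$, where $c_{\mathscr{M}}$ is obtained as $2^R/(n+1)$ times an integral over $\overline{B_{\varepsilon_0}(\X_0)}$ of a positive density built from the distortion factors $d(\X)$, so that $c_{\mathscr{M}}>0$. I expect the one genuinely delicate ingredient to be the geometric reduction of the first paragraph — making the passage from the $L^\infty$-distance condition to $\|qf_r(\a/q)\|\le\delta$ precise, with the correct constant, when $R\ge2$ and the $R$ fibre directions are coupled — but this is carried out for hypersurfaces in \cite[Section~7]{Hua1} and extends to arbitrary codimension, so what remains is only the routine bookkeeping of interchanging the limits in $Q$ and in the approximation parameters.
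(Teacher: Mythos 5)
Your proposal is correct and follows essentially the same route as the paper: apply Theorem \ref{mainthm} in its first case together with $N_{0}=\sigma Q^{n+1}+\bigO{Q^{n}}$ to get $N_{\omega}(Q,\delta)\sim (2\delta)^{R}\sigma Q^{n+1}$ in the stated range (your exponent bookkeeping $\delta^{-\frac{n+2R-1}{n+1}}Q^{-\frac{n-1}{n+1}}\mathscr{E}_{n-1}(Q)\ll Q^{-\epsilon'}\mathscr{E}_{n-1}(Q)=o(1)$ is exactly right), and then pass from the weighted count to $N(\mathscr{M};Q,\delta)$ by the approximation argument of \cite[Section 7]{Hua1}, which the paper likewise only cites rather than reproduces. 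The only divergence is cosmetic: you correctly carry the error exponents from Theorem \ref{mainthm}, whereas the displayed intermediate formula in the paper contains what appear to be typos in those exponents and in the sign of the exponent in the range condition.
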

Note that 
\[Q^{-\frac{1}{R}}\geq Q^{-\frac{n-1}{n+2R-1}}\]
only holds for $R>1$ and $n\geq \frac{3R-1}{R-1}$, hence Conjecture \ref{conj} holds in those cases. In fact, the asymptotic formula is obtained beyond the range of $\delta$ that was conjectured in those cases.

If we let $\delta=0$, then our (weighted) counting function gives the (weighted) number of rational points with bounded denominators that lie on the manifold $\mathscr{M}$. Applying the arguments from \cite[pp. 2047]{Hua1} to Conjecture \ref{conj}, we obtain
\[N(\mathscr{M}; Q, 0)\ll Q^{\dim \mathscr{M}+\epsilon},\]
for any $\epsilon>0$ sufficiently small with a generally sharp upper bound. This can be interpreted as an analogue of Serre's famous dimension growth conjecture for projective varieties in the context of smooth submanifolds of $\R^{M}$.

\begin{conjecture}[Serre's dimension growth conjecture]
Let $X\subseteq \P^{M-1}_{\Q}$ be an irreducible projective varietiy of degree at least two defined over $\Q$. Let $N_{X}(B)$ be the number of rational points on $X$ with naive height bounded by $B$. Then
\[N_{X}(B)\ll B^{\dim X}(\log B)^{c}\]
for some constant $c>0$.
\end{conjecture}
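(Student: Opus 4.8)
\textit{Proof proposal.} Although phrased here as a conjecture, this is a theorem of Salberger (building on the $p$-adic determinant method of Heath-Brown), and the route I would take is exactly the determinant method together with an induction on $\dim X$. Write $d=\deg X\geq 2$ and $n=\dim X$.

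\textbf{Reductions.} First I would pass from projective points to integer points on the affine cone: a rational point of naive height $\leq B$ on $X$ lifts, up to sign, to a primitive $\mathbf{x}\in\Z^{M}$ with $\|\mathbf{x}\|_{\infty}\ll B$ lying on the affine cone $C(X)\subseteq\A^{M}$, so it suffices to bound the number of such $\mathbf{x}$. Next I would reduce to hypersurfaces: a generic linear projection $\P^{M-1}\to\P^{n+1}$ with centre a linear space disjoint from $X$ is finite and birational onto its image $X'\subseteq\P^{n+1}$, an integral hypersurface of degree $d$, and the locus where this map fails to be an isomorphism has dimension $\leq n-1$ and is absorbed by the inductive hypothesis in lower dimension. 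So I may assume $X=\{F=0\}\subseteq\P^{n+1}$ is an integral hypersurface of degree $d\geq 2$.

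\textbf{Determinant method.} For a prime $p$ in a dyadic range to be optimised at the end, partition the primitive points on $C(X)$ of size $\ll B$ according to the smooth $\F{p}$-point of $X$ to which they reduce; there are $O(p^{n})$ residue classes. For one class $S$, consider the matrix whose rows are indexed by the points of $S$ and whose columns are the monomials of a chosen degree $D$, evaluated at those points. Hadamard's inequality bounds every maximal minor by roughly $B^{cD}$ for an explicit $c$, while the $p$-adic proximity of the points of $S$ forces that minor to be divisible by a high power of $p$; balancing $p$ and $D$ so that the $p$-power beats the Hadamard bound makes the minor vanish, whence all points of $S$ lie on an auxiliary hypersurface $\{G_{S}=0\}$ of degree $D$ with $G_{S}\notin(F)$. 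Then $X\cap\{G_{S}=0\}$ has dimension $\leq n-1$ and degree $O_{d,D}(1)$; I would split it into irreducible components and recurse, the base case being integral plane curves, where the determinant method already gives $N_{C}(B)\ll_{d}B^{2/d}(\log B)^{c}$, far stronger than needed.

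\textbf{Summation and the main obstacle.} Summing the inductive bounds over the $O(p^{n})$ classes and over dyadic ranges of $B$ and of $p$ produces the claimed $B^{n}(\log B)^{c}$, the logarithmic factors arising precisely from the dyadic sums. The hard part will be controlling the contribution of linear (and other low-degree) subvarieties contained in $X$: a single line on $X$ already carries $\gg B^{2}$ rational points, so the recursion must be supplemented by a bound on the \emph{number} of lines — and, at each stage, of subvarieties beating the expected exponent — lying on $X$, which rests on the fact that an integral variety of degree $d\geq 2$ cannot be ruled by too many of them. A further technical point is that $d=2$ and $d=3$ require sharper input than the generic determinant bound: quadrics are best handled via their rational parametrisation (or Heath-Brown's analysis of quadratic forms), and cubic surfaces need extra care to keep the exponent at $\dim X$ rather than $\dim X+\epsilon$. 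Finally, making the choice of $p$ and $D$ uniform in the coefficients of $F$ yields an implied constant depending only on $d$ and $M$.
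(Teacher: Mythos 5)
The statement you are proving is not proved in the paper at all: it is quoted verbatim as a \emph{conjecture} (Serre's dimension growth conjecture), included only as background to motivate the corollary about $N(\mathscr{M};Q,0)$, and the author simply points to the literature (\cite{Cast}, \cite{HB}, \cite{Salb1}, \cite{Brow4}, etc.) rather than offering any argument. So there is no proof in the paper to compare yours against, and a complete proof is not something one should expect to produce in a page: in the strong form stated here, with the factor $(\log B)^{c}$ and for every degree $d\geq 2$, the statement is not a settled theorem in the generality claimed (the known results, due to Heath-Brown, Browning--Heath-Brown--Salberger, Salberger and Castryck--Cluckers--Dittmann--Nguyen, cover large degree and give $B^{\dim X+\epsilon}$ or remove logarithms only for $d\geq 5$; low degree, especially cubics, is exactly where the difficulty sits).

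As for your outline: it correctly reproduces the standard strategy (reduction to the affine cone, birational projection to a hypersurface, the $p$-adic determinant method producing auxiliary hypersurfaces, induction on dimension), but everything that makes the theorem hard is deferred in your last paragraph. Concretely: (i) the recursion as you set it up loses a factor at each step unless one proves \emph{uniform} bounds, in terms of degree alone, for the number and degrees of the irreducible components of $X\cap\{G_{S}=0\}$ and for the rational points on them — this uniformity is the content of Salberger's global determinant method, not a routine matter; (ii) the contribution of lines and other low-degree subvarieties cannot be handled by the remark that ``$X$ cannot be ruled by too many of them''; one needs quantitative bounds on the Hilbert scheme of such subvarieties and a separate count of points lying on them, and this is where the $d=3$ case resists the log-power bound; (iii) summing over dyadic ranges of $p$ does not by itself produce only $(\log B)^{c}$ — avoiding an extra $B^{\epsilon}$ requires the refinements of \cite{Salb1} and \cite{Cast}. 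So your proposal is a fair road map of the literature, but it is not a proof, and it should not be presented as one for a statement the paper deliberately leaves as a conjecture.
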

The reader may be interested in the large amount of literature concerning the dimension growth conjecture and find \cite{Cast} to be a nice introduction to the topic. For further reading we may refer to several examples, such as \cite{Brob1}, \cite{Brob2}, \cite{Brow1}, \cite{Brow2}, \cite{Brow3}, \cite{Brow4}, \cite{HB}, \cite{Marm}, \cite{Salb1}, \cite{Salb2}, \cite{Salb3}, \cite{Walsh}. Our analogue result for smooth submanifolds of $\R^{M}$ is the following estimate.

\begin{cor}
Let $\mathscr{M}$ be as in (\ref{manifolddef}), $n\geq 4$ and $\ell>\max\{n+1, \frac{n}{2}+4\}$. Suppose Condition \ref{newcurvcon} holds and that $\varepsilon_{0}>0$ is sufficiently small. Then
\[N(\mathscr{M}; Q, 0)\ll Q^{n-\frac{(R-1)n+1-3R}{n+2R-1}}(\log Q)^{c}\]
for some constant $c>0$.
\end{cor}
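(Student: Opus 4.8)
The plan is to obtain this as an immediate consequence of Theorem \ref{mainthm} by taking $\delta = 0$, once the sharp counting function has been bounded above by a smoothly weighted one. Concretely, I would first fix a non-negative $\omega \in \mathcal{C}^{\infty}_{c}(\R^{n})$, supported in a sufficiently small neighbourhood of $\X_0$ (so that Theorem \ref{mainthm} applies with this $\omega$, shrinking $\varepsilon_0$ if necessary) and with $\omega \geq 1$ on the compact set $\overline{B_{\varepsilon_{0}}(\X_{0})}$; such a bump function exists.

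Next I would record the elementary comparison $N(\mathscr{M}; Q, 0) \leq N_\omega(Q, 0)$. Indeed, a point $\a/q$ with $1 \leq q \leq Q$ lying on $\mathscr{M}$ as in (\ref{manifolddef}) has the form $(\a'/q, f_1(\a'/q), \dots, f_R(\a'/q))$ with $\a' \in \Z^{n}$ and $\a'/q \in \overline{B_{\varepsilon_{0}}(\X_{0})}$, in which case $q f_r(\a'/q) = a_{n+r} \in \Z$, i.e.\ $\|q f_r(\a'/q)\| = 0$ for all $1 \leq r \leq R$; since the last $R$ coordinates of $\a$ are determined by $(\a', q)$, the assignment $(\a, q) \mapsto (\a', q)$ is injective, and each such pair contributes a summand $\omega(\a'/q) \geq 1$ to $N_\omega(Q, 0)$. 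Applying Theorem \ref{mainthm} with $\delta = 0$ --- which satisfies $\delta < Q^{-\frac{n-1}{n+2R-1}}$, placing us in the second case, and makes the main term $(2\delta)^{R} N_0$ vanish --- gives
\[N(\mathscr{M}; Q, 0) \leq N_\omega(Q, 0) \ll Q^{\,n - \frac{(R-1)n+1-3R}{n+2R-1}}\, \mathscr{E}_{n-1}(Q),\]
and since $n \geq 4$ we have $\mathscr{E}_{n-1}(Q) = (\log Q)^{\idl{c}_2}$, so the claim holds with $c = \idl{c}_2$. For a compact $\mathscr{M}$ not already presented in the form (\ref{manifolddef}) one first covers it by finitely many such graph patches and sums the resulting bounds, as in \cite[Section 7]{Hua1}.

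I do not expect a genuine obstacle: the entire difficulty is contained in Theorem \ref{mainthm}. The only points requiring a little care are that for $\delta = 0$ a one-sided majorant $\omega$ is all that is needed --- there is no need for the delicate two-sided approximation used to obtain asymptotics, since only an upper bound is asserted --- and that the restriction $n \geq 4$ is precisely what turns the error factor $\mathscr{E}_{n-1}(Q)$ into a power of $\log Q$, whereas for $n = 3$ it is the larger quantity $\exp(\idl{c}_1 \sqrt{\log Q})$, which is not of the form $(\log Q)^{c}$.
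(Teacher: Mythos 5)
Your proposal is correct and follows essentially the same route the paper intends: apply Theorem \ref{mainthm} with $\delta=0$ (which kills the main term and lands in the second case), note that $n\geq 4$ makes $\mathscr{E}_{n-1}(Q)$ a power of $\log Q$, and pass from the sharp count to the weighted one via a smooth majorant together with the finite covering by graph patches. The only detail worth keeping explicit is the one you already flagged — that only the upper-bound (majorant) half of the argument is needed at $\delta=0$ — and your comparison $N(\mathscr{M};Q,0)\leq N_{\omega}(Q,0)$ is carried out correctly.
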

Note that in contrast to the situation in \cite{Dam} we do not unconditionally break the $\dim \mathscr{M}$ barrier here, only if $n>\frac{3R-1}{R-1}$ and $R>1$.

We adapt the strategy for proving Theorem \ref{mainthm} established in \cite{Dam}, which relies on the methods developed by Huang in \cite{Hua1} and fibration arguments. In particular, Schindler and Yamagishi reduced the problem to that for one function, such that the main result of \cite{Hua1} can be used. This is achieved by a more complicated version of the procedure developed by Huang in \cite{Hua1}, which  relates the counting problem of a function to that of its Legendre transform, for a family of functions satisfying the curvature condition \ref{oldcurvcon} and apllying it twice. For our relaxed curvature condition \ref{newcurvcon} we can use a similar approach with some necessary adjustments to accomodate an additional degree of freedom. After collecting some preliminary results in Section \ref{prelim}, we discuss the setup of our proof for  Theorem \ref{mainthm} in Section \ref{setup}. Section \ref{secbounds} is dedicated to establishing some auxiliary bounds, one of which depending on a result which is proven in Section \ref{secess}. Lastly, we combine our findings to prove Theorem \ref{mainthm} in Section \ref{maintheoremsec}.

\section{Preliminaries}\label{prelim}
In this article we denote by $\mathcal{C}^{\ell}(\mathcal{V})$ the set of $\ell$-times continuously differentiable functions defined on an open set $\mathcal{V}\subseteq \R^{n}$. Analogously $\mathcal{C}^{\infty}(\mathcal{V})$ denotes the set of smooth functions defined on $\mathcal{V}$ and $\mathcal{C}^{\infty}_{c}(\mathcal{V})$ the set of smooth functions defined on $\mathcal{V}$ that have a  compact support. Given any $f\in \mathcal{C}^{1}(\R^{n})$ we let $\nabla f=(\frac{\del f}{\del x_{1}}, ..., \frac{\del f}{\del x_{n}})$ be the gradient of $f$. For a subset $X\subseteq \R^{n}$ we denote the boundary of $X$ by $\del X=\overline{X}\backslash X^{\circ}$, where $\overline{X}$ denotes the closure of $X$ and $X^{\circ}$ denotes the interior of $X$. For any $z\in \R$ we let $e(z)=e^{2\pi \i z}$ and $||z||$ denotes the distance to the closest integer. For $\z=(z_{1}, ..., z_{n})\in \R^{n}$ let $|\z|=\max_{1\leq i\leq n}|z_{i}|$ denote the $L^{\infty}$-Norm and given any $\varepsilon >0$ we let 
\[B^{n}_{\varepsilon}(\z)=\{\X\in \R^{n}\mid |\X-\z|<\varepsilon\}=(z_{1}-\varepsilon, z_{1}+\varepsilon)\times \cdots \times (z_{n}-\varepsilon, z_{n}+\varepsilon).\] We may write $B_{\varepsilon}(\z)$ instead of $B^{n}_{\varepsilon}(\z)$ if the dimension is clear from context. By the notation $f(\X)\ll g(\X)$ or $f=\bigO{g(\X)}$ we mean that there exists a constant $C>0$ such that $|f(\X)|\leq Cg(\X)$ for all $\X$ in consideration.\\
\\ \ 
Given $F\in \mathcal{C}^{\ell}(\R^{n})$ let $\mathcal{U}\subseteq \R^{n}$ be an open subset such that $\nabla F$ is invertible on $\mathcal{U}$. We define the Legendre transform $F^{\ast}\colon \nabla F(\mathcal{U})\rightarrow \R$ of $F$ by
\[F^{\ast}(\z)=\z\cdot (\nabla F)^{-1}(\z)-(F\circ (\nabla F)^{-1})(\z).\]
It can be verified that $F^{\ast}$ is $\ell$-times continuously differentiable, $F^{\ast\ast}=F$ and $\nabla F^{\ast}=(\nabla F)^{-1}$. If $\X=\nabla F(\z)$ we obviously have
\begin{equation}\label{legendrepoint}
F^{\ast}(\X)=\X\cdot \z-F(\z)
\end{equation}
and furthermore
\begin{equation}\label{legendrematrix}
H_{F^{\ast}}(\X)=H_{F}(\z)^{-1}.
\end{equation}
For the following results on oscillatory integrals, we refer the reader to see for example \cite[Theorem 7.71 and Theorem 7.7.5]{Hor}.

\begin{lem}[non-stationary phase]\label{nonstat}
Let $\ell\in \N$ and $U_{+}\subseteq \R^{d}$ a bounded open set. Let $\omega\in \mathcal{C}_{c}^{\ell-1}(\R^{d})$ with $\supp\omega\subseteq U_{+}$ and $\varphi\in \mathcal{C}^{\ell}(U_{+})$ with $\nabla\varphi(\X)\neq \textbf{0}$ for all $\X\in \supp\omega$. Then for any $\lambda>0$
\[\left|\int_{\R^{d}}{\omega(\X)e(\lambda\varphi(\X))\D\X}\right|\leq c_{\ell}\lambda^{-\ell+1},\]
where the constant $c_{\ell}$ only depends on $\ell, d$, upper bounds for the absolute values of finitely many derivatives of $\omega$ and $\varphi$ on $U_{+}$ and a lower bound for $|\nabla\varphi|$ on $\supp\omega$.
\end{lem}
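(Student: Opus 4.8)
The plan is the classical integration-by-parts argument for non-stationary phase, organised so that the differentiability budget matches the hypotheses exactly. The starting observation is that on a neighbourhood of $\supp\omega$ the first-order differential operator
\[
L:=\frac{1}{2\pi\i\lambda}\sum_{j=1}^{d}\frac{\del_{j}\varphi}{\abs{\nabla\varphi}^{2}}\,\del_{j}
\]
is well defined, with coefficients $a_{j}:=\del_{j}\varphi/(2\pi\i\lambda\abs{\nabla\varphi}^{2})$ of class $\mathcal{C}^{\ell-1}$, since $\varphi\in\mathcal{C}^{\ell}(U_{+})$ and $\abs{\nabla\varphi}$ is bounded below on $\supp\omega$ by hypothesis. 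A direct computation using $\del_{j}e(\lambda\varphi)=2\pi\i\lambda(\del_{j}\varphi)e(\lambda\varphi)$ gives $L\bigl(e(\lambda\varphi)\bigr)=e(\lambda\varphi)$. Writing $L^{t}g=-\sum_{j}\del_{j}(a_{j}g)$ for the formal transpose and using that $\omega$ is compactly supported so that the boundary terms vanish, one obtains
\[
\int_{\R^{d}}\omega(\X)\,e(\lambda\varphi(\X))\,\D\X=\int_{\R^{d}}\bigl(L^{t}\omega\bigr)(\X)\,e(\lambda\varphi(\X))\,\D\X.
\]

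Next I would iterate this identity $\ell-1$ times to get $\int\omega\,e(\lambda\varphi)\,\D\X=\int\bigl((L^{t})^{\ell-1}\omega\bigr)e(\lambda\varphi)\,\D\X$. The point requiring care is the differentiability count: each application of $L^{t}$ consumes one derivative of the amplitude and one further derivative of the coefficients $a_{j}$, hence of $\varphi$, so that $(L^{t})^{\ell-1}\omega$ makes sense precisely when $\omega\in\mathcal{C}^{\ell-1}_{c}$ and $\varphi\in\mathcal{C}^{\ell}$. Moreover $(L^{t})^{\ell-1}\omega$ stays supported in $\supp\omega$ (each $L^{t}$ only multiplies by $a_{j}$ and its derivatives and differentiates $\omega$), and by the Leibniz and quotient rules it is a finite sum of terms, each a product of $\lambda^{-(\ell-1)}$, derivatives of $\omega$ of order at most $\ell-1$, derivatives of $\varphi$ of order at most $\ell$, and a bounded number of negative powers of $\abs{\nabla\varphi}$.

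Finally I would estimate trivially, since every integrand in sight is a compactly supported continuous function:
\[
\left|\int_{\R^{d}}\omega(\X)\,e(\lambda\varphi(\X))\,\D\X\right|\le\int_{\supp\omega}\bigl|(L^{t})^{\ell-1}\omega(\X)\bigr|\,\D\X\le c_{\ell}\,\lambda^{-\ell+1},
\]
where $c_{\ell}$ absorbs $\meas(U_{+})$ together with the finitely many sup-norms on $U_{+}$ of the derivatives of $\omega$ and $\varphi$ that occur, and the fixed lower bound for $\abs{\nabla\varphi}$ on $\supp\omega$. This is exactly the asserted dependence of the constant.

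\textbf{Main obstacle.} There is no genuine difficulty: the statement is standard (Hörmander, loc.\ cit.) and the argument is mechanical. The only thing to be careful about is the bookkeeping of the differentiability budget in the iteration and the verification that each integration by parts (and the differentiations under the integral sign) is legitimate, which is immediate from compact support and continuity at every stage. If one prefers to avoid the vector-field formalism, an equivalent route is to take a finite partition of unity on $\supp\omega$ subordinate to the cover by open sets on each of which some fixed $\abs{\del_{j}\varphi}$ is bounded away from zero, and then integrate by parts $\ell-1$ times in that single variable; this yields the same bound with the same dependence of the constant.
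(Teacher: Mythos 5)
Your proof is correct. The paper gives no proof of this lemma at all, referring instead to H\"ormander (Theorem 7.7.1); your integration-by-parts argument with the operator $L$ is precisely the standard proof of that cited result, and your bookkeeping is right: $(L^{t})^{\ell-1}\omega$ consumes exactly $\ell-1$ derivatives of $\omega$ and $\ell$ derivatives of $\varphi$, matching the hypotheses, and the lower bound for $|\nabla\varphi|$ on the compact set $\supp\omega$ (which follows from the nonvanishing hypothesis by compactness) keeps the coefficients of $L$ and all their derivatives under control on a neighbourhood of $\supp\omega$.
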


Recall given a symmetric matrix we define its signature to be the number of positive eigenvalues minus the number of negative eigenvalues.

\begin{lem}[stationary phase]\label{stat}
Let $\ell>\frac{d}{2}+4$ and $\mathscr{D}, \mathscr{D}_{+} \subseteq \R^{d}$ bounded open sets such that $\overline{\mathscr{D}}\subseteq \mathscr{D}_{+}$. Let $\omega\in \mathcal{C}_{c}^{\ell-1}(\R^{d})$ with $\supp\omega\subseteq \mathscr{D}$ and $\varphi\in \mathcal{C}^{\ell}(\mathscr{D})$. Suppose $\nabla\varphi(\v_{0})= \0$ and $H_{\varphi}(\v_{0})\neq 0$ for some $\v_{0}\in \mathscr{D}$. Let $\sigma$ be the signature of $H_{\varphi}(\v_{0})$ and $\Delta=|\det H_{\varphi}(\v_{0})|$. Suppose further that $\nabla \varphi(\X)\neq \0$ for all $\X\in \overline{\mathscr{D}}\backslash \{\v_{0}\}$. Then for any $\lambda>0$
\[\int_{\R^{d}}{\omega(\X)e(\lambda\varphi(\X))\D\X}= e\left(\lambda\varphi(\v_{0})+\frac{\sigma}{8}\right)\Delta^{-\frac{1}{2}}\lambda^{-\frac{d}{2}}(\omega(\v_{0})+\bigO{\lambda^{-1}}),\]
where the implicit constant only depends on $\ell, d$, upper bounds for the absolute values of finitely many derivatives of $\omega$ and $\varphi$ on $\mathscr{D}_{+}$, an upper bound for $|\X-\v_{0}|/|\nabla\varphi(\v_{0})|$ on $\mathscr{D}_{+}$, and a lower bound for $\Delta$.
\end{lem}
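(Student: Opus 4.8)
Since the statement is essentially \cite[Theorem 7.7.5]{Hor}, I will only indicate the standard argument. The plan is the classical three-step proof of stationary phase: localise near the critical point with Lemma~\ref{nonstat}, reduce the phase to a nondegenerate quadratic form, and evaluate the resulting Gaussian-type integral by Fourier inversion. Throughout, ``$H_{\varphi}(\v_{0})\neq 0$'' is read as $\det H_{\varphi}(\v_{0})\neq 0$, since $\Delta^{-1/2}$ must be meaningful, and this non-degeneracy is used at every stage.

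First I would localise. Pick $\chi\in\mathcal{C}_{c}^{\infty}(\R^{d})$ with $\chi\equiv 1$ near $\v_{0}$ and $\supp\chi$ contained in a small ball $B\subseteq\mathscr{D}$, and write $\omega=\chi\omega+(1-\chi)\omega$. On $\supp((1-\chi)\omega)$, a compact subset of $\overline{\mathscr{D}}\setminus\{\v_{0}\}$, the hypothesis that $\v_{0}$ is the only critical point of $\varphi$ in $\overline{\mathscr{D}}$ gives $\abs{\nabla\varphi}\geq c>0$ there, so Lemma~\ref{nonstat} bounds the corresponding integral by $\ll\lambda^{-\ell+1}\ll\lambda^{-d/2-1}$ since $\ell>d/2+4$; this is absorbed into the claimed error term. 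It remains to treat $\int\chi\omega\,e(\lambda\varphi)\,\D\X$, now supported near $\v_{0}$.

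Next I would reduce the phase to a quadratic form. By the \emph{Morse lemma}, after shrinking $\supp\chi$ if necessary there is a $\mathcal{C}^{\ell-2}$ diffeomorphism $\Psi$ from a neighbourhood of $\0$ onto a neighbourhood of $\v_{0}$ containing $B$, with $\Psi(\0)=\v_{0}$ and $\varphi(\Psi(\y))=\varphi(\v_{0})+\tfrac12\sum_{j=1}^{d}\epsilon_{j}y_{j}^{2}$, $\epsilon_{j}\in\{+1,-1\}$. Comparing Hessians at $\0$ and using $\nabla\varphi(\v_{0})=\0$ gives $D\Psi(\0)^{\mathrm{T}}H_{\varphi}(\v_{0})D\Psi(\0)=\operatorname{diag}(\epsilon_{1},\dots,\epsilon_{d})$, hence $\sum_{j}\epsilon_{j}=\sigma$ by Sylvester's law of inertia and $\abs{\det D\Psi(\0)}=\Delta^{-1/2}$. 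Substituting,
\[\int_{\R^{d}}\chi(\X)\omega(\X)e(\lambda\varphi(\X))\,\D\X=e(\lambda\varphi(\v_{0}))\int_{\R^{d}}a(\y)\,e\!\Big(\tfrac{\lambda}{2}\textstyle\sum_{j}\epsilon_{j}y_{j}^{2}\Big)\,\D\y,\]
with $a(\y)=\chi(\Psi(\y))\omega(\Psi(\y))\abs{\det D\Psi(\y)}$ compactly supported and $a(\0)=\omega(\v_{0})\Delta^{-1/2}$. Finally I would compute this model integral: the one-dimensional Fresnel identity $\int_{\R}e(\tfrac{\lambda}{2}\epsilon_{j}y^{2}-y\xi_{j})\,\D y=\lambda^{-1/2}e(\epsilon_{j}/8)\,e(-\xi_{j}^{2}/(2\lambda\epsilon_{j}))$, taken in product form, together with Plancherel's theorem yield
\[\int_{\R^{d}}a(\y)\,e\!\Big(\tfrac{\lambda}{2}\textstyle\sum_{j}\epsilon_{j}y_{j}^{2}\Big)\,\D\y=\lambda^{-d/2}e(\sigma/8)\int_{\R^{d}}\widehat a(\xi)\,e\!\Big(-\tfrac{1}{2\lambda}\textstyle\sum_{j}\epsilon_{j}^{-1}\xi_{j}^{2}\Big)\,\D\xi,\]
and Taylor-expanding the exponential as $1+\bigO{\lambda^{-1}\abs{\xi}^{2}}$, using Fourier inversion $\int\widehat a(\xi)\,\D\xi=a(\0)$ and a Cauchy--Schwarz/Sobolev bound for $\int\abs{\xi}^{2}\abs{\widehat a(\xi)}\,\D\xi$, makes this $\lambda^{-d/2}e(\sigma/8)(a(\0)+\bigO{\lambda^{-1}})$. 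Assembling the pieces gives $e(\lambda\varphi(\v_{0})+\sigma/8)\Delta^{-1/2}\lambda^{-d/2}(\omega(\v_{0})+\bigO{\lambda^{-1}})$, as asserted.

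The main obstacle is not conceptual but is the bookkeeping of derivatives and the uniformity of the implied constants. The Morse substitution costs two derivatives and its Jacobian one more, while the $\bigO{\lambda^{-1}}$ remainder in the last step needs $\widehat a$ to decay fast enough, i.e.\ $a$ to have a bit more than $d/2$ derivatives; making these two budgets compatible with the stated hypothesis $\ell>d/2+4$, and checking that every implied constant depends only on the data listed (derivatives of $\omega,\varphi$ on $\mathscr{D}_{+}$, the bound for $\abs{\X-\v_{0}}/\abs{\nabla\varphi(\v_{0})}$, and a lower bound for $\Delta$), is exactly the careful analysis of \cite[Theorem 7.7.5]{Hor}, which is why in practice one simply invokes that reference.
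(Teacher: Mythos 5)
The paper does not prove this lemma at all: it is quoted as a known result, with the proof deferred entirely to H\"ormander \cite[Theorem 7.7.5]{Hor} and the remark on p.~222 there for the derivative count $\ell>\frac{d}{2}+4$. Your sketch is therefore doing more than the paper does, and as an outline it is correct; but note that it follows the Morse-lemma route (localise, straighten the phase to $\varphi(\v_{0})+\frac{1}{2}\sum\epsilon_{j}y_{j}^{2}$, evaluate the Fresnel integral via Plancherel), whereas H\"ormander's actual proof deliberately avoids the Morse lemma and instead compares $e(\lambda\varphi)$ directly with the quadratic approximation $e(\lambda(\varphi(\v_{0})+\frac{1}{2}\langle H_{\varphi}(\v_{0})(\X-\v_{0}),\X-\v_{0}\rangle))$ using his Theorem 7.6.1 and an interpolation in the cubic remainder. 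The difference matters precisely for the point you flag at the end: the $\mathcal{C}^{\ell}$ Morse change of variables is only $\mathcal{C}^{\ell-2}$ and its Jacobian $\mathcal{C}^{\ell-3}$, so your amplitude $a$ has $\ell-3$ derivatives, and requiring $a\in H^{s}$ with $s>\frac{d}{2}+2$ for the $\bigO{\lambda^{-1}}$ remainder forces $\ell>\frac{d}{2}+5$ --- one derivative worse than the stated hypothesis. There is also the uniformity issue that the size of the Morse neighbourhood and the bounds on $\Psi$ depend on $\varphi$ and must be controlled by the listed data (in particular the lower bound for $\Delta$ and the bound for $\abs{\X-\v_{0}}/\abs{\nabla\varphi(\X)}$, which in the statement should be read with $\nabla\varphi(\X)$ rather than $\nabla\varphi(\v_{0})$ in the denominator). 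Both points are resolved by H\"ormander's argument, which is why the paper simply cites it; since you explicitly defer the bookkeeping to that reference, your proposal is acceptable, but one should be aware that the Morse-lemma shortcut, taken literally, does not reach the stated threshold on $\ell$.
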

Note that this is a simplified version of \cite[Theorem 7.7.5]{Hor}, where the assumption on $\ell$ can be deduced from \cite[pp. 222, Remark]{Hor}.\\ \
\\ We consider two compactness resulst. Let $m\in \N_{0}$ and $\mathcal{G}=\mathcal{G}_{1}\times \mathcal{G}_{2}\subseteq \R^{n+m}$, where $\mathcal{G}_{1}\subseteq \R^{n}$ and $\mathcal{G}_{2}\subseteq \R^{m}$ are bounded connected open sets. Let $\X_{0}$ be a fixed point in $\mathcal{G}_{1}$.

\begin{lem}{}\label{compact1}
Let $G\in \mathcal{C}^{\ell}(\mathcal{G})$, $\ell\geq 2$ and assume $H_{G_{\t}}(\X)\neq 0$ for every $\t\in \mathcal{G}_{2}$, where $G_{\t}$ is the real-valued map on $\mathcal{G}_{1}$ given by $\X\mapsto G(\X, \t)$. Let $\mathcal{F}_{2}$ be a compact set contained in $\mathcal{G}_{2}$, then there exist a real number $\tau>0$ and constants $c_{1}, c_{2}>0$ such that
\[c_{1}\leq |\det H_{G_{\t}}(\X)|\leq c_{2}\]
for all $\X\in B_{\tau}(\X_{0})$ and $\t\in \mathcal{F}_{2}$. Moreover the map $\X\mapsto \nabla G_{\t}(\X)$ is a $\mathcal{C}^{\ell-1}$-diffeomorphism on $\overline{B_{\tau}(\X_{0})}$ for all $\t\in \mathcal{F}_{2}$.
\end{lem}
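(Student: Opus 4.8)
The plan is to derive the two-sided bound on $|\det H_{G_{\t}}(\X)|$ from a continuity-and-compactness argument, and to obtain the diffeomorphism statement from the inverse function theorem together with a $\t$-uniform quantitative injectivity estimate. To begin, since $G\in\mathcal{C}^{\ell}(\mathcal{G})$ with $\ell\geq 2$, every second partial $\partial^{2}G/\partial x_{\mu}\partial x_{\nu}$ is continuous on $\mathcal{G}$, so the map $(\X,\t)\mapsto\det H_{G_{\t}}(\X)$ is continuous on $\mathcal{G}$ (it is a polynomial in these entries), as is $(\X,\t)\mapsto H_{G_{\t}}(\X)$ into the space of $n\times n$ matrices. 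Since $\mathcal{G}_{1}$ is open and $\X_{0}\in\mathcal{G}_{1}$, fix $\tau_{0}>0$ with $\overline{B_{\tau_{0}}(\X_{0})}\subseteq\mathcal{G}_{1}$; then $\overline{B_{\tau_{0}}(\X_{0})}\times\mathcal{F}_{2}$ is a compact subset of $\mathcal{G}$, as $\mathcal{F}_{2}\subseteq\mathcal{G}_{2}$ is compact.

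For the determinant bounds, note that by hypothesis $\det H_{G_{\t}}(\X_{0})\neq 0$ for every $\t\in\mathcal{G}_{2}\supseteq\mathcal{F}_{2}$, so $\t\mapsto|\det H_{G_{\t}}(\X_{0})|$ is a positive continuous function on the compact set $\mathcal{F}_{2}$, hence bounded below by some $2c_{1}>0$. If no $\tau\in(0,\tau_{0}]$ had the property that $|\det H_{G_{\t}}(\X)|\geq c_{1}$ throughout $\overline{B_{\tau}(\X_{0})}\times\mathcal{F}_{2}$, one could choose $(\X_{k},\t_{k})\in\overline{B_{1/k}(\X_{0})}\times\mathcal{F}_{2}$ with $|\det H_{G_{\t_{k}}}(\X_{k})|<c_{1}$; passing to a subsequence along which $\t_{k}\to\t_{\ast}\in\mathcal{F}_{2}$ and using $\X_{k}\to\X_{0}$ together with continuity would give $|\det H_{G_{\t_{\ast}}}(\X_{0})|\leq c_{1}<2c_{1}$, a contradiction. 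Fixing such a $\tau$, set $c_{2}:=\max_{\overline{B_{\tau}(\X_{0})}\times\mathcal{F}_{2}}|\det H_{G_{\t}}(\X)|<\infty$; then $c_{1}\leq|\det H_{G_{\t}}(\X)|\leq c_{2}$ on $B_{\tau}(\X_{0})\times\mathcal{F}_{2}$, which is the asserted bound.

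For the diffeomorphism claim, the map $\Phi_{\t}\colon\X\mapsto\nabla G_{\t}(\X)$ is $\mathcal{C}^{\ell-1}$ on $\mathcal{G}_{1}$ with Jacobian matrix $H_{G_{\t}}(\X)$, which is invertible throughout $\overline{B_{\tau}(\X_{0})}$ for each $\t\in\mathcal{F}_{2}$ by the previous step; so by the inverse function theorem $\Phi_{\t}$ is a local $\mathcal{C}^{\ell-1}$-diffeomorphism near each point of $\overline{B_{\tau}(\X_{0})}$, and it only remains to establish injectivity on $\overline{B_{\tau}(\X_{0})}$ after possibly shrinking $\tau$. Since $K:=\{H_{G_{\t}}(\X_{0}):\t\in\mathcal{F}_{2}\}$ is a compact subset of the open set $\GL_{n}(\R)$, there is $\rho>0$ such that every $n\times n$ matrix within distance $\rho$ of $K$ is invertible; using uniform continuity of $(\X,\t)\mapsto H_{G_{\t}}(\X)$ on $\overline{B_{\tau_{0}}(\X_{0})}\times\mathcal{F}_{2}$, shrink $\tau$ so that $\norm{H_{G_{\t}}(\X)-H_{G_{\t}}(\X_{0})}<\rho$ for all $(\X,\t)\in\overline{B_{\tau}(\X_{0})}\times\mathcal{F}_{2}$. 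As $\overline{B_{\tau}(\X_{0})}$ is a product of intervals, hence convex, the fundamental theorem of calculus gives, for $\X,\y\in\overline{B_{\tau}(\X_{0})}$,
\[\Phi_{\t}(\X)-\Phi_{\t}(\y)=\left(\int_{0}^{1}H_{G_{\t}}\bigl(\y+s(\X-\y)\bigr)\,\D s\right)(\X-\y),\]
and the averaged matrix lies within $\rho$ of $H_{G_{\t}}(\X_{0})$ by the triangle inequality, since $\y+s(\X-\y)\in\overline{B_{\tau}(\X_{0})}$ for $s\in[0,1]$; hence it is invertible, so $\Phi_{\t}(\X)=\Phi_{\t}(\y)$ forces $\X=\y$. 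Therefore $\Phi_{\t}$ is an injective local $\mathcal{C}^{\ell-1}$-diffeomorphism on $\overline{B_{\tau}(\X_{0})}$, and thus a $\mathcal{C}^{\ell-1}$-diffeomorphism onto its image, with all constants uniform in $\t\in\mathcal{F}_{2}$.

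I expect the main obstacle to be the \emph{uniform} injectivity of $\nabla G_{\t}$ on the cube: the inverse function theorem only yields local invertibility, so one must combine compactness of $\mathcal{F}_{2}$ with a $\t$-uniform bound on the oscillation of the Hessian over $\overline{B_{\tau}(\X_{0})}$ to upgrade this to genuine injectivity there; the determinant bounds, by contrast, are a routine continuity-and-compactness matter.
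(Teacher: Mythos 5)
Your proof is correct. The paper does not prove this lemma itself but defers to \cite[Lemma 3.4]{Dam}, and the argument there is essentially the one you give: joint continuity of $(\X,\t)\mapsto\det H_{G_{\t}}(\X)$ together with compactness of $\overline{B_{\tau_{0}}(\X_{0})}\times\mathcal{F}_{2}$ yields the two-sided determinant bound, and uniform injectivity follows from writing $\nabla G_{\t}(\X)-\nabla G_{\t}(\y)$ via the integral mean-value formula and noting that the averaged Hessian stays $\rho$-close to the compact family $\{H_{G_{\t}}(\X_{0}):\t\in\mathcal{F}_{2}\}\subseteq\GL_{n}(\R)$ after shrinking $\tau$.
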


\begin{lem}\label{compact2}
Let $G\in \mathcal{C}^{\ell}(\mathcal{G})$, $\ell\geq 2$  and assume $H_{G_{\t}}(\X)\neq 0$ for every $\t\in \mathcal{G}_{2}$. Let $\mathcal{F}_{2}$ and $\tau$ be as in Lemma \ref{compact1}. Then for any $0<\kappa<\tau$ sufficiently small, there exists $\rho>0$ such that
\[\dist(\del (\nabla G_{\t}(B_{\tau}(\X_{0}))), \del (\nabla G_{\t}(B_{\kappa}(\X_{0}))))\geq 2\rho\]
for all $\t\in \mathcal{F}_{2}$.
\end{lem}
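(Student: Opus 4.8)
The plan is to deduce this from Lemma \ref{compact1} together with a compactness/continuity argument. By Lemma \ref{compact1} we already know that for each $\t\in\mathcal{F}_2$ the map $\Phi_\t:=\nabla G_\t$ is a $\mathcal{C}^{\ell-1}$-diffeomorphism on $\overline{B_\tau(\X_0)}$, and that $|\det H_{G_\t}|$ is bounded between two positive constants $c_1,c_2$ uniformly in $\X\in B_\tau(\X_0)$ and $\t\in\mathcal{F}_2$. In particular $\Phi_\t$ maps the nested open boxes $B_\kappa(\X_0)\subseteq B_\tau(\X_0)$ to nested open sets $\Phi_\t(B_\kappa(\X_0))\subseteq\Phi_\t(B_\tau(\X_0))$, and since $\Phi_\t$ is a homeomorphism onto its image it sends boundary to boundary: $\del(\Phi_\t(B_r(\X_0)))=\Phi_\t(\del B_r(\X_0))$ for $r\in\{\kappa,\tau\}$. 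Thus the quantity to be bounded below is $\dist(\Phi_\t(\del B_\tau(\X_0)),\Phi_\t(\del B_\kappa(\X_0)))$, a distance between the images of two disjoint compact sets under an injective continuous map.

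First I would fix a single $\t_0\in\mathcal{F}_2$ and choose $\kappa<\tau$. Since $\del B_\tau(\X_0)$ and $\del B_\kappa(\X_0)$ are disjoint compact sets and $\Phi_{\t_0}$ is injective and continuous on the compact set $\overline{B_\tau(\X_0)}$, their images are disjoint compact sets, so $\dist(\Phi_{\t_0}(\del B_\tau(\X_0)),\Phi_{\t_0}(\del B_\kappa(\X_0)))>0$; call a suitable lower bound $4\rho_0$. Next I would upgrade this to hold uniformly in $\t$. The key point is that $(\X,\t)\mapsto\nabla G_\t(\X)=\nabla_\X G(\X,\t)$ is continuous (indeed $\mathcal{C}^{\ell-1}$) on the compact set $\overline{B_\tau(\X_0)}\times\mathcal{F}_2$, hence uniformly continuous there. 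Therefore the function
\[
(\t)\longmapsto \dist\bigl(\Phi_\t(\del B_\tau(\X_0)),\,\Phi_\t(\del B_\kappa(\X_0))\bigr)
\]
is continuous on the compact set $\mathcal{F}_2$: a standard estimate shows that perturbing $\t$ by a small amount perturbs each of the two image sets (in the Hausdorff metric) by a small amount, hence perturbs their distance by a small amount. A continuous positive function on a compact set attains a positive minimum, so there is $\rho>0$ with $\dist(\Phi_\t(\del B_\tau(\X_0)),\Phi_\t(\del B_\kappa(\X_0)))\geq 2\rho$ for all $\t\in\mathcal{F}_2$, as desired. The role of "$\kappa$ sufficiently small" is only to guarantee $\kappa<\tau$ so that the two boundaries are genuinely disjoint and both lie in the diffeomorphism domain $\overline{B_\tau(\X_0)}$; any $0<\kappa<\tau$ works.

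I expect the only real subtlety — and hence the main obstacle to write carefully — to be the continuity of the set-valued map $\t\mapsto\Phi_\t(\del B_r(\X_0))$ and the resulting continuity of the distance function in $\t$; once that is in place the conclusion is immediate from compactness of $\mathcal{F}_2$. To make this rigorous I would avoid invoking the Hausdorff metric abstractly and instead argue directly: given $\t,\t'$, for any $\X\in\del B_\tau(\X_0)$ and $\Y\in\del B_\kappa(\X_0)$ we have
\[
|\Phi_\t(\X)-\Phi_\t(\Y)|\leq |\Phi_{\t'}(\X)-\Phi_{\t'}(\Y)| + |\Phi_\t(\X)-\Phi_{\t'}(\X)| + |\Phi_\t(\Y)-\Phi_{\t'}(\Y)|,
\]
and the last two terms are $\leq\omega_{\Phi}(|\t-\t'|)$, where $\omega_\Phi$ is the modulus of continuity of $\Phi$ on the compact set $\overline{B_\tau(\X_0)}\times\mathcal{F}_2$; taking infima over $\X,\Y$ shows the distance function is uniformly continuous in $\t$. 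Alternatively, one could sidestep set-valued continuity entirely by using that $\Phi_\t^{-1}$ is Lipschitz with a constant uniform in $\t$ (which follows from the uniform lower bound $c_1$ on $|\det H_{G_\t}|$ together with uniform bounds on the entries of $H_{G_\t}$, giving a uniform upper bound on $\|H_{G_\t}^{-1}\|=\|H_{G_\t^\ast}\|$ via \eqref{legendrematrix}): if $\X\in\del B_\tau(\X_0)$ and $\Y\in\del B_\kappa(\X_0)$ then $|\X-\Y|\geq\tau-\kappa$, so $|\Phi_\t(\X)-\Phi_\t(\Y)|\geq(\tau-\kappa)/L$ for a uniform Lipschitz constant $L$ of $\Phi_\t^{-1}$, which directly yields $\rho=(\tau-\kappa)/(2L)$ and in fact removes the need for any continuity-in-$\t$ argument at all. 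I would present this second route as the clean proof.
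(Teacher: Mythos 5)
Your first route is correct and is, in substance, the argument the paper relies on: the paper does not prove this lemma itself but defers to \cite[Lemma 3.5]{Dam}, and the proof there is exactly the compactness/continuity argument you describe (fix $\t$, use injectivity of $\nabla G_{\t}$ on $\overline{B_{\tau}(\X_{0})}$ to get a positive distance between the two disjoint compact boundary images, then use uniform continuity of $(\X,\t)\mapsto\nabla G_{\t}(\X)$ on $\overline{B_{\tau}(\X_{0})}\times\mathcal{F}_{2}$ and compactness of $\mathcal{F}_{2}$ to make the bound uniform). Your identification $\del(\nabla G_{\t}(B_{r}(\X_{0})))=\nabla G_{\t}(\del B_{r}(\X_{0}))$ and the three-term continuity estimate for $\t\mapsto\dist(\cdot,\cdot)$ are both fine as set up.

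The second route, which you say you would present as the actual proof, has a gap. From $c_{1}\leq|\det H_{G_{\t}}|$ and upper bounds on the entries of $H_{G_{\t}}$ you do get a uniform pointwise bound on the operator norm of $H_{G_{\t}}(\X)^{-1}$, i.e.\ on the derivative of $(\nabla G_{\t})^{-1}$ at each point of the image. But that does not make $(\nabla G_{\t})^{-1}$ Lipschitz on $\nabla G_{\t}(B_{\tau}(\X_{0}))$: the image of the box need not be convex, so the segment joining $\nabla G_{\t}(\X)$ to $\nabla G_{\t}(\z)$ may leave the image and the mean value inequality does not apply. Equivalently, an injective map of a convex set with everywhere nondegenerate, uniformly bounded-inverse derivative need not satisfy a global lower bound $|\Phi(\X)-\Phi(\z)|\gg|\X-\z|$ (bend a long thin box into a horseshoe: the two ends are far apart in the domain but land close together in the image). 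The paper itself is careful about exactly this point in the proof of Lemma \ref{nonstatgradientbound}, where the bound $|\hat{\X}-\hat{\z}|\leq C|\nabla\hat{F}_{y,\J}(\hat{\X})-\nabla\hat{F}_{y,\J}(\hat{\z})|$ is only asserted for $|\hat{\X}-\hat{\z}|\leq\eta$. To rescue route two you must either (a) use that $\tau$ from Lemma \ref{compact1} is small enough that $H_{G_{\t}}(\X)$ stays uniformly close to $H_{G_{\t}}(\X_{0})$ on the convex set $\overline{B_{\tau}(\X_{0})}$, so that $\nabla G_{\t}$ is a small perturbation of a fixed invertible linear map and the lower Lipschitz bound follows from the mean value inequality applied to the forward map; or (b) combine the local lower bound with injectivity and a compactness argument over pairs with $|\X-\z|\geq\eta$ and over $\t\in\mathcal{F}_{2}$ --- which is route one again. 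So route one should be the proof you write; route two as stated is not yet sound.
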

For proofs of \ref{compact1} and \ref{compact2} we refer the reader to \cite[Lemmas 3.4 and 3.5]{Dam}.

\section{Setting up the proof of \ref{mainthm}}\label{setup}
By virtue of the characteristic functions
\begin{equation}\label{indi}
\chi_{\delta}(\theta)=\left\{ \begin{array}{*{2}{c}}1& \mbox{if $||\theta||\leq \delta$},\\ 0& \mbox{else.}\end{array}\right.
\end{equation}
for $0< \delta \leq 1/2$ we can rewrite
\[N_{\omega}(Q, \delta)=\sum_{\substack {\textbf{a}\in \Z^{n}\\ q\leq Q}}{\omega\left(\frac{\textbf a}{q}\right)}\prod_{r=1}^{R}{\chi_{\delta}\left(qf_{r}\left(\frac{\textbf {a}}{q}\right)\right)}.\]
Consider the Selberg magic functions as described in \cite{Mont}
for the interval $[-\delta, \delta]\subseteq \R/\Z$ and a parameter $J\in \N$
\[S_{J}^{\pm}(x)=\sum_{|j|\leq J}{\hat{S}_{J}^{\pm}(j)e(jx)}.\]
They obey the properties
\[S^{-}_{J}(y)\leq \chi_{\delta}(y)\leq S^{+}_{J}(y)\]
and
\[\hat{S}^{\pm}_{J}(0)=2\delta\pm \frac{1}{J+1}\]
and are bounded by
\begin{equation}\label{magicbound}
|\hat{S}^{\pm}_{J}(j)|\leq \frac{1}{J+1}+\min\left(2\delta, \frac{1}{\pi|j|}\right)
\end{equation}
for all $y\in \R/\Z$ and $0\leq |j|\leq J$. Hence we can bound the characteristic functions from above by the Selberg magic functions and obtain
\begin{align*}
N_{\omega}(Q, \delta)&\leq \sum_{\substack{\a\in \Z^{n}\\ q\leq Q}}{\omega\left(\frac{\a}{q}\right)\prod_{r=1}^{R}{S^{+}_{J}\left(qf_{r}\left(\frac{\a}{q}\right)\right)}}\\
&= \sum_{\substack{\a\in \Z^{n}\\ q\leq Q}}{\omega\left(\frac{\a}{q}\right)\prod_{r=1}^{R}{\left(\sum_{j_{r}=-J}^{J}{\hat{S}^{+}_{J}(j_{r})e\left(j_{r}qf_{r}\left(\frac{\a}{q}\right)\right)}\right)}}\\
&= \sum_{\substack{\a\in \Z^{n}\\ q\leq Q}}{\omega\left(\frac{\a}{q}\right)\sum_{\substack{0\leq |j_{i}|\leq J\\ 1\leq i\leq R}}{\left(\prod_{r=1}^{R}{\hat{S}^{+}_{J}(j_{r})}\right)e\left(\sum_{r=1}^{R}{j_{r}qf_{r}\left(\frac{\a}{q}\right)}\right)}}.
\end{align*}
The terms with $j_{i}=0$ for all $i=1,..., R$ contribute
\[\left(2\delta+\frac{1}{J+1}\right)^{R}N_{0}=(2\delta)^{R}N_{0}+\bigO{\delta^{R-1}\frac{Q^{n+1}}{J}+\frac{Q^{n+1}}{J^{R}}},\]
with the implicit constant possibly depending on $R$ and an upper bound for the diameter of $\supp w$. Bounding the characteristic function from below by the Selberg magic functions yields a similar result, such that we conclude
\begin{align}\label{maineq}
|N_{\omega}(Q, \delta)-(2\delta)^{R}N_{0}| &\ll \delta^{R-1}\frac{Q^{n+1}}{J}+\frac{Q^{n+1}}{J^{R}}\\
&\quad +\sum_{\substack{1\leq |j_{i}|\leq J\\ 1\leq i\leq R\\ \J\neq \textbf{0}}}{\left(\prod_{r=1}^{R}{b_{j_{r}}}\right)\left|\sum_{\substack{\a\in \Z^{n}\\ q\leq Q}}{\omega\left(\frac{\a}{q}\right)e\left(\sum_{r=1}^{R}{j_{r}qf_{r}\left(\frac{\a}{q}\right)}\right)}\right|},\nonumber
\end{align}
where 
\[b_{j_{r}}:=\frac{1}{J+1}+\min\left(2\delta, \frac{1}{\pi|j_{r}|}\right)\] 
is the bound for the Selberg magic functions given in (\ref{magicbound}).
Via Poisson summation formula we can rewrite
\begin{align}
&\sum_{\a\in \Z^{n}}{\omega\left(\frac{\a}{q}\right)e\left(\sum_{r=1}^{R}{j_{r}qf_{r}\left(\frac{\a}{q}\right)}\right)}\\
&=\sum_{\K\in \Z^{n}}{\int_{\R^{n}}{\omega\left(\frac{\X}{q}\right)e\left(\sum_{r=1}^{R}{j_{r}qf_{r}\left(\frac{\X}{q}\right)-\K\cdot\X}\right)\D\X}}\nonumber\\
&=q^{n}\sum_{\K\in \Z^{n}}{I(q; \J; \K)}\nonumber
\end{align}
with
\[I(q; \J; \K)=\int_{\R^{n}}{\omega(\X)e\left(\sum_{r=1}^{R}{qj_{r}f_{r}(\X)-q\K\cdot \X}\right)\D\X}.\]
We now make use of \ref{newcurvcon}. By asssumption the Hessian matrix $H_{t_{1}f_{1}+\cdots +t_{r}f{r}}(\X_{0})$ has a non-vanishing minor of size $n-1$, i.e. we can find $1\leq i, j\leq n$ such that after deleting the $i$-th row and the $j$-th column of $H_{t_{1}f_{1}+\cdots + t_{t}f_{r}}(\X_{0})$, the resulting matrix is invertible. Since the Hessian matrix is symmetric, we necessarily have $i=j$ in this case. Consider the functions
\[\vartheta_{i}\colon \R^{R}\backslash\{0\}\rightarrow \R, \t\mapsto \det \left(\frac{\del^{2}t_{1}f_{1}+\cdots +t_{R}f_{R}}{\del x_{\nu}\del x_{\mu}}\right)_{\substack{1\leq \nu, \mu\leq n\\ \nu,\mu\neq i}}(\X_{0})\]
for $1\leq i\leq n$. Given \ref{newcurvcon} the preimages $\{\vartheta^{-1}_{i}(\R\backslash\{0\})\}_{1\leq i\leq n}$ form an open cover of $\R^{R}\setminus\{0\}$. For any fixed $\t$ the rank condition is invariant under scaling with a linear factor $a\in \R$., i.e. if $\t$ belongs to $\vartheta^{-1}_{i}(\R\setminus\{0\})$ so does $a\t=(at_{1}, ..., at_{R})$. Therefore we can assume $\t$ to be normalized in the sense $|\t|=1$. For every $1\leq i\leq n$ let $\tilde{T}_{i}=\vartheta_{i}^{-1}(\R\backslash\{0\})\cap B_{1}^{R}(\0) $, then $\{\tilde{T}_{1}, ..., \tilde{T}_{n}\}$ is an open cover of $B_{1}^{R}(\0)$. Since $B_{1}^{R}(\0)$ is compact and Hausdroff, it is also normal, hence the open cover $\{\tilde{T}_{1}, ..., \tilde{T}_{n}\}$ admits a shrinkage. That is an open cover $\{T_{1}', ..., T_{n}'\}$ such that $T_{i}:=\overline{T_{i}'}\subseteq \tilde{T}_{i}$ for $1\leq i \leq n$.\\ \
\\
To find a bound for the last term in \ref{maineq} it suffices to find an upper bound for
\begin{equation}\label{NrQd}
N^{(r; \bepsilon; \nu)}(Q, \delta)=\sum_{\substack{1\leq j_{r}\leq J\\ 0\leq j_{s}\leq j_{r}\\ (\J/j_{r})\in T_{\nu}}}{\left(\prod_{r=1}^{R}{b_{j_{r}}}\right)\left|\sum_{q\leq Q}{q^{n}}\sum_{\textbf{k}\in \Z^{n}}{I(q; (\epsilon_{1}j_{1}, ..., \epsilon_{R}j_{R}); \K)}\right|}
\end{equation}
for each $1\leq r\leq R$, $\bepsilon\in \{-1, 1\}^{R}$ and $1\leq \nu\leq n$. The arguments turn out to be identical for all $(r; \bepsilon; \nu)$, since different choices of $r$ or $\epsilon$ admit only to relabeling and the choice of $\nu$ is merely an exercise in notation. Therefore we only present the details for $N^{(1; (1, ..., 1); n)}(Q, \delta)$. Note that the same upper bound in fact holds for all $N^{(r; \bepsilon; n)}$.\\ \
\\
To $\X=(x_{1}, ..., x_{n})\in \R^{n}$ let $\hat{\X}=(x_{1}, ..., x_{n-1})\in \R^{n-1}$ and define functions
\[\hat{f}_{j, y}\colon \R^{n-1}\rightarrow \R, \hat{\X}\mapsto f_{j}(\hat{\X}, y)\]
for $y$ in a sufficiently small neighborhood $U(x_{0, n})$ of $x_{0, n}$. Define further
\[\hat{G}_{y}(\hat{\X}, \t)=\hat{f}_{1, y}(\hat{\X})+\sum_{r=2}^{R}{t_{r}\hat{f}_{r, y}(\hat{\X})}\]
and consider the continuous function
\[\psi\colon U(x_{0, n})\rightarrow \R, y\mapsto \det \left(\frac{\del^{2}(f_{1}+t_{2}f_{2+}\cdots +t_{R}f_{R})}{\del x_{\nu}\del x_{\mu}}\right)_{1\leq \nu, \mu\leq n-1}(\hat{\X}, y).\]
For a suitable $\varepsilon'>0$ we have $\psi(y)\neq 0$ for $y\in B_{\varepsilon}(x_{0, n})$, since $\psi(x_{0, n})\neq 0$ by construction. Take $0<\varepsilon_{1}<\varepsilon'$ sufficiently small, then on the compact set $\overline{\mathscr{Y}}$ with $\mathscr{Y}=B_{\varepsilon_{1}}(x_{0, n})$ we have
\[c'_{1}\leq \left|\det \left(\frac{\del^{2}(f_{1}+t_{2}f_{2}+\cdots +t_{R}f_{R})}{\del x_{\nu}\del x_{\mu}}\right)_{1\leq \nu, \mu\leq n-1}(\hat{\X}_{0}, y)\right|\leq c'_{2}\]
with constants $0<c'_{1}, c'_{2}$ for all $\t\in T_{n}$.
Now $\hat{G}_{y}$ satisfies the conditions of lemmas \ref{compact1} and $\ref{compact2}$ for $\mathcal{F}_{2}=\overline{\mathscr{Y}}\times T_{n}$, i.e. there are constants $\tau_{(1; (1, ..., 1); n)}>0$ and $c_{1}, c_{2}>0$ such that
\begin{equation} \label{determinantbound}
c_{1}\leq\left|\det \left(\frac{\del^{2}(f_{1}+t_{2}f_{2}\cdots +t_{R}f_{R})}{\del x_{\nu}\del x_{\mu}}\right)_{1\leq \nu, \mu\leq n-1}(\hat{\X}, y)\right|\leq c_{2}
\end{equation}
for all $\textbf{t}\in T_{n}$, $y\in \overline{\mathscr{Y}}$ and $\X\in B_{2\tau_{(1; (1, ..., 1); n)}}(\hat{\X}_{0})$. Moreover, the map
\[\hat{\X}\mapsto \left(\hat{f}_{1, y}+\sum_{r=2}^{R}{t_{r}\hat{f}_{r, y}}\right)(\hat{\X})\]
is a $\mathcal{C}^{\ell-1}$ diffeomorphism on $\overline{B_{2\tau_{(1; (1,..., 1); n)}}(\hat{\X}_{0})}$ for all $\t\in T_{n}$ and $y\in \overline{\mathscr{Y}}$. Define $\tau_{(r; \epsilon; \nu)}$ in the same way for $1\leq r\leq R$, $\epsilon\in \{\pm 1\}^{R}$ and $1\leq \nu \leq n$ and let
\[0\leq \tau\leq \min_{\substack{1\leq r\leq R\\ \epsilon\in \{\pm 1\}^{R}\\ 1\leq \nu \leq n}}\tau_{(r; \epsilon; \nu)}\]
be sufficiently small such that \ref{nonstatgradientbound} is applicable. For this choice of $\tau$ with \ref{compact2} we find constants $0<\kappa <\tau$ and $\rho$ such that
\begin{equation}\label{rho}
\dist\left(\del\left(\nabla\left(\hat{f}_{1, y}+\sum_{r=2}^{R}{t_{r}\hat{f}_{r, y}}\right)(B_{\tau}(\hat{\X}_{0}))\right), \del\left(\nabla\left(\hat{f}_{1, y}+\sum_{r=2}^{R}{t_{r}\hat{f}_{r, y}}\right)(B_{\kappa}(\hat{\X}_{0}))\right)\right)\geq 2\rho
\end{equation}
for all $\t\in T_{n}$ and $y\in \overline{\mathscr{Y}}$. Note that $\varepsilon_{0}<2\tau$ is a sufficient choice in \ref{mainthm}.

Let $\mathscr{D}=B_{\tau}(\hat{\X}_{0})$ and let $\omega\in \mathcal{C}_{c}^{\infty}(\R^{n})$ be a non-negative weight function such that for any $y\in \mathscr{Y}$ the closure of
\[U_{y}:=\{\X\in \R^{n-1}\mid \omega(\X, y)\neq 0\}\]
is contained in $B_{\kappa}(\hat{\X}_{0})$. Define $\hat{F}_{y, \textbf{j}}=\hat{f}_{1, y}+(j_{2}/j_{1})\hat{f}_{2, y}+\cdots + (j_{R}/j_{1})\hat{f}_{R, y}$ and $V_{y, \textbf{j}}=\nabla \hat{F}_{y, \textbf{j}}(U_{y})$. 
Since $0\leq j_{r}/j_{1}\leq 1$ for $2\leq r\leq R$ we know that $\nabla \hat{F}_{y, \textbf{j}}$ is a diffeomorphism on $U_{y}$ and $\mathscr{D}$.

\begin{lem}\label{boundedfy}
The functions $\hat{f}_{r, y}$ for $1\leq r\leq R$ are bounded on $\overline{B_{2\tau}(\hat{\X}_{0})}$ for all $y\in \overline{\mathscr{Y}}$ and the bounds are independend of $y$. Additionally, there is $L\in \N$ such that for all $\t\in T_{n}$ and all $i_{1}, ..., i_{n-1}\in \Z_{\geq 0}$with $\sum_{\mu=1}^{n}{i_{\mu}}\leq \ell$ we have
\[\left|\frac{\del^{i_{1}+\cdots + i_{n-1}}t_{1}f_{1}+\cdots t_{R}f_{R}}{\del^{i_{1}}_{x_{1}}\cdots \del^{i_{n-1}}_{x_{n-1}}}(\X)\right|\leq L\]
on $\overline{B_{2\tau}(\hat{\X}_{0})}\times\overline{\mathscr{Y}}$ and $\rho\leq L$ for $\rho$ in (\ref{rho}).
\end{lem}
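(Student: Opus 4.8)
The plan is to obtain all three assertions from the single fact that each $f_r$ is $\mathcal{C}^{\ell}$ on a neighbourhood of the \emph{compact} set $\overline{B_{2\tau}(\hat{\X}_0)} \times \overline{\mathscr{Y}}$, together with the compactness of the parameter set $T_n$. First I would fix a common compact neighbourhood on which all the $f_r$ and their partial derivatives up to order $\ell$ are defined and continuous — this is possible because $\tau \leq \tau_{(1;(1,\dots,1);n)}$ was chosen so that \eqref{determinantbound} holds on $B_{2\tau}(\hat{\X}_0)$, which in particular requires the $f_r$ to be $\mathcal{C}^\ell$ there. A continuous function on a compact set is bounded, so for each $r$ the function $\hat{f}_{r,y}(\hat{\X}) = f_r(\hat{\X}, y)$ is bounded on $\overline{B_{2\tau}(\hat{\X}_0)} \times \overline{\mathscr{Y}}$ by some constant $M_r$; since $M_r$ does not involve $y$, this gives the first claim with a $y$-independent bound.

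For the derivative bound, I would apply the same reasoning to the finitely many partial derivatives $\frac{\partial^{i_1 + \cdots + i_{n-1}} f_r}{\partial_{x_1}^{i_1} \cdots \partial_{x_{n-1}}^{i_{n-1}}}$ with $i_1 + \cdots + i_{n-1} \leq \ell$ (and $1 \leq r \leq R$): each is continuous on the compact set $\overline{B_{2\tau}(\hat{\X}_0)} \times \overline{\mathscr{Y}}$, hence bounded, say by $L_r$. Then for $\t \in T_n$ — which is a subset of $B_1^R(\0)$, so $|t_r| \leq 1$ for every $r$ — the triangle inequality gives
\[
\left| \frac{\partial^{i_1 + \cdots + i_{n-1}} (t_1 f_1 + \cdots + t_R f_R)}{\partial_{x_1}^{i_1} \cdots \partial_{x_{n-1}}^{i_{n-1}}}(\X) \right| \leq \sum_{r=1}^{R} |t_r| \cdot L_r \leq \sum_{r=1}^{R} L_r
\]
on $\overline{B_{2\tau}(\hat{\X}_0)} \times \overline{\mathscr{Y}}$. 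Taking $L$ to be any integer exceeding $\max\{ \sum_r M_r, \sum_r L_r, \rho \}$ simultaneously handles the boundedness of the $\hat{f}_{r,y}$, the derivative bound, and the final requirement $\rho \leq L$; note $\rho$ from \eqref{rho} is a fixed finite constant, so enlarging $L$ to dominate it costs nothing.

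There is essentially no obstacle here: the lemma is a bookkeeping statement whose only content is that all the implicit constants floating around the setup can be absorbed into one integer $L$, uniformly over $y \in \overline{\mathscr{Y}}$ and $\t \in T_n$. The one point requiring a moment's care is that the derivative bound is asserted for the combination $t_1 f_1 + \cdots + t_R f_R$ rather than for the individual $f_r$; this is immediate from linearity of differentiation and $|t_r| \leq 1$ on $T_n$, as above. (Observe also that only derivatives in the variables $x_1, \dots, x_{n-1}$ appear, consistent with the "hatted" functions, though in fact boundedness of all order-$\leq \ell$ derivatives holds by the same compactness argument.)
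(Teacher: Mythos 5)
Your proposal is correct and follows essentially the same route as the paper: continuity of the $f_r$ and their partial derivatives up to order $\ell$ on the compact set $\overline{B_{2\tau}(\hat{\X}_{0})}\times\overline{\mathscr{Y}}$ gives the bounds, and $L$ is then chosen as an integer dominating these finitely many maxima together with $\rho$. The only cosmetic difference is that you bound each $f_r$ separately and use $|t_r|\leq 1$ with the triangle inequality, whereas the paper takes the maximum of the combined function $t_1f_1+\cdots+t_Rf_R$ directly over the compact set $T_n$; both are equally valid.
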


\begin{proof}
By assumption $f_{r}(\X)$ is smooth, hence on the compact domain $\overline{B_{2\tau}(\hat{\X}_{0})}\times\overline{\mathscr{Y}}$ it attains a maximum $M_{r}$. Now by definition $\hat{f}_{r, y}(\hat{\X})=f_{r}(\hat{\X}, y)$, hence for any $y\in \overline{\mathscr{Y}}$
\[|\hat{f}_{r, y}(\hat{\X})|<M_{r}\]
on $\overline{B_{2\tau}(\hat{\X}_{0})}$.
For the derivatives note that all domains of definition, i.e. $\overline{B_{2\tau}(\hat{\X}_{0})}$, $\overline{\mathscr{Y}}$ and $T_{n}$ are compact and all the relevant functions depend at least continuously on $\X=(\hat{\X}, y)$ and $\t$, hence for any given suitable $(i_{1}, ..., i_{n-1})$ there exists a maximum
\[M_{(i_{1}, ..., i_{n-1})}=\max_{\X, \t}{\left|\frac{\del^{i_{1}+\cdots + i_{n-1}}t_{1}f_{1}+\cdots t_{R}f_{R}}{\del^{i_{1}}_{x_{1}}\cdots \del^{i_{n-1}}_{x_{n-1}}}(\X)\right|}.\]
Since there are only finitely many suitable choices for $(i_{1}, ..., i_{n-1})$ we can also take the maximum over them and define $L$ to be the smalles natural number such that
\[L\geq \max_{(i_{1}, ..., i_{n-1})}M_{(i_{1}, ..., i_{n-1})}\]
and
\[L\geq \rho.\]
\end{proof}

Specifically we have that $V_{y, \J}\subseteq [-L, L]^{n-1}$ independently of $y$ and $\J$.\\ 
\ \\
We split the set of $\K\in \Z^{n}$ into three disjoint subsets. Let $\hat{\K}=(k_{1}, ..., k_{n-1})$ and  
\[D(\hat{\K}, \J)=\min_{y\in\overline{\mathscr{Y}}}\dist\left(\frac{\hat{\K}}{j_{1}}, V_{y, \J}\right).\]
Now define
\[\mathscr{K}_{\J; 1}=\left\{\K\in \Z^{n}\left| \frac{\hat{\K}}{j_{1}}\in \bigcup_{y\in\overline{\mathscr{Y}}}{V_{y, \J}}, |k_{n}|\leq 2j_{1}L\right.\right\},\]
\[\mathscr{K}_{\J; 2}=\left\{\K\in \Z^{n}\left|D(\hat{\K}, \J)\geq \rho \right.\right\}\cup \left\{\K\in \Z^{n}\mid |k_{n}|>2j_{1}L\right\}\]
and
\[\mathscr{K}_{\J; 3}=\left\{\K\in \Z^{n}\left| D(\hat{\K}, \J)<\rho, \frac{\hat{\K}}{j_{1}}\notin \bigcup_{y\in\overline{\mathscr{Y}}}{V_{y, \J}}, |k_{n}|\leq 2j_{1}L\right.\right\}.\]
For each $1\leq i\leq 3$, we let
\begin{equation}
N_{i}=\sum_{\substack{1\leq j_{1}\leq J\\0\leq j_{2}, ..., j_{R}\leq j_{1}\\ (\J/j_{1})\in \tilde{T}_{n}}}{\left(\prod_{r=1}^{R}{b_{j_{r}}}\right)\left| \sum_{q\leq Q}{q^{n}\sum_{\K\in \mathscr{K}_{\J; i}}{I(q; \J; \K)}}\right| }
\end{equation}
such that
\begin{equation}\label{Nsplit}
N^{(1; (1, ..., 1); n)}(Q, \delta)\ll N_{1}+N_{2}+N_{3}
\end{equation}
and proceed to bound each $N_{i}$ seperately.

\section{Bounds for $N_{1}, N_{2}$ and $N_{3}$}\label{secbounds}
\begin{lem}\label{countK}
For any $K>0$ we have that
\[\left\{\frac{\hat{\K}}{j_{1}}\mid \hat{\K}\in \Z^{n-1},  D(\hat{\K}, \J)<K\right\}\subseteq [-L-K, L+K]^{n-1},\]
where $L$ is defined as in \ref{boundedfy}.
\end{lem}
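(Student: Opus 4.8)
The plan is to unwind the definition of $D(\hat{\K}, \J)$ and combine it with the containment $V_{y, \J} \subseteq [-L, L]^{n-1}$ established right after Lemma \ref{boundedfy}. Fix $\hat{\K} \in \Z^{n-1}$ with $D(\hat{\K}, \J) < K$. By definition, $D(\hat{\K}, \J) = \min_{y \in \overline{\mathscr{Y}}} \dist(\hat{\K}/j_1, V_{y, \J})$, and since $\overline{\mathscr{Y}}$ is compact and the distance varies continuously in $y$ (the functions $\hat{f}_{r,y}$ depending continuously on $y$), this minimum is attained at some $y_0 \in \overline{\mathscr{Y}}$. Hence $\dist(\hat{\K}/j_1, V_{y_0, \J}) < K$.

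Next I would produce a point of $V_{y_0, \J}$ within distance $K$ of $\hat{\K}/j_1$. Since $V_{y_0, \J} = \nabla \hat{F}_{y_0, \J}(U_{y_0})$ is nonempty (as $U_{y_0}$ is nonempty for the weight $\omega$) and $\dist(\hat{\K}/j_1, V_{y_0, \J}) < K$, there exists $\v \in V_{y_0, \J}$ with $|\hat{\K}/j_1 - \v| < K$ in the $L^\infty$-norm. By the remark following Lemma \ref{boundedfy}, $\v \in V_{y_0, \J} \subseteq [-L, L]^{n-1}$, so each coordinate of $\v$ lies in $[-L, L]$. The triangle inequality in each coordinate then gives that each coordinate of $\hat{\K}/j_1$ lies in $[-L - K, L + K]$, i.e. $\hat{\K}/j_1 \in [-L-K, L+K]^{n-1}$, which is exactly the claim.

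Strictly speaking one should be slightly careful about whether the infimum defining $\dist$ is attained in $V_{y_0,\J}$ (it need not be, as $V_{y_0,\J}$ is open); but this is immaterial since we only need the strict inequality $\dist(\hat{\K}/j_1, V_{y_0,\J}) < K$ to guarantee the existence of \emph{some} $\v \in V_{y_0,\J}$ with $|\hat{\K}/j_1 - \v| < K$, which follows directly from the definition of the infimum. The only mild subtlety — and the one place deserving a sentence of justification — is the attainment of the minimum over $y \in \overline{\mathscr{Y}}$, which rests on compactness of $\overline{\mathscr{Y}}$ together with continuity of $y \mapsto \dist(\hat{\K}/j_1, V_{y,\J})$; but even this can be sidestepped entirely, since for the containment it suffices to pick any $y$ with $\dist(\hat{\K}/j_1, V_{y,\J}) < K$, and such a $y$ exists by definition of the minimum whenever $D(\hat{\K},\J) < K$. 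There is no real obstacle here: the lemma is essentially a bookkeeping consequence of $V_{y,\J} \subseteq [-L,L]^{n-1}$ and is presumably recorded only to control the number of relevant $\hat{\K}$ in the later estimates for $N_2$ and $N_3$.
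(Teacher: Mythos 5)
Your proof is correct and follows essentially the same route as the paper: both arguments rest on the containment $V_{y,\J}\subseteq[-L,L]^{n-1}$ from Lemma \ref{boundedfy} together with a triangle inequality. The only cosmetic difference is that the paper first disposes of the case $\hat{\K}/j_{1}\in[-L,L]^{n-1}$ and then applies the reverse triangle inequality, whereas you pick a witness point $\v\in V_{y,\J}$ within distance $K$ and use the ordinary triangle inequality coordinatewise, which avoids the case split.
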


\begin{proof}
For $(\hat{\K}/j_{1})\in [-L, L]^{n-1}$ the inclusion is obvious, so let $(\hat{\K}/j_{1})\not\in [-L, L]^{n-1}$. We have
\begin{align*}
K>D(\hat{\K}, \J)&=\min_{y\in \overline{\mathscr{Y}}}\inf_{\z\in V_{y, \J}}\left|\frac{\hat{\K}}{j_{1}}-\z\right|\geq \min_{\z\in [-L, L]^{n-1}}\left|\frac{\hat{\K}}{j_{1}}-\z\right|\\
&\geq \min_{\textbf{z}\in [-L, L]^{n-1}}\left|\frac{|\hat{\K}|}{j_{1}}-|\textbf{z}|\right|=\frac{|\hat{\K}|}{j_{1}}-L,
\end{align*}
hence $K+L>|\hat{\K}/j_{1}|$ as desired.\\
\end{proof}

\paragraph{Case $\K\in \mathscr{K}_{\J; 2}$.}
Let
\[D_{1}(\hat{\K}, \J)=j_{1}D(\hat{\K}, \J)=\min_{y\in\overline{\mathscr{Y}}}\dist(\hat{\K}, j_{1}V_{y, \J}).\]
For a fixed $k_{n}\in \Z$, consider the integral
\[\int_{\R^{n-1}}{\omega(\hat{\X}, y)e\left(qj_{1}\left(\hat{F}_{y, \textbf{j}}(\hat{\X})-\frac{\hat{\K}\cdot \hat{\X}+k_{n}y}{j_{1}}\right)\right)\D\hat{\X}}.\]
with $(\hat{\K}, k_{n})\in \mathscr{K}_{\J; 2}$ and $D(\hat{\K}, \J)\geq \rho$. Let 
\[\varphi_{y, 1}(\hat{\X})=\frac{j_{1}\hat{f}_{1, y}(\hat{\X})+\cdots +j_{R}\hat{f}_{R, y}(\hat{\X})-\hat{\K}\cdot \hat{\X}-k_{n}y}{D_{1}(\hat{\K},\J)}\]
and $\lambda_{1} = qD_{1}(\hat{\K}, \J)$. Then by definition of $V_{y, \J}$
\[|\nabla\varphi_{y, 1}(\hat{\X})|=\frac{|j_{1}\nabla \hat{f}_{1,y}+\cdots+j_{R}\nabla\hat{f}_{R, y}(\hat{\X})-\hat{\K}|}{D_{1}(\hat{\K},\J)}\geq 1\]
for $\hat{\X}\in U_{y}$. Let $U_{y}^{+}\subseteq \R^{n-1}$ be an open set such that $\overline{U_{y}}\subseteq U_{y}^{+}\subseteq \mathscr{D}$, $V_{y, \J}^{+}=\nabla F_{\J}\left(U_{y}^{+}\right)\subseteq [-2L, 2L]^{n-1}$,
\begin{equation}\label{rhodistance}
\min_{\textbf{z}\in \overline{{U_{y}}}}<\frac{\dist(\del \mathscr{D}, \del U_{y})}{4}
\end{equation}
for any $\hat{\X}\in U_{y}^{+}$ and
\[\min_{\textbf{z}\in \overline{U_{y}}}{\max_{\t\in \tilde{T}_{n}}{\left|\nabla(t_{1}f_{1, y}+\cdots + t_{R}f_{R, y})(\X)-\nabla(t_{1}f_{1, y}+\cdots + t_{R}f_{R, y})(\textbf{z})\right|}}<\frac{\rho}{2}\]
for any $\hat{\X}\in U_{y}^{+}$. Then we have
\[\nabla\varphi_{y, 1}(\hat{\X})\geq \frac{1}{2}\]
for all $\hat{\X}\in U_{y}^{+}$.\\
Now assume that $|k_{n}|\leq D_{1}(\hat{\K}, \J)$ or $|k_{n}|\leq 2j_{1}L$.

\begin{lem}\label{phasederivbound}
Let $i_{1}, ..., i_{n-1}\in \Z_{\geq 0}$ with $\sum_{\mu=1}^{n-1}{i_{\mu}}\leq \ell$. Then for all $\hat{\X}\in U_{y}^{+}$ we have
\[\left|\frac{\del^{i_{1}+\cdots + i_{n-1}}\varphi_{y, 1}}{\del^{i_{1}}_{x_{1}}\cdots \del^{i_{n-1}}_{x_{n-1}}}(\hat{\X})\right|\ll 1,\]
where the implicit constant depends only on $(i_{1}, ..., i_{n-1})$, $\rho$, $\tau$, $\varepsilon_{1}$ and upper bounds for (the absolute values of) finitely many derivatives of $f_{r}$ on $\mathscr{D}\times \mathscr{Y}$ for $1\leq r\leq R$.
\end{lem}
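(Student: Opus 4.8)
The plan is to differentiate the explicit formula for $\varphi_{y,1}$ term by term, splitting off the ``curved'' contribution $j_1\hat{f}_{1,y}+\cdots+j_R\hat{f}_{R,y}=j_1\hat{F}_{y,\J}$ from the affine part $-\hat{\K}\cdot\hat{\X}-k_ny$, and in each case to verify that after division by $D_1(\hat{\K},\J)$ what remains is $O(1)$. The one structural fact that makes everything work is the lower bound
\[
D_1(\hat{\K},\J)=j_1 D(\hat{\K},\J)\geq j_1\rho ,
\]
which holds because in the regime $\K\in\mathscr{K}_{\J;2}$ under consideration we have $D(\hat{\K},\J)\geq\rho$ (the complementary piece $|k_n|>2j_1L$ of $\mathscr{K}_{\J;2}$ being covered by the alternative hypothesis $|k_n|\leq 2j_1L$ stated just before the lemma). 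Combined with $1\leq j_1$ and $0\leq j_r/j_1\leq 1$ for $2\leq r\leq R$, this lets me trade every factor of $j_1$ for a factor of $D_1(\hat{\K},\J)/\rho$, and it is this trade that is repeated in each case.

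For a multi-index $(i_1,\dots,i_{n-1})$ with $2\leq\sum_\mu i_\mu\leq\ell$, the affine term is annihilated (it is affine in $\hat{\X}$, and $y$ is a fixed parameter), so the derivative equals $j_1 D_1(\hat{\K},\J)^{-1}$ times the corresponding derivative of $\hat{F}_{y,\J}=\hat{f}_{1,y}+(j_2/j_1)\hat{f}_{2,y}+\cdots+(j_R/j_1)\hat{f}_{R,y}$; since $|j_r/j_1|\leq 1$, Lemma \ref{boundedfy} bounds that derivative on $U_y^+\subseteq\mathscr{D}$ by a constant depending only on the order, on $R$, and on finitely many derivatives of the $f_r$, and dividing by $D_1(\hat{\K},\J)\geq j_1\rho$ gives a bound of the shape $L/\rho$. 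For $\sum_\mu i_\mu=1$ the derivative is $\bigl(j_1\del_{x_\mu}\hat{F}_{y,\J}-k_\mu\bigr)D_1(\hat{\K},\J)^{-1}$; the first term is $\ll j_1 L$ because $\nabla\hat{F}_{y,\J}(U_y^+)\subseteq[-2L,2L]^{n-1}$ by construction of $U_y^+$, while for the second I use that $V_{y,\J}\subseteq[-L,L]^{n-1}$ implies $|\hat{\K}|\leq D_1(\hat{\K},\J)+j_1L$, hence $|k_\mu|\leq D_1(\hat{\K},\J)+j_1L$; summing and dividing by $D_1(\hat{\K},\J)\geq j_1\rho$ again produces $O(L/\rho)$.

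There remains the case $\sum_\mu i_\mu=0$, i.e. bounding $|\varphi_{y,1}(\hat{\X})|$ itself. The numerator is $j_1\hat{F}_{y,\J}(\hat{\X})-\hat{\K}\cdot\hat{\X}-k_ny$; using $|\hat{F}_{y,\J}|\ll 1$, $|\hat{\X}|\ll\tau$ on $U_y^+$, $|y|\ll\varepsilon_1$ on $\overline{\mathscr{Y}}$, the estimate $|\hat{\K}|\leq D_1(\hat{\K},\J)+j_1L$ from the previous step, and the pre-lemma hypothesis $|k_n|\leq\max\bigl(D_1(\hat{\K},\J),2j_1L\bigr)\leq D_1(\hat{\K},\J)+2j_1L$, the numerator is $\ll j_1+D_1(\hat{\K},\J)\ll D_1(\hat{\K},\J)$, once more because $j_1\leq D_1(\hat{\K},\J)/\rho$; dividing by $D_1(\hat{\K},\J)$ closes this case. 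Tracking the dependence of all constants through these estimates gives precisely dependence on $(i_1,\dots,i_{n-1})$, $\rho$, $\tau$, $\varepsilon_1$, and finitely many derivatives of $f_1,\dots,f_R$ on $\mathscr{D}\times\mathscr{Y}$, as claimed.

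I do not expect a genuine obstacle here; the lemma is bookkeeping. The only point demanding care is that every factor of $j_1$ (and of $j_r\leq j_1$) must be compensated by the denominator $D_1(\hat{\K},\J)^{-1}\leq(j_1\rho)^{-1}$, which forces one to measure the sizes of $\hat{\K}$ and of $k_n$ \emph{relative to} $D_1(\hat{\K},\J)$ rather than relative to $j_1$ alone — and supplying exactly that comparison is the purpose of the definition of $\mathscr{K}_{\J;2}$ together with the hypothesis on $k_n$ imposed immediately before the statement.
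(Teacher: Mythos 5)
Your proof is correct, and the overall strategy coincides with the paper's: differentiate $\varphi_{y,1}$ term by term, bound every derivative of $j_1\hat{F}_{y,\J}$ by $j_1$ times a compactness constant via Lemma \ref{boundedfy}, and absorb each factor of $j_1$ into the denominator using $D_{1}(\hat{\K},\J)\geq j_1\rho$, with the hypothesis ``$|k_n|\leq D_1(\hat{\K},\J)$ or $|k_n|\leq 2j_1L$'' disposing of the $k_n y$ term. The one place where you genuinely diverge is the treatment of the linear terms $\hat{\K}\cdot\hat{\X}$ and $k_\mu$: the paper splits into the cases $|\hat{\K}|\geq Cj_1$ and $|\hat{\K}|<Cj_1$, and in the first case renormalizes the whole quotient by $|\hat{\K}|$ and proves $D_1(\hat{\K},\J)\geq\tfrac12|\hat{\K}|$ from the choice of $C$, whereas you use the single uniform inequality $|\hat{\K}|\leq D_1(\hat{\K},\J)+j_1L$, which follows from $j_1V_{y,\J}\subseteq[-j_1L,j_1L]^{n-1}$ by the triangle inequality (this is the scaled content of Lemma \ref{countK}). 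Your route eliminates the case distinction entirely and is, if anything, cleaner; both yield the same dependence of the implicit constant on $\rho$, $\tau$, $\varepsilon_1$, $L$ and finitely many derivatives of the $f_r$. One cosmetic remark: on $U_y^+$ one has $|\hat{\X}|\leq|\hat{\X}_0|+\tau$ rather than $|\hat{\X}|\ll\tau$, but since $\hat{\X}_0$ is fixed this does not affect anything.
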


\begin{proof}
Choose $C>0$ such that
\[\frac{1}{C}\max_{\substack{\t\in \tilde{T}_{n}\\ y\in \overline{\mathscr{Y}}\\\hat{\X}\in \overline{U_{y}}}}|\nabla(t_{1}\hat{f}_{1, y}+\cdots + t_{R}\hat{f}_{R, y})(\hat{\X})|<\frac{1}{2}.\]
and assume $j_{1}C\leq |\hat{\K}|$. Then we have
\[\left|\frac{\hat{\K}}{|\hat{\K}|}-\frac{j_{1}\z}{|\hat{\K}|}\right|\geq 1-\frac{j_{1}\z}{|\hat{\K}|}>\frac{1}{2}\]
for all $\textbf{z}\in \bigcup_{y\in \mathscr{Y}}V_{y, \J}$ and hence
\[\frac{1}{|\hat{\K}|}D_{1}(\hat{\K}, \J)=\min_{y\in \overline{\mathscr{Y}}}\dist\left(\frac{\hat{\K}}{|\hat{\K}|}, \frac{j_{1}V_{y, \J}}{|\hat{\K}|}\right)\geq \frac{1}{2}.\]
Therefore
\begin{align*}
|\varphi_{y, 1}(\hat{\X})|&\leq\left| \frac{\frac{j_{1}}{|\hat{\K}|}\hat{f}_{1, y}(\hat{\X})+\cdots +\frac{j_{R}}{|\hat{\K}|}\hat{f}_{R, y}(\hat{\X})-\frac{\hat{\K}}{|\hat{\K}|}\hat{\X}}{\frac{1}{|\hat{\K}|}D_{1}(\hat{\K}, \J)} \right|+\left|\frac{k_{n}y}{D_{1}(\hat{\K}, \J)}\right|\\
&\leq 2\left(\frac{j_{1}}{|\hat{\K}|}|\hat{f}_{1, y}(\hat{\X})|+\cdots + \frac{j_{R}}{|\hat{\K}|}|\hat{f}_{R, y}(\hat{\X})|+\left|\frac{\hat{\K}}{|\hat{\K}|}\right|\cdot |\hat{\X}|\right)+\frac{|k_{n}|}{D_{1}(\hat{\K},\J)}|y|.
\end{align*}
Since $|\hat{\K}|>j_{1}C$, we have $\frac{j_{r}}{|\hat{\K}|}\leq 1$ for $1\leq r\leq R$. Let $M_{r}>0$ be the bound for $|\hat{f}_{r, y}|$ established in \ref{boundedfy} on $U_{y}^{+}$ for $1\leq r\leq R$ and $S=M_{1}+\cdots +M_{R}$. Now by assumption we either have 
\[\frac{|k_{n}|}{D_{1}(\hat{\K}, \J)}\leq 1\qquad \mbox{or}\qquad \frac{|k_{n}|}{D_{1}(\hat{\K}, \J)}\leq \frac{2L}{\rho},\]
hence the final term is also bounded independently of $\J$.
Then
\[|\varphi_{y, 1}(\hat{\X})|\leq 2(S+\tau)+\varepsilon_{1}|\qquad\mbox{or}\qquad|\varphi_{y, 1}(\hat{\X})|\leq 2(S+\tau)+\frac{2L}{\rho}\varepsilon_{1}\]
as desired. If $|\hat{\K}|<Cj_{1}$ we immediately conclude
\[\varphi_{y, 1}(\hat{\X})|\leq\left| \frac{\hat{f}_{1, y}(\hat{\X})+\frac{j_{2}}{j_{1}}\hat{f}_{2, y}(\hat{\X})\cdots +\frac{j_{R}}{j_{1}}\hat{f}_{R, y}(\hat{\X})-\frac{\hat{\K}}{j_{1}}\hat{\X}}{\rho} \right|+\left|\frac{k_{n}y}{D_{1}(\hat{\K}, \J)}\right|\ll 1\]
for all $\hat{\X}\in U_{y}^{+}$ with the same argument for the final term as above. For the first partial derivatives consider
\[\left| \frac{\del \varphi_{y, 1}}{\del x_{i}}(\hat{\X})\right|=\left|\frac{j_{1}\frac{\del \hat{f}_{1, y}}{\del x_{i}}(\hat{\X})+\cdots + j_{R}\frac{\del \hat{f}_{R, y}}{\del x_{i}}(\hat{\X}) -k_{i}}{D_{1}(\hat{\K}, \J)}\right|\]
which can be treated with a similar argument. For higher partial derivatives the terms $\hat{\K}\cdot\hat{\X}$ vanish and the desired result follows easily with
\begin{align*}
\left| \frac{\del^{i_{1}+\cdots+i_{n-1}} \varphi_{y, 1}}{\del_{x_{1}}^{i_{1}}\cdots \del_{x_{n-1}}^{i_{n-1}}}(\hat{\X})\right|&=\left|\frac{j_{1}\frac{\del^{i_{1}+\cdots + i_{n-1}} \hat{f}_{1, y}}{\del_{x_{1}}^{i_{1}}\cdots \del_{x_{n-1}}^{i_{n-1}}}(\hat{\X})+\cdots + j_{R}\frac{\del^{i_{1}+\cdots + i_{n-1}} \hat{f}_{R, y}}{\del_{x_{1}}^{i_{1}}\cdots \del_{x_{n-1}}^{i_{n-1}}}(\hat{\X}) }{D_{1}(\hat{\K}, \J)}\right|\\
&\leq \frac{1}{\rho}\left(\left|\frac{\del^{i_{1}+\cdots + i_{n-1}} \hat{f}_{1, y}}{\del_{x_{1}}^{i_{1}}\cdots \del_{x_{n-1}}^{i_{n-1}}}(\hat{\X})\right|+\sum_{r=2}^{R}{\frac{j_{r}}{j_{1}}\left|\frac{\del^{i_{1}+ \cdots + i_{n-1}} \hat{f}_{r, y}}{\del_{x_{1}}^{i_{1}}\cdots \del_{x_{n-1}}^{i_{n-1}}}(\hat{\X})\right|}\right).
\end{align*}
\end{proof}
Therefore with Lemma \ref{nonstat} for $\varphi=\varphi_{y, 1}$ and $\lambda=\lambda_{1}$ as chosen above we have
\begin{equation}
\int_{\R^{n-1}}{\omega(\hat{\X}, y)e\left(qj_{1}\left(\hat{F}_{y, \textbf{j}}(\hat{\X})-\frac{\hat{\K}\cdot \hat{\X}+k_{n}y}{j_{1}}\right)\right)\D\hat{\X}}\ll(qD_{1}(\hat{\K},\J))^{-\ell+1}
\end{equation}
and hence
\begin{equation}\label{kbig}
I(q; \J; \K)\ll \int_{\mathscr{Y}}{(qD_{1}(\hat{\K}, \J))^{-\ell+1}\D y}\ll_{\varepsilon_{1}} (qD_{1}(\hat{\K}, \J))^{-\ell+1}.
\end{equation}
Now assume that $|k_{n}|> D_{1}(\hat{\K}, \J)$ and $|k_{n}|>2j_{1}L$. Consider
\[\int_{\R}{\omega(\hat{\X}, y)e\left(qj_{1}\left(\hat{F}_{y, \textbf{j}}(\hat{\X})-\frac{\hat{\K}\cdot \hat{\X}+k_{n}y}{j_{1}}\right)\right)\D y}.\]
Let $\lambda_{2}=qk_{n}$ and
\[\varphi_{\hat{\X}, 2}(y)=\frac{j_{1}f_{1}(\hat{\X}, y)+\cdots j_{R}f_{R}(\hat{\X}, y)-\hat{\K}\cdot\hat{\X}-k_{n}y}{k_{n}}.\]
Observe that
\begin{align*}
|\varphi_{\hat{\X}, 2}'(y)|&=\left|\frac{j_{1}\frac{\del f_{1}}{\del y}(\hat{\X}, y) +\cdots  j_{R}\frac{\del f_{R}}{\del y}(\hat{\X}, y) -k_{n}}{k_{n}}\right|\\
&\geq \left|1-\frac{\frac{\del f_{1}}{\del y}(\hat{\X}, y)+\frac{j_{2}}{j_{R}}\frac{\del f_{2}}{\del y}(\hat{\X}, y)+\cdots +\frac{j_{R}}{j_{1}}\frac{\del f_{R}}{\del y}(\hat{\X}, y)  }{2L}\right|\\
&\geq \frac{1}{2}.
\end{align*}
Consider further that
\begin{align*}
|\varphi_{\hat{\X}, 2}(y)|&=\left|\frac{j_{1}f_{1}(\hat{\X}, y)+\cdots j_{R}f_{R}(\hat{\X}, y)-\hat{\K}\cdot\hat{\X}-k_{n}y}{k_{n}}\right|\\
&\leq \frac{|f_{1}(\hat{\X}, y)|+\frac{j_{2}}{j_{1}}|f_{2}(\hat{\X}, y)|\cdots \frac{j_{R}}{j_{1}}|f_{R}(\hat{\X}, y)|+\frac{|\hat{\K}|}{j_{1}}\cdot|\hat{\X}|}{|k_{n}|}+|y|\\
&\leq \frac{|f_{1}(\hat{\X}, y)|+\frac{j_{2}}{j_{1}}|f_{2}(\hat{\X}, y)|\cdots \frac{j_{R}}{j_{1}}|f_{R}(\hat{\X}, y)|+(L+|k_{n}|)\cdot|\hat{\X}|}{|k_{n}|}+|y|\\
&\leq \frac{|f_{1}(\hat{\X}, y)|+\frac{j_{2}}{j_{1}}|f_{2}(\hat{\X}, y)|\cdots \frac{j_{R}}{j_{1}}|f_{R}(\hat{\X}, y)|+L|\hat{\X}|}{2L}+|\hat{\X}|+|y|,
\end{align*}
and
\begin{align*}
|\varphi_{\hat{\X}, 2}'(y)|&=\left|\frac{\frac{\del f_{1}}{\del y}(\hat{\X}, y)+ \frac{j_{2}}{j_{1}}\frac{\del f_{R}}{\del y}(\hat{\X}, y)\cdots + \frac{j_{R}}{j_{1}}\frac{\del f_{R}}{\del y}(\hat{\X}, y)-\frac{k_{n}}{j_{1}}y}{k_{n}}\right|\\
&\leq \frac{j_{1}}{|k_{n}|}\left|\frac{\del f_{1}}{\del y}(\hat{\X}, y)\right|+\cdots + \frac{j_{R}}{|k_{n}|}\left|\frac{\del f_{R}}{\del y}(\hat{\X},y)\right|+1,
\end{align*}
hence with analogous arguments as above we conclude that
\[\left|\frac{\del^{i}\varphi_{\hat{\X}, 2}}{\del y^{i}}(\hat{\X}, y)\right|\ll 1,\]
where the implicit constants only depend on $i, \rho, \tau, \varepsilon_{1}$ and upper bounds for (the absolute values of) finitely many derivatives of $f_{r}$ on $\mathscr{D}\times \mathscr{Y}$.
So Lemma \ref{nonstat} applies in the one dimensional case, hence
\begin{equation}
\int_{\R}{\omega(\hat{\X}, y)e\left(qj_{1}\left(\hat{F}_{y, \textbf{j}}(\hat{\X})-\frac{\hat{\K}\cdot \hat{\X}+k_{n}y}{j_{1}}\right)\right)\D y}\ll (qk_{n})^{-\ell +1}
\end{equation}
and therefore
\begin{equation}\label{knbig}
I(q; \J; \K)\ll \int_{U_{y}}{(qk_{n})^{-\ell+1}\D \hat{\X}}\ll_{\kappa} (qk_{n})^{-\ell+1}.
\end{equation}
Given these estimates we can split up the sum
\[\sum_{\K\in \mathscr{K}(\J; 2)}{I(q; \textbf{j}; \textbf{k})}=\sum_{\textbf{k}\in \mathscr{K}(\J; 2)}{\int_{\R^{n}}{\omega(\hat{\X}, y)e\left(qj_{1}\left(\hat{F}_{y, \textbf{j}}(\hat{\X})-\frac{\hat{\K}\cdot \hat{\X}+k_{n}y}{j_{1}}\right)\right)\D\hat{\X}\D y} }\]
as follows
\begin{align*}
\sum_{(\hat{\K}, k_{n})\in \mathscr{K}(\J; 2)}{I(q; \J; (\hat{\K}, k_{n}))}&=\sum_{\substack{D(\hat{\K}, \J)\geq \rho\\ D_{1}(\hat{\K}, \J)\geq |k_{n}|}}{I(q; \J, (\hat{\K}, k_{n}))}+\sum_{\substack{D(\hat{\K, \J})\geq \rho\\ 2j_{1}L\geq |k_{n}|> D_{1}(\hat{\K}, \J)\\}}{I(q; \J, (\hat{\K}, k_{n}))}\\
&\quad +\sum_{\substack{|k_{n}|> D_{1}(\hat{\K}, \J)\\|k_{n}|>2j_{1}L}}{I(q; \J; (\hat{\K}, k_{n}))}
\end{align*}
With (\ref{kbig}) and (\ref{knbig}) we obtain
\begin{align}\label{K2argument}
\sum_{\substack{D(\hat{\K}, \J)\geq \rho\\ D_{1}(\hat{\K}, \J)\geq |k_{n}|}}{I(q; \J, (\hat{\K}, k_{n}))}&\ll q^{-\ell+1}\sum_{\substack{D(\hat{\K}, \J)\geq \rho\\ D_{1}(\hat{\K}, \J)\geq |k_{n}|}}{D_{1}(\hat{\K}, \J)^{-\ell+1}}\\
&=q^{-\ell+1}\sum_{d=0}^{\infty}{\sum_{\substack{D_{1}(\hat{\K}, \J)\geq |k_{n}|\\ 2^{d}j_{1}\rho \leq D_{1}(\hat{\K}, \J)<2^{d+1}j_{1}\rho}}{D_{1}(\hat{\K}, \J)^{-\ell+1}}}\nonumber\\
&\leq q^{-\ell+1}\sum_{d=0}^{\infty}{\sum_{\substack{D_{1}(\hat{\K},  \J)\geq |k_{n}|\\ 2^{d}j_{1}\rho\leq D_{1}(\hat{\K},  \J)<2^{d+1}j_{1}\rho}}{\frac{1}{(2^{d}j_{1}\rho)^{\ell-1}}}}\nonumber\\
&\ll q^{-\ell+1}\sum_{d=0}^{\infty}{\frac{(j_{1}L+2^{d+1}j_{1}\rho)^{n}}{(2^{d}j_{1}\rho)^{\ell-1}}}\nonumber\\
&\ll_{L, n} q^{-\ell+1}\nonumber
\end{align}
and
\begin{align}
\sum_{\substack{D(\hat{\K}, \J)\geq \rho\\ 2j_{1}L\geq |k_{n}|>D_{1}(\hat{\K}, \J)}}{I(q; \J, (\hat{\K}, k_{n}))}&\ll q^{-\ell+1}\sum_{\substack{D(\hat{\K}, \J)\geq \rho\\ 2j_{1}L\geq |k_{n}|>D_{1}(\hat{\K}, \J)}}{D_{1}(\hat{\K}, \J)^{-\ell+1}}\\
&\leq q^{-\ell+1}\sum_{d=0}^{\infty}{\sum_{\substack{2j_{1}L\geq |k_{n}|\\ 2^{d}j_{1}\rho \leq D_{1}(\hat{\K}, \J)<2^{d+1}j_{1}\rho}}{D_{1}(\hat{\K}, \J)^{-\ell+1}}}\nonumber\\
&\leq q^{-\ell+1}\sum_{d=0}^{\infty}{\sum_{\substack{2j_{1}L\geq |k_{n}|\\ 2^{d}j_{1}\rho\leq D_{1}(\hat{\K},  \J)<2^{d+1}j_{1}\rho}}{\frac{1}{(2^{d}j_{1}\rho)^{\ell-1}}}}\nonumber\\
&\ll q^{-\ell+1}\sum_{d=0}^{\infty}{2j_{1}L\frac{(j_{1}L+2^{d+1}j_{1}\rho)^{n-1}}{(2^{d}j_{1}\rho)^{\ell-1}}}\nonumber\\
&\ll_{L, n} q^{-\ell+1}\nonumber
\end{align}
and
\begin{align}
\sum_{\substack{|k_{n}|>2j_{1}L\\ |k_{n}|> D_{1}(\hat{\K}, \J)}}{I(q; \K; \J)}&\ll q^{-\ell+1}\sum_{\substack{|k_{n}|>2j_{1}L\\ |k_{n}|>D_{1}(\hat{\K}, \J)}}{|k_{n}|^{-\ell+1}}\\
&\ll q^{-\ell+1} \sum_{\substack{|k_{n}|>2j_{1}L}}{\frac{(j_{1}L+|k_{n}|)^{n-1}}{|k_{n}|^{\ell-1}}}\nonumber\\
&\ll q^{-\ell+1} 2^{n-1}\sum_{\substack{|k_{n}|>2j_{1}L}}{|k_{n}|^{n-\ell}}\nonumber\\
&\ll q^{-\ell+1}\int_{2j_{1}L}^{\infty}{\frac{1}{t^{\ell-n}}\D t}\nonumber\\
&\ll q^{-\ell+1}j_{1}^{n-\ell+1}.\nonumber
\end{align}
Here the implicit constants only depend on $L, \rho,  n, \ell, \varepsilon_{1}, \kappa$ and upper bounds for (the absolute values of) finitely many derivatives of $\omega$ and $f_{r}$ for $1\leq r\leq R$ on $\mathscr{D}\times \mathscr{Y}$. Consequently we obtain
\begin{align}\label{case2}
N_{2}&\ll \sum_{\substack{1\leq j_{1}\leq J\\ 0\leq j_{2}, ..., j_{R}\leq j_{1}\\ \J/j_{1}\in \tilde{T}_{n}}}{\left(\prod_{r=1}^{R}{\left(\frac{1}{J}+\min\left(\delta, \frac{1}{j_{r}}\right)\right)}\right)j_{1}^{n-\ell+1}\sum_{q\leq Q}q^{n-\ell+1}}\\
&\ll \sum_{\substack{1\leq j_{1}\leq J\\ 0\leq j_{2}, ..., j_{R}\leq j_{1}}}{\left(\prod_{r=1}^{R}{\left(\frac{1}{J}+\min\left(\delta, \frac{1}{j_{r}}\right)\right)}\right)\int_{1}^{Q}{q^{n-\ell+1}\D q}}\nonumber\\
&\ll \sum_{\substack{1\leq j_{1}\leq J\\ 0\leq j_{2}, ..., j_{R}\leq j_{1}}}{\left(\prod_{r=1}^{R}{\left(\frac{1}{J}+\min\left(\delta, \frac{1}{j_{r}}\right)\right)}\right)\int_{1}^{Q}\frac{1}{q}\D q}\nonumber\\
&= \log Q \sum_{\substack{1\leq j_{1}\leq J\\ 0\leq j_{2}, ..., j_{R-1}\leq j_{1}}}\left(\left(\prod_{r=1}^{R}{\frac{1}{J}+\min\left(\delta, \frac{1}{j_{r}}\right)}\right) \sum_{0\leq j_{R}\leq j_{1}}{\left(\frac{1}{J}+\min\left(\delta, \frac{1}{j_{r}}\right)\right)}\right)\nonumber\\
&= \log Q \sum_{1\leq j_{1}\leq J}{\left(\left(\frac{1}{J}+\min\left(\delta, \frac{1}{j_{1}}\right)\right)\prod_{r=2}^{R}{\sum_{0\leq j_{r}\leq j_{1}}{\left(\frac{1}{J}+\min \left(\delta, \frac{1}{j_{r}}\right)\right)}}\right)}\nonumber\\
&\leq \log Q \prod_{r=1}^{R}{\sum_{0\leq j_{r}\leq J}{\left(\frac{1}{J}+\min\left(\delta, \frac{1}{j_{r}}\right)\right)}}\nonumber\\
&= \log Q \left(\sum_{0\leq j\leq J}{\left(\frac{1}{J}+\min\left(\delta, \frac{1}{j}\right)\right)}\right)^{R}\nonumber\\
&\ll \log Q (1+\log J)^{R}.\nonumber
\end{align} 

\paragraph{Case $\K\in \mathscr{K}_{\J; 3}$.} Let
$\lambda=qj_{1}$ and
\[\varphi_{y}(\hat{\X})=\hat{F}_{y, \J}(\hat{\X})-\frac{\hat{\K}}{j_{1}}\hat{\X}-\frac{k_{n}}{j_{1}}y.\]
For each $y$ we know that $\nabla\hat{F}_{y, \J}$ is a diffeomorphism on $\mathscr{D}$, hence for any fixed $\J$ and any $\hat{\K}$ with $(\hat{\K}/j_{1})\in \nabla \hat{F}_{y, \J}(\mathscr{D})$ we have a unique preimage
\[\xcrit=(\nabla \hat{F}_{y, \J})^{-1}(\hat{\K}/j_{1}).\]
This defines a critical point for $\varphi_{y}$, since
\[\nabla\varphi_{y}(\xcrit)=\nabla \hat{F}_{y, \J}(\xcrit)-\frac{\hat{\K}}{j_{1}}=\0.\]
Let $\mathscr{D}_{+}$ be an open set such that $\overline{\mathscr{D}}\subseteq \mathscr{D}_{+}\subseteq B_{3\tau/2}(\hat{\X}_{0})$.

\begin{lem}\label{nonstatgradientbound}
Let $\hat{\X}\in \mathscr{D}_{+}\backslash\{\xcrit\}$. Then 
\[\frac{|\hat{\X}-\xcrit|}{|\nabla \varphi_{y}(\hat{\X})|}\ll 1\]
where the implicit constant is independant of $\J$ and $\hat{\K}$.
\end{lem}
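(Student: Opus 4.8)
The plan is to show that near its critical point $\varphi_{y}$ agrees, up to a uniformly bounded invertible linear map, with $\hat\X\mapsto\hat\X-\xcrit$. Recall $\xcrit\in\mathscr{D}=B_{\tau}(\hat{\X}_{0})$, while $\hat\X\in\mathscr{D}_{+}\subseteq B_{3\tau/2}(\hat{\X}_{0})$, so the segment joining $\xcrit$ to $\hat\X$ stays in the convex set $B_{3\tau/2}(\hat{\X}_{0})\subseteq B_{2\tau}(\hat{\X}_{0})$. Since $\hat{F}_{y,\J}\in\mathcal{C}^{2}$ and $\nabla\varphi_{y}(\hat\X)=\nabla\hat{F}_{y,\J}(\hat\X)-\nabla\hat{F}_{y,\J}(\xcrit)$, the fundamental theorem of calculus yields
\[\nabla\varphi_{y}(\hat\X)=A\,(\hat\X-\xcrit),\qquad A:=\int_{0}^{1}H_{\hat{F}_{y,\J}}\bigl(\xcrit+t(\hat\X-\xcrit)\bigr)\,\D t.\]
It therefore suffices to prove that $A$ is invertible with $\norm{A^{-1}}\ll 1$ uniformly in $\J$, $\hat{\K}$ and $y$, for then $\abs{\hat\X-\xcrit}=\abs{A^{-1}\nabla\varphi_{y}(\hat\X)}\leq\norm{A^{-1}}\,\abs{\nabla\varphi_{y}(\hat\X)}$.

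First I record a uniform spectral gap for the Hessians being averaged. Put $\t:=\J/j_{1}$; since $0\leq j_{r}/j_{1}\leq 1$ for $2\leq r\leq R$ we have $\t\in T_{n}$, and $H_{\hat{F}_{y,\J}}(\hat\X)$ — which depends on $\J$ only through $\t$ — is exactly the $(n-1)\times(n-1)$ Hessian appearing in (\ref{determinantbound}). Fixing $\tau_{0}:=\min_{r,\epsilon,\nu}\tau_{(r;\epsilon;\nu)}$, we get from (\ref{determinantbound}) and Lemma \ref{boundedfy} constants $c_{1},c_{4}>0$, fixed once $\tau_{0}$ is (hence independent of the later choice of $\tau\leq\tau_{0}$), such that $\abs{\det H_{\hat{F}_{y,\J}}(\hat\X)}\geq c_{1}$ and $\norm{H_{\hat{F}_{y,\J}}(\hat\X)}\leq c_{4}$ for all $\hat\X\in B_{2\tau_{0}}(\hat{\X}_{0})$, $y\in\overline{\mathscr{Y}}$ and $\t\in T_{n}$. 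As these matrices are symmetric, each of their eigenvalues has modulus $\geq c_{3}:=c_{1}c_{4}^{-(n-2)}>0$; equivalently their smallest singular value is $\geq c_{3}$, uniformly. This alone does \emph{not} force $A$ to be invertible — an average of equi-invertible symmetric matrices may be singular — and resolving this is the one real difficulty; it is handled by exploiting the freedom to shrink $\tau$.

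Indeed, the second partial derivatives of $t_{1}f_{1}+\cdots+t_{R}f_{R}$ are continuous, hence uniformly continuous on the fixed compact set $\overline{B_{2\tau_{0}}(\hat{\X}_{0})}\times\overline{\mathscr{Y}}\times T_{n}$, and depend affinely on the parameter; so, $\tau\leq\tau_{0}$ having been taken small enough for the present lemma, we may assume
\[\norm{H_{\hat{F}_{y,\J}}(\hat\X)-H_{\hat{F}_{y,\J}}(\hat{\X}_{0})}<\tfrac{c_{3}}{2}\qquad\text{for all }\hat\X\in B_{3\tau/2}(\hat{\X}_{0}),\ y\in\overline{\mathscr{Y}},\ \t\in T_{n}.\]
Every point $\xcrit+t(\hat\X-\xcrit)$ lies in $B_{3\tau/2}(\hat{\X}_{0})$, so integrating over $t\in[0,1]$ gives $\norm{A-H_{0}}<\tfrac{c_{3}}{2}$, where $H_{0}:=H_{\hat{F}_{y,\J}}(\hat{\X}_{0})$ is independent of $\hat\X$ and $\xcrit$ and satisfies $\norm{H_{0}^{-1}}\leq c_{3}^{-1}$. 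Writing $A=H_{0}\bigl(I+H_{0}^{-1}(A-H_{0})\bigr)$ with $\norm{H_{0}^{-1}(A-H_{0})}<\tfrac{1}{2}$, a Neumann series shows that $A$ is invertible with $\norm{A^{-1}}\leq 2c_{3}^{-1}$. This bound depends only on $c_{1}$, $c_{4}$ and $n$ — hence only on $\mathscr{M}$ and $\tau$ — and in particular is independent of $\J$, $\hat{\K}$ and $y$, which completes the proof.
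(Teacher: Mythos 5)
Your proof is correct, and it takes a noticeably different (and in one respect more careful) route than the paper's. The paper expands $\nabla \hat{F}_{y,\J}(\hat{\X})-\nabla \hat{F}_{y,\J}(\hat{\z})=H_{\hat{F}_{y,\J}}(\hat{\z})(\hat{\X}-\hat{\z})+O(|\hat{\X}-\hat{\z}|^{2})$ by Taylor's theorem, bounds $\lambda_{\min}$ below via continuity of eigenvalues together with the determinant bound, and thereby obtains $|\hat{\X}-\hat{\z}|\leq C|\nabla \hat{F}_{y,\J}(\hat{\X})-\nabla \hat{F}_{y,\J}(\hat{\z})|$ only for $|\hat{\X}-\hat{\z}|\leq \eta$, before setting $\hat{\z}=\xcrit$; it does not explicitly treat points of $\mathscr{D}_{+}$ at distance greater than $\eta$ from $\xcrit$. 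You instead use the exact integral form $\nabla\varphi_{y}(\hat{\X})=A(\hat{\X}-\xcrit)$ with $A$ the Hessian averaged along the segment, correctly flag that equi-invertibility of the integrands does not by itself make $A$ invertible, and resolve this by a Neumann-series perturbation around $H_{\hat{F}_{y,\J}}(\hat{\X}_{0})$ after shrinking $\tau$ --- a freedom the paper explicitly reserves when it chooses $\tau$ ``sufficiently small such that Lemma \ref{nonstatgradientbound} is applicable.'' Your explicit spectral gap $c_{1}c_{4}^{-(n-2)}$ also replaces the paper's softer continuity-of-eigenvalues compactness argument with a quantitative bound. The trade-off: the paper's argument is shorter and does not consume the smallness of $\tau$, while yours yields a bound valid uniformly on all of $\mathscr{D}_{+}$ with constants that are transparently independent of $\J$, $\hat{\K}$ and $y$, which is exactly what the applications of Lemma \ref{stat} require.
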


\begin{proof}
By definition of $\xcrit$ as a preimage of $\nabla \hat{F}_{y, \J}$ we have
\[\frac{|\hat{\X}-\xcrit|}{|\nabla \varphi_{y}(\hat{\X})|}= \frac{|\hat{\X}-\xcrit|}{|\nabla \hat{F}_{y, \J}(\hat{\X})-\frac{\hat{\K}}{j_{1}}|}=\frac{|\hat{\X}-\xcrit|}{|\nabla \hat{F}_{y, \J}(\hat{\X})-\nabla \hat{F}_{y, \J}(\xcrit)|}.\]
Now for any distinct $\hat{\X}, \hat{\z}\in \overline{\mathscr{D}_{+}}$ we know by Taylor's theorem that
\[\nabla \hat{F}_{y, \J}(\hat{\X})-\nabla \hat{F}_{y, \J}(\hat{\z})=H_{\hat{F}_{y, \J}}(\hat{\z})(\hat{\X}-\hat{\z})+O(|\hat{\X}-\hat{\z}|^{2}),\]
where the implicit constant does not depend on $\J$. Considering the eigenvalues of the invertible real symmetric matrix $H_{\hat{F}_{y, \J}}$ we have that
\[\lambda_{\min}|\hat{\X}-\hat{\z}|\ll |H_{\hat{F}_{y, \J}}(\hat{\z})(\hat{\X}-\hat{\z})|,\]
where $\lambda_{\min}$ is the minimum of the absolute values of the eigenvalues and the implicit constant depends only on $n$. We already showed that $|\det H_{\hat{F}_{y, \J}}|$ is bounded away from zero on $\overline{\mathscr{D}_{+}}$ hence by virtue of the eigenvalues being continuous in the coefficients of the matrix we find constants $C, \eta>0$ such that
\[|\hat{\X}-\hat{\z}|\leq C|\nabla \hat{F}_{y, \J}(\hat{\X})-\nabla \hat{F}_{y, \J}(\hat{\z})|\]
for all $|\hat{\X}-\hat{\z}|\leq \eta$. Here $C, \eta$ are independend of $\J$. Now in particular choosing $\hat{\z}=\xcrit$ yields the desired result.
\end{proof}

Following similar arguments as in \ref{phasederivbound} we find that for $i_{1}, ..., i_{n-1}\in \Z_{\geq 0}$ with $\sum_{\mu=1}^{n-1}{i_{\mu}}\leq \ell$. Then for all $\hat{\X}\in \mathscr{D}_{+}$ we have
\[\left|\frac{\del^{i_{1}+\cdots + i_{n-1}}\varphi_{y}}{\del^{i_{1}}_{x_{1}}\cdots \del^{i_{n-1}}_{x_{n-1}}}(\hat{\X})\right|\ll 1,\]
where the implicit constant depends only on $(i_{1}, ..., i_{n-1})$, $\rho$, $\tau$, $\varepsilon_{1}$ and upper bounds for (the absolute values of) finitely many derivatives of $f_{r}$ on $\mathscr{D}_{+}\times \mathscr{Y}$ for $1\leq r\leq R$. Notice that scaling with $D_{1}(\hat{\K}, \J)$ is unnecessary here since for $\K\in \mathscr{K}_{\J; 3}$ this distance is bounded from above by $\rho$ and that the bounds are all independant of $\J$ and $\K$. By definition $H_{\varphi_{y}}=H_{\hat{F}_{y, \J}}$, hence we can apply \ref{stat} for $\varphi=\varphi_{y},$ and $\lambda$ as above together with (\ref{determinantbound}) to obtain
\begin{align*}
I(q; \J, \K)&\ll \int_{\mathscr{Y}}{\abs{\det H_{\hat{F}_{y, \J}}(\xcrit)}^{-\frac{1}{2}}(qj_{1})^{-\frac{n-1}{2}-1}\D y}\\
&\ll_{c_{1}, \varepsilon_{1}} (qj_{1})^{-\frac{n}{2}-\frac{1}{2}},
\end{align*}
where the implicit constant only depends on $\ell, n, \varepsilon_{1}$, upper bounds for the absolute values of finitely many derivatives of $\omega$ and $\varphi_{y}$ on $\mathscr{D}_{+}$, an upper bound for $|\X-\textbf{v}_{0}|/|\nabla\varphi_{y}(\textbf{v}_{0})|$ on $\mathscr{D}_{+}$, and a lower bound for $\det H_{\hat{F}_{y, \J}}(\xcrit)$, all of which are independant of $\J$ and $\K$. We obtain
\begin{align*}
\sum_{\substack{(\hat{\K}, k_{n})\in \mathscr{K}(\J, 3)}}{I(q; \J, (\hat{\K}, k_{n}))}&\ll \sum_{\substack{(\hat{\K}, k_{n})\in \mathscr{K}(\J, 3)\\ }}{(qj_{1})^{-\frac{n}{2}-\frac{1}{2}}}\ll 2j_{1}L\sum_{\substack{\hat{\K}\in \Z^{n-1}\\D(\hat{\K}, \J)< \rho}}{(qj{_1})^{-\frac{n}{2}-\frac{1}{2}}}\\
&\ll 2j_{1}\sum_{\substack{\hat{\K}\in \Z^{n-1}\\ (\hat{\K}/j_{1})\in [-L-\rho, L+\rho]^{n-1}
}}{(qj_{1})^{-\frac{n}{2}-\frac{1}{2}}}\\
&\ll q^{-\frac{n}{2}-\frac{1}{2}} j_{1}^{\frac{n}{2}-\frac{1}{2}},
\end{align*}
where the implicit constant only depends additionally on $L, \rho$ and $n$. Arguing similarly to (\ref{case2}) we obtain
\begin{align}\label{case3}
N_{3}&\ll \sum_{\substack{1\leq j_{1}\leq J\\ 0\leq j_{2}, ..., j_{R}\leq j_{1}\\ \J/j_{1}\in \tilde{T}_{n}}}{\left(\prod_{r=1}^{R}{\left(\frac{1}{J}+\min\left(\delta, \frac{1}{j_{r}}\right)\right)}\right)j_{1}^{\frac{n}{2}-\frac{1}{2}}\sum_{q\leq Q}q^{\frac{n}{2}-\frac{1}{2}}}\\
&\ll \sum_{\substack{1\leq j_{1}\leq J\\ 0\leq j_{2}, ..., j_{R}\leq j_{1}}}{\left(\prod_{r=1}^{R}{\left(\frac{1}{J}+\min\left(\delta, \frac{1}{j_{r}}\right)\right)}\right)J^{\frac{n}{2}-\frac{1}{2}}Q^{\frac{n}{2}+\frac{1}{2}}}\nonumber\\
&\ll J^{\frac{n}{2}-\frac{1}{2}}Q^{\frac{n}{2}+\frac{1}{2}}(1+\log J)^{R}.\nonumber
\end{align} 

\paragraph{Case $\K\in \mathscr{K}_{\J; 1}$.} Choose $\varphi_{y}$ and $\lambda$ as in the previous case, such that we still have
\[\nabla\varphi_{y}(\xcrit)= \nabla\hat{F}_{y, \J}(\xcrit)-\frac{\hat{\K}}{j_{1}}\]
and \ref{nonstatgradientbound} still applies. The signature $\sigma$ of the matrix $H_{\varphi_{y}}(\hat{\X}_{\J; \K})=H_{\hat{F}_{y, \J}}(\hat{\X}_{\J; \K})$ is constant for all $\J, \K$ and $y$ in consideration, since the determinant is bounded away from zero, the eigenvalues of a matrix depend continuously on its coefficients and the coefficients depend continously on $y$. Applying Lemma \ref{stat} again yields
\begin{equation}\label{Integraly}
I(q;\J;\K)\ll \left|\int_{\R}{(qj_{1})^{-\frac{n}{2}+\frac{1}{2}}\frac{\omega(\xcrit, y)}{|\det H_{\hat{F}_{y, \J}}(\xcrit)|^{\frac{1}{2}}}e\left(qj_{1}\varphi_{y}(\xcrit)+\frac{\sigma}{8}\right)\D y}\right|+(qj_{1})^{-\frac{n}{2}-\frac{1}{2}}.
\end{equation}
Since all of $\omega, \varphi_{y}$ and $\xcrit$ depend on $y$, we delay evaluating the integral for now and consider the terms
\begin{align*}
&N_{1, y, \J}\\
&=\left| \sum_{\K\in \mathscr{K}_{\J; 1}}{\sum_{q\leq Q}{q^{n}\left((qj_{1})^{-\frac{n}{2}+\frac{1}{2}}\frac{\omega(\xcrit, y)}{|\det H_{\hat{F}_{y, \J}}(\xcrit)|^{\frac{1}{2}}}e\left(qj_{1}\varphi_{y}(\xcrit)+\frac{\sigma}{8}\right)\right) }}\right|\\
&+\bigO{\sum_{\K\in \mathscr{K}_{\J; 1}}{\sum_{q\leq Q}{q^{n}(qj_{1})^{-\frac{n}{2}-\frac{1}{2}}}}}\\
&=\left| \sum_{\K\in \mathscr{K}_{\J; 1}}{\sum_{q\leq Q}{q^{n}\left((qj_{1})^{-\frac{n}{2}+\frac{1}{2}}\frac{\omega(\xcrit, y)}{|\det H_{\hat{F}_{y, \J}}(\xcrit)|^{\frac{1}{2}}}e\left(qj_{1}\varphi_{y}(\xcrit)+\frac{\sigma}{8}\right)\right) }}\right|\\
&+\bigO{j_{1}^{\frac{n}{2}-\frac{1}{2}}Q^{\frac{n}{2}+\frac{1}{2}}}
\end{align*}
and
\[N_{1, y}=\sum_{\substack{1\leq j_{1}\leq J\\0\leq j_{2}, ..., j_{R}\leq j_{1}\\ (\J/j_{1})\in T_{n}}}{\left(\prod_{r=1}^{R}{b_{j_{r}}}\right)N_{1, y, \J}}.\]
We start with the inner most sum
\begin{align}\label{qsum}
&\left|\sum_{q\leq Q}{q^{n}\left((qj_{1})^{-\frac{n}{2}+\frac{1}{2}}\frac{\omega(\hat{\X}_{\J; \hat{\K}}, y)}{|\det H_{\hat{F}_{y, \J}}(\hat{\X}_{\J; \hat{\K}})|^{\frac{1}{2}}}e\left(qj_{1}\varphi_{y}(\hat{\X}_{\J; \hat{\K}})+\frac{\sigma}{8}\right)\right)}\right|\\
&\quad \leq \frac{\omega(\xcrit, y)}{|\det H_{\hat{F}_{y, \J}}(\xcrit)|^{\frac{1}{2}}} j_{1}^{-\frac{n}{2}+\frac{1}{2}}\left|\sum_{q\leq Q}{\left(q^{\frac{n}{2}+\frac{1}{2}} e\left(qj_{1}\varphi_{y}(\xcrit) \right)\right)}\right|\nonumber\\
&\quad\ll \frac{\omega(\xcrit, y)}{|\det H_{\hat{F}_{y, \J}}(\xcrit)|^{\frac{1}{2}}} j_{1}^{-\frac{n}{2}+\frac{1}{2}} \left|\sum_{q\leq Q}{\left(q^{\frac{n}{2}+\frac{1}{2}}e(qj_{1}\varphi_{y}(\xcrit))\right)}\right|.\nonumber
\end{align}
The remaining sum over $q$ can be dealt with by means of partial summation
\begin{align*}\sum_{q\leq Q}{q^{\frac{n}{2}+\frac{1}{2}}e(qj_{1}\varphi_{y}(\xcrit))}&=Q^{\frac{n}{2}+\frac{1}{2}}\sum_{q\leq Q}{e(qj_{1}\varphi_{y}(\xcrit))}\\
&-\int_{1}^{Q}{\sum_{q\leq \xi}{e(qj_{1}\varphi_{y}(\xcrit))}\left(\frac{n}{2}+\frac{1}{2}\right)\xi^{\frac{n}{2}-\frac{1}{2}}\D \xi}
\end{align*}
Note that we have the bound
\[\left|\sum_{q\leq Q}{e(qj_{1}\varphi_{y}(\xcrit))}\right|\ll\min\{Q, ||j_{1}\varphi_{y}(\xcrit)||^{-1}\},\]
so we distinguish two cases. First if $||j_{1}\varphi_{y}(\xcrit)||\geq Q^{-1}$ we obtain
\begin{align}\label{qsumcase1}
\sum_{q\leq Q}{q^{\frac{n}{2}+\frac{1}{2}}e(qj_{1}\varphi_{y}(\xcrit))}&\ll \frac{Q^{\frac{n}{2}+\frac{1}{2}}}{||j_{1}\varphi_{y}(\xcrit)||}+\frac{1}{||j_{1}\varphi_{y}(\xcrit)||}\int_{1}^{Q}{\left(\frac{n}{2}+\frac{1}{2}\right)\xi^{\frac{n}{2}-\frac{1}{2}}\D \xi}\\
&\ll \frac{Q^{\frac{n}{2}+\frac{1}{2}}}{||j_{1}\varphi_{y}(\xcrit)||}.\nonumber
\end{align}
On the other hand if $||j_{1}\varphi_{y}(\xcrit)||< Q^{-1}$we have
\begin{align}\label{qsumcase2}
\sum_{q\leq Q}{q^{\frac{n}{2}+\frac{1}{2}}e(qj_{1}\varphi_{y}(\xcrit))}&\ll Q^{\frac{n}{2}+\frac{3}{2}}+\left(\frac{n}{2}+\frac{1}{2}\right)\int_{1}^{Q}{\xi^{\frac{n}{2}+\frac{1}{2}}\D \xi}\\
&\ll Q^{\frac{n}{2}+\frac{3}{2}}.\nonumber
\end{align}
Hence we obtain
\begin{align}\label{N1y}
N_{1, y} &\ll \sum_{\substack{1\leq j_{1}\leq J\\ 0\leq j_{2}, ..., j_{R}\leq j_{1}}}{\left(\prod_{r=1}^{R}{b_{j_{r}}}\right) \sum_{\substack{\K\in \mathscr{K}_{\J; 1} \\ ||j_{1}\varphi_{y}(\xcrit)||\geq Q^{-1}}}{\omega(\xcrit)j_{1}^{-\frac{n}{2}+\frac{1}{2}}Q^{\frac{n}{2}+\frac{1}{2}}||j_{1}\varphi_{y}(\xcrit)||^{-1}} }\\
&+\sum_{\substack{1\leq j_{1}\leq J\\ 0\leq j_{2}, ..., j_{R}\leq j_{1}}}{\left(\prod_{r=1}^{R}{b_{j_{r}}}\right) \sum_{\substack{\K\in \mathscr{K}_{\J; 1} \\ ||j_{1}\varphi_{y}(\xcrit)||< Q^{-1}}}{\omega(\xcrit)j_{1}^{-\frac{n}{2}+\frac{1}{2}}Q^{\frac{n}{2}+\frac{3}{2}}} } \nonumber\\
&+ \sum_{\substack{1\leq j_{1}\leq J\\ 0\leq j_{2}, ..., j_{R}\leq j_{1}}}{\left(\prod_{r=1}^{R}{b_{j_{r}}}\right) \sum_{\K\in \mathscr{K}_{\J; 1}}{j_{1}^{-\frac{n}{2}-\frac{1}{2}}Q^{\frac{n}{2}+\frac{1}{2}}} } \nonumber\\
&\ll_{L} Q^{\frac{n}{2}+\frac{1}{2}}\sum_{\substack{1\leq j_{1}\leq J\\ 0\leq j_{2}, ..., j_{R}\leq j_{1}}}{\left(\prod_{r=1}^{R}{b_{j_{r}}}\right) \sum_{\substack{\K \in\mathscr{K}_{\J; 1}  \\ ||j_{1}\varphi_{y}(\xcrit)||\geq Q^{-1}}}{\omega(\xcrit)j_{1}^{-\frac{n}{2}+\frac{1}{2}}||j_{1}\varphi_{y}(\xcrit)||^{-1}} } \nonumber\\
&+Q^{\frac{n}{2}+\frac{3}{2}}\sum_{\substack{1\leq j_{1}\leq J\\ 0\leq j_{2}, ..., j_{R}\leq j_{1}}}{\left(\prod_{r=1}^{R}{b_{j_{r}}}\right) \sum_{\substack{\K\in\mathscr{K}_{\J; 1}  \\ ||j_{1}\varphi_{y}(\xcrit)||< Q^{-1}}}{\omega(\xcrit)j_{1}^{-\frac{n}{2}+\frac{1}{2}}} } \nonumber\\
&+ \sum_{\substack{1\leq j_{1}\leq J\\ 0\leq j_{2}, ..., j_{R}\leq j_{1}}}{\left(\prod_{r=1}^{R}{b_{j_{r}}}\right)j_{1}^{\frac{n}{2}-\frac{1}{2}}Q^{\frac{n}{2}+\frac{1}{2}}} \nonumber
\end{align}
The last term can be bounded similarly to (\ref{case2}) and (\ref{case3}). We have the following essential result to be proven in Section \ref{secess}.

\begin{prop}\label{essential}
Let $T>0$ and $J_{2}, ..., J_{R}\in [1, J]$. Then with the notations of this section and for all $y\in \overline{\mathscr{Y}}$, we have
\[\sum_{\substack{1\leq j_{1}\leq J\\0\leq j_{r}\leq \min\{J_{r}, j_{1}\} \\2\leq r\leq R}}{\sum_{\substack{\K\in \Z^{n}\\ ||j_{1}\varphi_{y}(\xcrit)||<T^{-1}}}{\omega(\xcrit)}}\ll \left(\prod_{r=2}^{R}{J_{r}}\right)(J^{n+1}T^{-1}+J^{n}\mathscr{E}_{n-1}(J)),\]
where
\[\mathscr{E}_{n-1}(J)=\mathscr{E}_{n-1}^{(\idl{c}_{1}'; \idl{c}_{2}')}=\left\{\begin{array}{*{2}{c}} \exp(\idl{c}'_{1}\sqrt{\log J})& \mbox{if $n=3$}\\ (\log J)^{\idl{c}'_{2}}& \mbox{if $n\geq 4$}\end{array}\right.\]
for some positive constants $\idl{c}'_{1}$ and $\idl{c}'_{2}$. Here the implicit constant as well as $\idl{c}'_{1}$ and $\idl{c}'_{2}$ only depend on $n, R$, $c_{1}$ and $c_{2}$ in (\ref{determinantbound}), $\rho$ in (\ref{rho}), $\rho'$ in (\ref{rho'}) and upper bounds for (the absolute value) of finitely many derivatives of $\omega$ and $f_{r}$ for $2\leq r\leq R$ on $\mathscr{D}_{+}\times \mathscr{Y} $. In particular, they are independant of $T$, $J_{2}, ..., J_{R}$  and $y$.
\end{prop}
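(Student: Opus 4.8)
The plan is to use the Legendre transform to recast $\|j_{1}\varphi_{y}(\xcrit)\|$ in terms of the dual function $\hat{F}^{*}_{y,\J}$, to note that this dual function inherits a non‑vanishing Gaussian curvature condition from \eqref{determinantbound}, and then to invoke Huang's counting theorem \cite{Hua1} for a hypersurface of dimension $n-1$, absorbing the two surplus degrees of freedom --- the coordinate $k_{n}$ and the auxiliary indices $j_{2},\dots,j_{R}$ --- by trivial summation.

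First I would rewrite the summand. Since $\xcrit=(\nabla\hat{F}_{y,\J})^{-1}(\hat{\K}/j_{1})$, we have $\hat{\K}/j_{1}=\nabla\hat{F}_{y,\J}(\xcrit)$, so \eqref{legendrepoint} gives
\[
j_{1}\varphi_{y}(\xcrit)=j_{1}\hat{F}_{y,\J}(\xcrit)-\hat{\K}\cdot\xcrit-k_{n}y=-\,j_{1}\hat{F}^{*}_{y,\J}\!\left(\tfrac{\hat{\K}}{j_{1}}\right)-k_{n}y,
\]
and hence $\|j_{1}\varphi_{y}(\xcrit)\|=\bigl\|j_{1}\hat{F}^{*}_{y,\J}(\hat{\K}/j_{1})+k_{n}y\bigr\|$. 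By \eqref{legendrematrix} we have $H_{\hat{F}^{*}_{y,\J}}=H_{\hat{F}_{y,\J}}^{-1}$ at corresponding points, and \eqref{determinantbound} bounds $|\det H_{\hat{F}_{y,\J}}|$ both above and away from zero, uniformly in $y\in\overline{\mathscr{Y}}$ and in all admissible $\J$ (those whose ratios $j_{r}/j_{1}$ lie in $T_{n}$); hence the graph of $\hat{F}^{*}_{y,\J}$ over $V_{y,\J}$ is a hypersurface in $\R^{n}$ of Gaussian curvature bounded away from zero, uniformly over the compact family parametrised by $y$ and $(j_{2}/j_{1},\dots,j_{R}/j_{1})$. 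Finally $\omega(\xcrit,y)\ne 0$ forces $\xcrit\in U_{y}\subseteq B_{\kappa}(\hat{\X}_{0})$, so $\hat{\K}/j_{1}\in V_{y,\J}\subseteq[-L,L]^{n-1}$, i.e.\ $\hat{\K}$ runs over a box of side $\ll j_{1}L\ll JL$, while --- as throughout this section, cf.\ the definition of $\mathscr{K}_{\J;1}$ --- $|k_{n}|\le 2j_{1}L\ll JL$.

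Next I would strip off the cheap variables: there are $\ll\prod_{r=2}^{R}J_{r}$ admissible tuples $(j_{2},\dots,j_{R})$, and for each fixed $j_{1}$ at most $4j_{1}L+1\ll J$ values of $k_{n}$. Fixing these and writing $\beta:=-k_{n}y\bmod 1$ for the ensuing additive shift, it suffices to prove, for every fixed $y$, every fixed $(j_{2},\dots,j_{R})$ and every fixed $\beta$, that
\[
\Sigma'\ :=\ \sum_{1\le j_{1}\le J}\ \sum_{\substack{\hat{\K}\in\Z^{n-1}\\ \|j_{1}\hat{F}^{*}_{y,\J}(\hat{\K}/j_{1})-\beta\|<T^{-1}}}\omega(\xcrit,y)\ \ll\ T^{-1}J^{n}+J^{n-1}\mathscr{E}_{n-1}(J),
\]
since multiplying the right side by the $\ll\prod J_{r}$ choices of $(j_{2},\dots,j_{R})$ and the $\ll J$ choices of $k_{n}$ reproduces the claimed bound. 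Now $\Sigma'$ is a weighted count of rational points $\hat{\K}/j_{1}$ of denominator $j_{1}\le J$ lying within $T^{-1}/j_{1}$ of the shifted graph of $\hat{F}^{*}_{y,\J}$; since that graph has non‑vanishing Gaussian curvature this is exactly the situation of Huang's theorem \cite{Hua1}, here for a hypersurface of dimension $n-1$ in $\R^{n}$ (the hypothesis $n\ge 3$ guaranteeing $n-1\ge 2$), with the required smoothness supplied by our assumption on $\ell$. The shift $\beta$ is harmless, since Huang's argument controls --- denominator by denominator --- the exponential sums $\sum_{\hat{\K}}e\bigl(h\,j_{1}\hat{F}^{*}_{y,\J}(\hat{\K}/j_{1})\bigr)$ with $h\ne 0$, whose moduli $\beta$ does not affect. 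This gives $\Sigma'\ll T^{-1}J^{n}+J^{n-1}\mathscr{E}_{n-1}(J)$, with the factor $\mathscr{E}_{n-1}(J)$ --- namely $\exp(\idl{c}'_{1}\sqrt{\log J})$ if $n=3$ and $(\log J)^{\idl{c}'_{2}}$ if $n\ge 4$ --- coming verbatim from Huang's error term for a hypersurface in $\R^{n}$, and with all implicit constants depending only on $n$, $R$, the constants $c_{1},c_{2}$ of \eqref{determinantbound}, $\rho$ of \eqref{rho}, $\rho'$ of \eqref{rho'}, and finitely many derivative bounds of $\omega$ and of $f_{2},\dots,f_{R}$ on $\mathscr{D}_{+}\times\mathscr{Y}$; in particular uniform in $T$, in $J_{2},\dots,J_{R}$ and in $y$.

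The one step that needs genuine work --- the content of Section \ref{secess} --- is that $\hat{F}^{*}_{y,\J}$ really does depend on $j_{1}$ through the ratios $j_{r}/j_{1}$, so one cannot quote the statement of \cite{Hua1} verbatim for the sum over $j_{1}$. The remedy is that Huang's proof handles each denominator essentially in isolation --- Poisson summation in $\hat{\K}$, stationary phase, and a lattice‑point (geometry‑of‑numbers) estimate on the minor arcs --- and that all of its bounds are uniform over a compact family of functions with Gaussian curvature bounded away from zero and over the additive shift; re‑running that argument with $\hat{F}^{*}_{y,\J}$ in place of a fixed function and tracking this uniformity (here $\rho'$ in \eqref{rho'} plays for $\nabla\hat{F}_{y,\J}$ the role that $\rho$ in \eqref{rho} did before) delivers the bound on $\Sigma'$. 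I expect the delicate part to be precisely this uniform re‑derivation of the minor‑arc estimate --- the source of the sub‑polynomial factor $\mathscr{E}_{n-1}$ --- the rest being bookkeeping.
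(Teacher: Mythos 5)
Your reduction is sound as far as it goes: the identity $j_{1}\varphi_{y}(\xcrit)=-j_{1}\hat{F}^{*}_{y,\J}(\hat{\K}/j_{1})-k_{n}y$ is correct, the trivial summation over $k_{n}$ and over $(j_{2},\dots,j_{R})$ loses exactly the factors $J$ and $\prod_{r=2}^{R}J_{r}$ that the target bound allows, and the curvature of the dual graph is indeed controlled via (\ref{legendrematrix}) and (\ref{determinantbound}). The gap is in the one step you defer: the claim that Huang's theorem (or its proof) applies to $\Sigma'$ because ``Huang's proof handles each denominator essentially in isolation.'' It does not. The error term $D^{n-1}\mathscr{E}_{n-1}(D)$ in \cite{Hua1} represents an average saving of a full factor of $D$ per denominator over the trivial count $D^{n-1}$, and this saving comes precisely from cancellation in the sum over denominators (an exponential sum of length $D$ in the denominator variable, estimated after the duality step); for a single dilate of a curved hypersurface no bound of strength $d^{n-2+o(1)}$ is available. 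Since in your $\Sigma'$ the function $\hat{F}^{*}_{y,\J}$ changes with the denominator $j_{1}$, the phase $e(hj_{1}\hat{F}^{*}_{y,\J}(\hat{\K}/j_{1}))$ is not of the form required for that cancellation, and ``re-running Huang's argument uniformly over the family'' is not a routine matter of tracking constants --- it is exactly the difficulty the proposition exists to resolve, and your proposal does not resolve it.

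The paper's route around this is different and worth contrasting with yours. Rather than applying Huang's theorem to the $\J$-dependent Legendre transforms, one performs one further round of duality: majorize the indicator by the Fej\'er kernel of length $D=\lfloor T/2\rfloor$, apply Poisson summation in $\hat{\a}$ and stationary phase. The resulting main term has phase $e(-d(j_{1}\hat{f}_{1,y}+\cdots+j_{R}\hat{f}_{R,y})(\hat{\K}/d))$, i.e.\ the rational points now have denominator $d$ (the Fej\'er frequency) and the function in the oscillation is the \emph{fixed} slice function $\hat{f}_{1,y}$; the entire $\J$-dependence has migrated into the smooth amplitude $\Psi_{\hat{\K};d}$, which is stripped off by partial summation in $j_{1}$. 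The remaining count of pairs $(d,\hat{\K})$ with $\|d\hat{f}_{1,y}(\hat{\K}/d)\|$ small is then exactly the statement of \cite[Theorem 2]{Hua1} for the fixed function $\hat{f}_{1,y}$ with $D$ in the role of $Q$, which is where $\mathscr{E}_{n-1}$ enters. If you want to salvage your outline, you must replace the ``denominator by denominator'' appeal with this transference (or an equivalent argument that genuinely exploits the $j_{1}$-average for a $j_{1}$-dependent family).
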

Recall that $b_{j}\ll \frac{1}{j}$ for $1\leq j\leq J$. Hence with $\idl{I}_{0}=\{0\}$ and $\idl{I}_{s}=[2^{s-1}, 2^{s}]$ it follows that
\begin{align}
&\sum_{\substack{1\leq j_{1}\leq J\\0\leq j_{2}, ..., j_{R}\leq j_{1}}}{\left(\prod_{r=1}^{R}{b_{j_{r}}}\right)j_{1}^{-\frac{n}{2}+\frac{1}{2}}\sum_{\substack{\K\in \mathscr{K}_{\J; 1}\\||j_{1}\varphi_{y}(\xcrit)||<T^{-1}}}{\omega(\xcrit)}}\\
&\ll \sum_{0\leq s_{2}, ..., s_{R}\leq \frac{\log J}{\log 2}+1}{\left(\prod_{r=2}^{R}{2^{-s_{r}}}\right)\sum_{\substack{1\leq j_{1}\leq J\\ j_{r}\in \idl{I}_{s_{r}}\cap [0, j_{1}]\\ 2\leq r\leq R}}{j_{1}^{-\frac{n}{2}-\frac{1}{2}}}\sum_{\substack{\K\in \mathscr{K}_{\J; 1}\\||j_{1}\varphi_{y}(\xcrit)||<T^{-1}}}{\omega(\xcrit) } }\nonumber\\
&\ll \sum_{0\leq s_{2}, ..., s_{R}\leq \frac{\log J}{\log 2}+1}{\left(\prod_{r=2}^{R}{2^{-s_{r}}}\right)\sum_{\substack{1\leq j_{1}\leq J\\ 0\leq j_{r}\leq \min\{2^{s_{r}}, j_{1}\}\\ 2\leq r\leq R}}{j_{1}^{-\frac{n}{2}-\frac{1}{2}}}\sum_{\substack{\K\in \mathscr{K}_{\J; 1}\\||j_{1}\varphi_{y}(\xcrit)||<T^{-1}}}{\omega(\xcrit) } }.\nonumber
\end{align}
Now using partial summation and Proposition \ref{essential}, we find that for all $s_{r}$ in consideration
\begin{align}
&\sum_{\substack{1\leq j_{1}\leq J\\ 0\leq j_{r}\leq \min\{2^{s_{r}}, j_{1}\}\\ 2\leq r\leq R}}{j_{1}^{-\frac{n}{2}-\frac{1}{2}}}\sum_{\substack{\K\in \mathscr{K}_{\J; 1}\\||j_{1}\varphi_{y}(\xcrit)||<T^{-1}}}{\omega(\xcrit)}\\
&\qquad \ll J^{-\frac{n}{2}-\frac{1}{2}}\left(\prod_{r=2}^{R}{2^{s_{r}}}\right)(J^{n+1}T^{-1}+J^{n}\mathscr{E}_{n-1}(J)).\nonumber
\end{align}
Therefore
\begin{align}\label{N1parts}
&\sum_{\substack{1\leq j_{1}\leq J\\0\leq j_{2}, ..., j_{R}\leq j_{1}}}{\left(\prod_{r=1}^{R}{b_{j_{r}}}\right)j_{1}^{-\frac{n}{2}+\frac{1}{2}}\sum_{\substack{\K\in \mathscr{K}_{\J; 1}\\||j_{1}\varphi_{y}(\xcrit)||<T^{-1}}}{\omega(\xcrit)}}\\
&\ll (1+\log J)^{R}J^{-\frac{n}{2}-\frac{1}{2}}(T^{-1}J^{n+1}+J^{n}\mathscr{E}_{n-1}(J)).\nonumber
\end{align}
Now the second term in (\ref{N1y}) can be estimated by taking $T=Q$ in (\ref{N1parts}). For the first sum in (\ref{N1y}) we split the interval $[Q^{-1}, 1/2]$ into dyadic intervals. We may assume $Q\geq 2$, i.e. $Q^{-1}\leq 1/2$, and conclude 
\begin{align}\label{N1yfirst}
&\sum_{\substack{1\leq j_{1}\leq J\\ 0\leq j_{2}, ..., j_{R}\leq j_{1}}}{\left(\prod_{r=1}^{R}{b_{j_{r}}}\right) \sum_{\substack{\K \in\mathscr{K}_{\J; 1}  \\ ||j_{1}\varphi_{y}(\xcrit)||\geq Q^{-1}}}{\omega(\xcrit)j_{1}^{-\frac{n}{2}+\frac{1}{2}}||j_{1}\varphi_{y}(\xcrit)||^{-1}} }\\
&\qquad\leq \sum_{1\leq i\leq \frac{\log Q}{\log 2}+1}{Q2^{1-i}\sum_{\substack{1\leq j_{1}\leq J\\ 0\leq j_{2}, ..., j_{R}\leq j_{1}}}{\left(\prod_{r=1}^{R}{b_{j_{r}}}\right) \sum_{\substack{\K \in\mathscr{K}_{\J; 1}  \\ \frac{2^{i-1}}{Q}\leq ||j_{1}\varphi_{y}(\xcrit)||\leq \frac{2^{i}}{Q}}}{\omega(\xcrit)j_{1}^{-\frac{n}{2}+\frac{1}{2}}} }}\nonumber\\
&\qquad \ll  \sum_{1\leq i\leq\frac{\log Q}{\log 2}+1}{Q2^{1-i}(1+\log J)^{R}J^{-\frac{n}{2}-\frac{1}{2}}(2^{i}Q^{-1}J^{n+1}+J^{n}\mathscr{E}_{n-1}(J))} \nonumber\\
&\qquad \ll (1+\log J)^{R}((\log Q)J^{\frac{n}{2}+\frac{1}{2}}+QJ^{\frac{n}{2}-\frac{1}{2}}\mathscr{E}_{n-1}(J))\nonumber
\end{align}
using (\ref{N1parts}) again. Combining (\ref{N1y}), (\ref{N1parts}) and (\ref{N1yfirst}), we obtain
\begin{align}\label{N1yfinal}
N_{1, y}&\ll Q^{\frac{n}{2}+\frac{1}{2}} (1+\log J)^{R}((\log Q)J^{\frac{n}{2}+\frac{1}{2}}+QJ^{\frac{n}{2}-\frac{1}{2}}\mathscr{E}_{n-1}(J))\\
&+ Q^{\frac{n}{2}+\frac{3}{2}} (1+\log J)^{R}J^{-\frac{n}{2}-\frac{1}{2}}(Q^{-1}J^{n+1}+J^{n}\mathscr{E}_{n-1}(J))\nonumber\\
&+ (1+\log J)^{R}Q^{\frac{n}{2}+\frac{1}{2}}J^{\frac{n}{2}-\frac{1}{2}}\nonumber\\
&\ll (1+\log J)^{R}((\log Q)Q^{\frac{n}{2}+\frac{1}{2}}J^{\frac{n}{2}+\frac{1}{2}}+Q^{\frac{n}{2}+\frac{3}{2}}J^{\frac{n}{2}-\frac{1}{2}}\mathscr{E}_{n-1}(J)).\nonumber
\end{align}
Consequently we have
\begin{equation}\label{case1}
N_{1}\ll_{\varepsilon_{1}} (1+\log J)^{R}((\log Q)Q^{\frac{n}{2}+\frac{1}{2}}J^{\frac{n}{2}+\frac{1}{2}}+Q^{\frac{n}{2}+\frac{3}{2}}J^{\frac{n}{2}-\frac{1}{2}}\mathscr{E}_{n-1}(J)).
\end{equation}

\section{Proof of Proposition \ref{essential}}\label{secess}
Recall that we defined functions
\[\hat{f}_{j, y}\colon \R^{n-1}\rightarrow \R, \hat{\X}\mapsto f_{j}(\hat{\X}, y)\]
for $y\in \overline{\mathscr{Y}}$, $\varepsilon_{1}$ as in (\ref{determinantbound}), and
\[\hat{F}_{y, \J}=\hat{f}_{1, y}+\frac{j_{2}}{j_{1}}\hat{f}_{2, y}+\cdots + \frac{j_{R}}{j_{1}}\hat{f}_{R, y}\]
for $\J\in \R^{R}\setminus\{0\}$. For a non-negative weight function $\omega \in \mathcal{C}_{c}^{\infty}(\R^{n})$ we defined
\[U_{y}=\{\hat{\X}\in \R^{n-1}\mid \omega(\hat{\X}, y)\neq 0\}\]
and $V_{y, \J}=\nabla \hat{F}_{y, \J}$. Note that $\nabla\hat{F}_{y, \J}$ is a diffeomorphism on $U_{y}$. Now let $\omega_{\J}^{\ast}=\omega \circ (\nabla \hat{F}_{y, \J})^{-1}$, and for $T\geq 2$ and $J_{2}, ..., J_{R}\in [1, J]$, define
\begin{align}\label{newcp}
\mathscr{M}(J, T^{-1})&=\sum_{\substack{1\leq j_{1}\leq J\\ 0\leq j_{r}\leq \min\{J_{r}, j_{1}\}\\ 2\leq r\leq R}}{\sum_{\substack{\a\in \mathscr{K}_{\J; 1}\\||j_{1}\varphi_{y}(\hat{\X}_{\J; \hat{\a}})||\leq T^{-1}}}{\omega_{\J}^{\ast}\left(\frac{\hat{\a}}{j_{1}}\right)}}\\
&=\sum_{\substack{1\leq j_{1}\leq J\\ 0\leq j_{r}\leq \min\{J_{r}, j_{1}\}\\ 2\leq r\leq R}}{\sum_{\substack{\a\in \Z^{n}\\ |a_{n}|\leq 2j_{1}L\\||j_{1}\varphi_{y}(\hat{\X}_{\J; \hat{\a}})||\leq T^{-1}}}{\omega_{\J}^{\ast}\left(\frac{\hat{\a}}{j_{1}}\right)}}.\nonumber
\end{align}
Note that \ref{essential} for $0<T<2$ immediately follows from the case $T=2$. We consider the Fej\'{e}r kernel 
\begin{equation}\label{fejer}
\mathscr{F}_{D}(\theta)=D^{-2}\left| \sum_{d=1}^{D}{e(d\theta)}\right|^{2}=\left(\frac{\sin(\pi D\theta)}{D\sin (\pi\theta)}\right)^{2}=\sum_{d=-D}^{D}{\frac{D-|d|}{D^{2}}e(d\theta)}
\end{equation}
for $D=\subgauss{T/2}$. Let $\theta\in \R$ with $0< ||\theta||\leq T^{-1}$, then by the concave property of the sine function on $[0, \pi/2]$ we have
\[\left(\frac{\sin (D\pi \theta)}{D\sin(\pi\theta)}\right)^{2}\geq \left(\frac{2\pi^{-1}D\pi||\theta||}{D\pi ||\theta||}\right)\geq \frac{4}{\pi^{2}}.\]
Therefore, it follows that
\begin{equation}\label{fejerindicator}
\chi_{T^{-1}}(\theta)\leq \frac{\pi^{2}}{4} \mathscr{F}_{D}(\theta),
\end{equation}
with $\chi_{T^{-1}}$ as in (\ref{indi}). Combining (\ref{newcp}), (\ref{fejer}) and (\ref{fejerindicator}) we obtain
\begin{equation}\label{newsetup}
\mathscr{M}(J, T^{-1})\leq \frac{\pi^{2}}{4}\sum_{\substack{1\leq j_{1}\leq J\\ 0\leq j_{r}\leq \min\{J_{r}, j_{1}\}\\ 2\leq r\leq R}}{\sum_{\substack{\a\in \Z^{n}\\ |a_{n}|\leq 2j_{1}L}}{\sum_{d=-D}^{D}{\frac{D-|d|}{D^{2}} \omega_{\J}^{\ast}\left(\frac{\hat{\a}}{j_{1}}\right)e(dj_{1}\varphi_{y}(\hat{\X}_{\J; \hat{\a}})) }}}.
\end{equation}
By definition $\omega_{\J}^{\ast}$ vanishes outside of $\bigcup_{y\in\overline{\mathscr{Y}}}{V_{y, \J}}\subseteq [-L, L]^{n}$, hence the contribution of terms with $d=0$ in (\ref{newsetup}) is
\begin{equation}
\frac{\pi^{2}}{4D}\sum_{\substack{1\leq j_{1}\leq J\\ 0\leq j_{r}\leq \min\{J_{r}, j_{1}\}\\ 2\leq r\leq R}}{\sum_{\substack{\a\in \Z^{n}\\ |a_{n}|\leq 2j_{1}L}}{\omega_{\J}^{\ast}\left(\frac{\hat{\a}}{j_{1}}\right)}}\ll \frac{1}{D}\left(\prod_{r=2}^{R}{J_{r}}\right)\sum_{1\leq j_{1}\leq J}{j_{1}^{n}}\ll \frac{J^{n+1}}{D}\left(\prod_{r=2}^{R}{J_{r}}\right),
\end{equation}
where the implicit constants only depend on $n$ and $L$. Let $\hat{F}^{\ast}_{y, \J}$ be the Legendre transform of $\hat{F}_{y, \J}$. Then with (\ref{legendrepoint}) and since $\xcrita=(\nabla \hat{F}_{y, \J})^{-1}(\hat{a}/j_{1})$ we have
\[\hat{F}^{\ast}_{y, \J}\left(\frac{\hat{\a}}{j_{1}}\right)=\xcrita \cdot \frac{\hat{\a}}{j_{1}}-\hat{F}_{y, \J}(\xcrita)=-\varphi_{y}(\xcrita)-\frac{a_{n}y}{j_{1}}.\]
Now we can rewrite
\begin{align}
&\sum_{\substack{\a\in \Z^{n}\\ |a_{n}|\leq 2j_{1}L}}{\sum_{d=-D}^{D}{\frac{D-|d|}{D^{2}} \omega_{\J}^{\ast}\left(\frac{\hat{\a}}{j_{1}}\right)e(dj_{1}\varphi_{y}(\hat{\X}_{\J; \hat{\a}})) }}\\
&\qquad=\sum_{\substack{\a\in \Z^{n}\\ |a_{n}|\leq 2j_{1}L}}{\sum_{d=-D}^{D}{\frac{D-|d|}{D^{2}} \omega_{\J}^{\ast}\left(\frac{\hat{\a}}{j_{1}}\right)e\left(-dj_{1}\left(\frac{a_{n}y}{j_{1}}+\hat{F}^{\ast}_{y, \J}\left(\frac{\hat{\a}}{j_{1}}\right)\right)\right) }}\nonumber \\
&\qquad=\sum_{\substack{\a\in \Z^{n}\\ |a_{n}|\leq 2j_{1}L}}{\sum_{d=-D}^{D}{\frac{D-|d|}{D^{2}} \omega_{\J}^{\ast}\left(\frac{\hat{\a}}{j_{1}}\right)e(-da_{n}y)e\left(-dj_{1}\hat{F}^{\ast}_{y, \J}\left(\frac{\hat{\a}}{j_{1}}\right)\right) }}\nonumber\\
&\qquad=\sum_{|a_{n}|\leq 2j_{1}L}{\sum_{d=-D}^{D}{\frac{D-|d|}{D^{2}} e(-da_{n}y)\sum_{\hat{\a}\in \Z^{n-1}}{\omega_{\J}^{\ast}\left(\frac{\hat{\a}}{j_{1}}\right)e\left(-dj_{1}\hat{F}^{\ast}_{y, \J}\left(\frac{\hat{\a}}{j_{1}}\right)\right)}}}\nonumber,
\end{align}
and since
\[e\left(-dj_{1}\hat{F}_{y, \J}\left(\frac{\hat{\a}}{j_{1}}\right)\right)=\overline{e\left(dj_{1}\hat{F}_{y, \J}\left(\frac{\hat{\a}}{j_{1}}\right)\right)}, \quad e(-da_{n}y)=\overline{e(da_{n}y)}\]
we have
\begin{align}
&\left|\sum_{|a_{n}|\leq 2j_{1}L}{\sum_{1\leq |d|\leq D}{\frac{D-|d|}{D^{2}} e(da_{n}y)\sum_{\hat{\a}\in \Z^{n-1}}{\omega_{\J}^{\ast}\left(\frac{\hat{\a}}{j_{1}}\right)e\left(dj_{1}\hat{F}^{\ast}_{y, \J}\left(\frac{\hat{\a}}{j_{1}}\right)\right)}}}\right|\\
&\qquad\leq 2 \left|\sum_{|a_{n}|\leq 2j_{1}L}{\sum_{d=1}^{D}{\frac{D-d}{D^{2}} e(da_{n}y)\sum_{\hat{\a}\in \Z^{n-1}}{\omega_{\J}^{\ast}\left(\frac{\hat{\a}}{j_{1}}\right)e\left(dj_{1}\hat{F}^{\ast}_{y, \J}\left(\frac{\hat{\a}}{j_{1}}\right)\right)}}}\right|.\nonumber
\end{align}
Applying the $n$-dimensional Poisson summation formula to the inner most sum yields
\begin{align}
&\sum_{\hat{\a}\in \Z^{n-1}}{\omega_{\J}^{\ast}\left(\frac{\hat{\a}}{j_{1}}\right)e\left(dj_{1}\hat{F}^{\ast}_{y, \J}\left(\frac{\hat{\a}}{j_{1}}\right)\right)}\\
=&\sum_{\hat{\K}\in \Z^{n-1}}{\int_{\R^{n-1}}{\omega_{\J}^{\ast}\left(\frac{\hat{\z}}{j_{1}}\right)e\left(dj_{1}\hat{F}^{\ast}_{y, \J}\left(\frac{\hat{\z}}{j_{1}}\right)-\hat{\K} \cdot\hat{\z}\right)}\D \hat{\z}}\nonumber\\
=&j_{1}^{n-1}\sum_{\hat{\K}\in \Z^{n-1}}{I_{0}(d; \J; \hat{\K})}\nonumber
\end{align}
where
\[I_{0}(d; \J; \hat{\K})=\int_{\R^{n-1}}{\omega_{\J}^{\ast}(\hat{\X})e\left(dj_{1}\hat{F}^{\ast}_{y, \J}(\hat{\X})-j_{1}\hat{\K} \cdot\hat{\X}\right)\D \hat{\X}}.\]
Therefore to obtain a bound for $\mathscr{M}(J, T^{-1})$ it is sufficient to provide a bound for
\begin{equation}
\left|\sum_{\substack{1\leq j_{1}\leq J\\ 0\leq j_{r}\leq \min\{J_{r}, j_{1}\}\\ 2\leq r\leq R}}{\sum_{|a_{n}|\leq 2j_{1}L}{\sum_{d=1}^{D}{\frac{D-d}{D^{2}} e(da_{n}y)j_{1}^{n-1}\sum_{\hat{\a}\in \Z^{n-1}}{I_{0}(d; \J; \hat{\K})}}}}\right|.
\end{equation}
Since $\nabla \hat{F}_{y, \J}^{\ast}=(\nabla \hat{F}_{y, \J})^{-1}$ and $\nabla \hat{F}_{y, \J}$ is a diffeomorphism on $\overline{\mathscr{D}}$ we have that $\nabla \hat{F}^{\ast}_{y, \J}$ is a diffeomorphism on $\nabla \hat{F}_{y, \J}(\mathscr{D})$ and $\nabla \hat{F}_{y, \J}^{\ast}(V_{y, \J}^{+})=U_{y}^{+}$ . Let
\begin{equation}\label{rho'}
\rho'=\frac{\dist(\del \mathscr{D}, \del U_{y})}{2}.
\end{equation}
We repeat the technique of Section \ref{secbounds} and split the set $\Z^{n-1}$ into three disjoint subsets. Let
\[\mathscr{K}_{1}=\left\{\hat{\K}\in \Z^{n-1}\left| \frac{\hat{\K}}{d} \in U_{y}\right.\right\}, \]
\[\mathscr{K}_{2}=\left\{\hat{\K}\in \Z^{n-1}\left| \dist\left(\frac{\hat{\K}}{d}, U_{y}\right)\geq \rho'\right.\right\}\]
and
\[\mathscr{K}_{3}=\Z^{n-1}\backslash (\mathscr{K}_{1} \cup \mathscr{K}_{2}).\]
For each $1\leq i\leq 3$ we define
\begin{align}\label{Mi}
M_{i}&=\sum_{d=1}^{D}{\frac{D-d}{D^{2}}\left| \sum_{\substack{1\leq j_{1}\leq J\\ 0\leq j_{r}\leq \min\{J_{r}, j_{1}\\ 2\leq r\leq R}}{\sum_{|a_{n}|\leq 2j_{1}L}{ e(da_{n}y) j_{1}^{n-1} \sum_{\hat{\K}\in \mathscr{K}_{i}}{I_{0}(d; \J; \hat{\K})} } } \right|}\\
&\ll_{L} \sum_{\substack{1\leq j_{1}\leq J\\ 0\leq j_{r}\leq \min\{J_{r}, j_{1}\}\\ 2\leq r\leq R}}{\sum_{d=1}^{D}{\frac{D-d}{D^{2}}\left| j_{1}^{n}\sum_{\hat{\K}\in \mathscr{K}_{i}}{I_{0}(d; \J; \hat{\K})}\right|}}\nonumber,
\end{align}
such that
\begin{equation}\label{Mestiamte}
\mathscr{M}(J, T^{-1})\ll \left(\prod_{r=2}^{R}J_{r}\right) \frac{J^{n+1}}{D}+M_{1}+M_{2}+M_{3},
\end{equation}
and seek to bound each $M_{i}$ seperately. 
\begin{xrem}
Note that we deliberately choose to estimate the term $e(da_{n}y)$ in (\ref{Mi}) trivially. Considering its contribution could possibly lead to further results.
\end{xrem}
\paragraph{Case $\hat{\K}\in \mathscr{K}_{2}$.} Define
\[\varphi_{1}(\hat{\X})=\frac{d\hat{F}_{y, \J}^{\ast}(\hat{\X})-\hat{\K}\cdot \hat{\X}}{\dist(\hat{\K}, dU_{y})}\]
and
\[\lambda_{1}=j_{1}\dist(\hat{\K}, dU_{y}).\]
Then for all $\hat{\X}\in V_{y, \J}$
\[|\nabla \varphi_{1}(\hat{\X})|=\frac{|d\hat{F}_{y, \J}^{\ast}(\X)-\hat{\K}|}{\dist(\hat{\K}, U)}\geq 1\]
and like in  (\ref{rhodistance}) we conclude
\[|\nabla \varphi_{1}(\hat{\X})|\geq \frac{1}{2}\]
for $\hat{\X}\in V_{y, \J}^{+}$. Next we establish upper bounds for the derivatives of $\varphi_{1}$ and $\omega_{\J}^{\ast}$. In order to do so, we estbalish bounds for the derivatives of $\hat{F}_{y, \J}^{\ast}$ first.

\begin{lem}\label{F*bounds}
Let $i_{1}, ..., i_{n-1}\in \Z_{\geq 0}$ with $\sum_{\mu=1}^{n-1}{i_{\mu}}\leq \ell$. Then for all $\hat{\X}\in U_{y}^{+}$ we have
\[\left|\frac{\del^{i_{1}+\cdots + i_{n-1}}\hat{F}_{y, \J}^{\ast}}{\del^{i_{1}}_{x_{1}}\cdots \del^{i_{n-1}}_{x_{n-1}}}(\hat{\X})\right|\ll 1,\]
where the implicit constant depends only on $(i_{1}, ..., i_{n-1})$, $\rho'$, $\tau$ and upper bounds for (the absolute values of) finitely many derivatives of $f_{r}$ on $U_{y}^{+}\times \mathscr{Y}$ for $1\leq r\leq R$.
\end{lem}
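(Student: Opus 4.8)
The plan is to transfer to $\hat{F}^{\ast}_{y,\J}$ the derivative bounds already available for $\hat{F}_{y,\J}$ by working through the inverse map $G_{y,\J}:=(\nabla\hat{F}_{y,\J})^{-1}=\nabla\hat{F}^{\ast}_{y,\J}$, which is a $\mathcal{C}^{\ell-1}$-diffeomorphism with image contained in $\overline{\mathscr{D}}\subseteq\overline{B_{2\tau}(\hat{\X}_{0})}$, by Lemma \ref{compact1} and (\ref{determinantbound}). First I would dispose of the order-zero and order-one cases. For order zero, (\ref{legendrepoint}) gives $\hat{F}^{\ast}_{y,\J}(\hat{\X})=\hat{\X}\cdot G_{y,\J}(\hat{\X})-\hat{F}_{y,\J}(G_{y,\J}(\hat{\X}))$; since $\hat{\X}$ lies in a fixed bounded set, $G_{y,\J}(\hat{\X})\in\overline{B_{2\tau}(\hat{\X}_{0})}$, and $|\hat{F}_{y,\J}|\ll 1$ there by Lemma \ref{boundedfy} (using $0\le j_{r}/j_{1}\le 1$), this is $\ll 1$ uniformly in $\J$ and $y$. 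For order one, $\nabla\hat{F}^{\ast}_{y,\J}=G_{y,\J}$ takes its values in $\overline{B_{2\tau}(\hat{\X}_{0})}$ and is therefore bounded.

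For order two, differentiating $\nabla\hat{F}_{y,\J}\circ G_{y,\J}=\mathrm{id}$ (equivalently (\ref{legendrematrix})) gives $D^{2}\hat{F}^{\ast}_{y,\J}(\hat{\X})=DG_{y,\J}(\hat{\X})=H_{\hat{F}_{y,\J}}(G_{y,\J}(\hat{\X}))^{-1}$. By Cramer's rule this is $(\det H_{\hat{F}_{y,\J}})^{-1}$ times the adjugate, whose entries are polynomials in the entries of $H_{\hat{F}_{y,\J}}$; the latter are $\ll 1$ uniformly in $\J$ because every $f_{r}\in\mathcal{C}^{\ell}$ has bounded second derivatives on the compact set $\overline{B_{2\tau}(\hat{\X}_{0})}\times\overline{\mathscr{Y}}$ and $0\le j_{r}/j_{1}\le 1$, while $|\det H_{\hat{F}_{y,\J}}(G_{y,\J}(\hat{\X}))|\ge c_{1}>0$ by (\ref{determinantbound}). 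Hence $D^{2}\hat{F}^{\ast}_{y,\J}\ll 1$ uniformly in $\J$ and $y$.

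The higher orders follow by induction on $m$ with $3\le m\le\ell$. Since $\nabla\hat{F}^{\ast}_{y,\J}=G_{y,\J}$, an order-$m$ partial derivative of $\hat{F}^{\ast}_{y,\J}$ is an order-$(m-1)$ partial derivative of $G_{y,\J}$; differentiating the identity $H_{\hat{F}_{y,\J}}(G_{y,\J}(\hat{\X}))\,DG_{y,\J}(\hat{\X})=I$ a total of $m-2$ times and solving for $D^{m-1}G_{y,\J}$ (multiplying through by $H_{\hat{F}_{y,\J}}(G_{y,\J})^{-1}$) expresses it, via the product and chain rules, as a universal polynomial in: the entries of $H_{\hat{F}_{y,\J}}(G_{y,\J})^{-1}$, bounded by the previous step; the partial derivatives of $\hat{F}_{y,\J}$ of orders $3$ through $m\le\ell$ evaluated at $G_{y,\J}(\hat{\X})\in\overline{B_{2\tau}(\hat{\X}_{0})}$, which are $\ll 1$ uniformly in $\J$ for the same compactness reason as above; and the partial derivatives of $G_{y,\J}$ of orders $1$ through $m-2$, equivalently the $D^{j}\hat{F}^{\ast}_{y,\J}$ for $2\le j\le m-1$, controlled by the induction hypothesis. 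This closes the induction up to order $\ell$, which is the assertion. The parameters $\tau$ and $\rho'$ enter only through ensuring that the points at which we evaluate stay in $\overline{B_{2\tau}(\hat{\X}_{0})}$ and that $\overline{U_{y}}$ is well inside $\mathscr{D}$, so that the diffeomorphism property and the bound (\ref{determinantbound}) apply.

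The only genuine difficulty is the bookkeeping in the last step: organising the expansion of the higher derivatives of the inverse map (a Fa\`a di Bruno type identity) and checking that every summand is a product of factors already known to be $O(1)$ uniformly in $\J$. No new idea beyond the argument in the proof of Lemma \ref{phasederivbound} is required; the decisive structural inputs are the uniform lower bound (\ref{determinantbound}) for $|\det H_{\hat{F}_{y,\J}}|$ and the constraint $0\le j_{r}/j_{1}\le 1$, which keeps $\hat{F}_{y,\J}$ in a bounded subset of $\mathcal{C}^{\ell}(\overline{B_{2\tau}(\hat{\X}_{0})})$ independently of $\J$.
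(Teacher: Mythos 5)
Your proposal is correct and follows essentially the same route as the paper: handle orders zero and one directly from the Legendre transform identities, obtain the second derivatives from $H_{\hat{F}^{\ast}_{y,\J}}=H_{\hat{F}_{y,\J}}^{-1}$ via Cramer's rule together with the uniform lower bound (\ref{determinantbound}), and then induct on the order by differentiating the identity $H_{\hat{F}_{y,\J}}(G_{y,\J})\,DG_{y,\J}=I$, using compactness and $0\leq j_{r}/j_{1}\leq 1$ for uniformity in $\J$ and $y$. No gaps; this matches the paper's argument.
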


\begin{proof}
For $\hat{\X}\in V_{y, \J}^{+}$ and $\hat{\z}\in U_{y}^{+}$ we have
\[|\hat{\X}\cdot \hat{\z}|+|\hat{F}_{y, \J}|\ll 1,\]
hence we easily deduce $|\hat{F}_{y, \J}^{\ast}|\ll 1$ with (\ref{legendrepoint}).
Recall that $\nabla \hat{F}_{y, \J}^{\ast}=(\nabla \hat{F}_{y, \J})^{-1}$ and $U_{y}^{+}=(\nabla \hat{F}_{y, \J})^{-1}(V_{y, \J}^{+})$. Hence $|\nabla \hat{F}^{\ast}_{y, \J}(\hat{\X})|\ll 1$ for $\hat{\X}\in V_{y, \J}^{+}$. For $\hat{\X}=\nabla \hat{F}_{y, \J}(\hat{\z})$ with $\hat{\z}\in U_{y}^{+}$ we have
\begin{equation}\label{Jacs}
\Jac_{\nabla \hat{F}^{\ast}_{y, \J}}(\hat{\X})=\Jac_{(\nabla \hat{F}_{y, \J})^{-1}}(\hat{\X})=(\Jac_{\nabla \hat{F}_{y, \J}}(\hat{\z}))^{-1},
\end{equation}
where $\Jac_{f}$ denotes the Jacobian matrix of the function $f$ and we used the chain rule. Consequently every second partial derivative of $\hat{F}_{y, \J}^{\ast}$, i.e. the entries of the Jacobian matrix, can be written as
\begin{equation}\label{secondderiv}
\frac{P}{\det(\Jac_{\nabla \hat{F}_{y, \J}}(\hat{\z}))},
\end{equation}
where $P$ is a polynomial expression in the terms of the entries of $\Jac_{\nabla \hat{F}_{y, \J}}(\hat{\z})$. Note that $P$ has degree $(n-1)$ and each coefficient can only be $\pm 1$ or $0$. Since 
\[\left|\frac{\del^{i_{1}+\cdots + i_{n-1}}\hat{F}_{y, \J}}{\del^{i_{1}}_{x_{1}}\cdots \del^{i_{n-1}}_{x_{n-1}}}(\hat{\z})\right|\ll 1\]
is obvious for any $i_{1}, ..., i_{n-1}\in \Z_{\geq 0}$ with $\sum_{\mu=1}^{n-1}i_{\mu}\leq \ell$ and $\hat{\z}\in U_{y}^{+}$, and $\Jac_{\nabla \hat{F}_{y, \J}}=H_{\hat{F}_{y, \J}}$, the desired bounds for the second derivatives follows directly with (\ref{secondderiv}) and (\ref{determinantbound}). Essentially the same idea will be used to argue for higher partial derivatives. Note that for any $k\in \N$ we can express the $k$-th partial derivative with respect to the $\hat{\X}$-variables of an entry in $\Jac_{\nabla \hat{F}_{y, \J}^{\ast}}(\hat{\X})$ as a real polynomial with coefficients independant of $\J$ in terms of:
\begin{compactenum}[(i)]
\item $(\Jac_{\nabla \hat{F}_{y, \J}}(\hat{\z}))^{-m}$, where $m\leq k+1$;
\item entries of $\Jac_{\nabla \hat{F}_{y, \J}}(\hat{\z})$;
\item $m$-th partial derivatives with respect to the $\hat{\z}$-variables of entries in $\Jac_{\nabla \hat{F}_{y, \J}}$, where $m\geq k$;
\item $m$-th partial derivatives with respect to the $\hat{\X}$-variables of entries in $\nabla \hat{F}_{y, \J}^{\ast}=(\nabla \hat{F}_{y, \J})^{-1}(\hat{\X})$, where $m\geq k$.
\end{compactenum}
Now, using (\ref{determinantbound}) again, the desired result follows inductively.
\end{proof}

Now with similar arguments as in \ref{phasederivbound} we can deduce the following.

\begin{cor}\label{phasederivbound*}
Let $i_{1}, ..., i_{n-1}\in \Z_{\geq 0}$ with $\sum_{\mu=1}^{n-1}{i_{\mu}}\leq \ell$. Then for all $\hat{\X}\in U_{y}^{+}$ we have
\[\left|\frac{\del^{i_{1}+\cdots + i_{n-1}}\varphi_{1}}{\del^{i_{1}}_{x_{1}}\cdots \del^{i_{n-1}}_{x_{n-1}}}(\hat{\X})\right|\ll 1,\]
where the implicit constant depends only on $(i_{1}, ..., i_{n-1})$, $\rho'$, $\tau$ and upper bounds for (the absolute values of) finitely many derivatives of $f_{r}$ on $U_{y}^{+}\times \mathscr{Y}$ for $1\leq r\leq R$.
\end{cor}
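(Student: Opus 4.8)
The plan is to transcribe the proof of Lemma~\ref{phasederivbound} almost verbatim, now with the Legendre transform $\hat{F}_{y,\J}^{\ast}$ in the role played there by $\hat{F}_{y,\J}$, and with Lemma~\ref{F*bounds} furnishing the needed uniform bounds on all derivatives (up to order $\ell$) of $\hat{F}_{y,\J}^{\ast}$ on $V_{y,\J}^{+}$. The single structural simplification compared with Lemma~\ref{phasederivbound} is that for $\hat{\K}\in\mathscr{K}_{2}$ one has the identity $\dist(\hat{\K},dU_{y})=d\,\dist(\hat{\K}/d,U_{y})$, so by definition of $\mathscr{K}_{2}$,
\[\frac{d}{\dist(\hat{\K},dU_{y})}\le\frac{1}{\rho'};\]
this quotient is the only place the possibly small normalising factor in $\varphi_{1}$ can hurt, and for every partial derivative of order at least two it is all that is needed.

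First I would bound $|\varphi_{1}|$ itself, which is the delicate point since $\dist(\hat{\K},dU_{y})$ may be as small as $d\rho'$ while the numerator contains the linear term $\hat{\K}\cdot\hat{\X}$. Exactly as in Lemma~\ref{phasederivbound} I would fix a constant $C>0$, large enough that $C^{-1}\sup|\nabla\hat{F}_{y,\J}^{\ast}|<\tfrac12$ with the supremum taken over $\hat{\X}\in V_{y,\J}^{+}$, over $y\in\overline{\mathscr{Y}}$, and over admissible $\J$ (this supremum is $O(1)$ by Lemma~\ref{F*bounds}), and split into cases. If $|\hat{\K}|>Cd$ then $\dist(\hat{\K},dU_{y})\ge\tfrac12|\hat{\K}|$, and after dividing numerator and denominator of $\varphi_{1}$ by $|\hat{\K}|$ the bounds $|\hat{F}_{y,\J}^{\ast}|\ll1$ and $|\hat{\X}|\ll1$ on $V_{y,\J}^{+}$ give $|\varphi_{1}(\hat{\X})|\ll1$. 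If instead $|\hat{\K}|\le Cd$ then $|\hat{\K}\cdot\hat{\X}|\ll d$ while $\dist(\hat{\K},dU_{y})\ge d\rho'$, so again $|\varphi_{1}(\hat{\X})|\ll1$. The first-order partials $\del\varphi_{1}/\del x_{i}=(d\,\del\hat{F}_{y,\J}^{\ast}/\del x_{i}-k_{i})/\dist(\hat{\K},dU_{y})$ are handled by the very same dichotomy, now applied to the size of $|k_{i}|$.

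For any partial derivative of order at least two the linear term $\hat{\K}\cdot\hat{\X}$ is annihilated, so such a derivative of $\varphi_{1}$ is just $d/\dist(\hat{\K},dU_{y})$ times the corresponding derivative of $\hat{F}_{y,\J}^{\ast}$; by the displayed inequality above and Lemma~\ref{F*bounds} this is $\ll1$, with the implied constant depending only on the order $(i_{1},\dots,i_{n-1})$, on $\rho'$, on $\tau$, and on upper bounds for finitely many derivatives of the $f_{r}$ on $U_{y}^{+}\times\mathscr{Y}$, as claimed. (The analogous bounds for $\omega_{\J}^{\ast}=\omega\circ(\nabla\hat{F}_{y,\J})^{-1}$ that are needed afterwards follow in the same way, since by the chain rule every derivative of $\omega_{\J}^{\ast}$ is a polynomial expression in the bounded derivatives of $\nabla\hat{F}_{y,\J}^{\ast}=(\nabla\hat{F}_{y,\J})^{-1}$ and of $\omega$.)

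I expect the only real obstacle to be, just as in Lemma~\ref{phasederivbound}, making the estimate for $|\varphi_{1}|$ uniform over all $\hat{\K}$ and $\J$ despite the arbitrarily small normalising factor $\dist(\hat{\K},dU_{y})$; this is resolved purely by the size split on $|\hat{\K}|$ together with the boundedness of $U_{y}$ (hence $V_{y,\J}\subseteq[-L,L]^{n-1}$), which forces $\dist(\hat{\K},dU_{y})$ to be comparable to $|\hat{\K}|$ once $|\hat{\K}|$ is large. Everything else is a routine transcription of the earlier argument, and the dependencies of the implied constants come out exactly as recorded in Lemma~\ref{F*bounds}.
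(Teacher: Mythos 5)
Your proposal is correct and follows essentially the same route the paper intends: the paper offers no explicit proof beyond ``similar arguments as in Lemma \ref{phasederivbound}'', and you carry out exactly that transcription, using Lemma \ref{F*bounds} for the uniform derivative bounds on $\hat{F}_{y,\J}^{\ast}$, the lower bound $\dist(\hat{\K},dU_{y})\geq d\rho'$ from the definition of $\mathscr{K}_{2}$, and the size dichotomy on $|\hat{\K}|$ for the zeroth- and first-order cases. (The only cosmetic remark is that the natural domain is $V_{y,\J}^{+}$ rather than $U_{y}^{+}$ as written in the statement, a slip already present in the paper, which you implicitly correct.)
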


Recall that $\omega^{\ast}_{\J}=\omega \circ \nabla\hat{F}_{y, \J}^{\ast}$, hence we also obtain the following.

\begin{cor}\label{omega*bound}
Let $i_{1}, ..., i_{n-1}\in \Z_{\geq 0}$ with $\sum_{\mu=1}^{n-1}{i_{\mu}}\leq \ell-1$. Then for all $\hat{\X}\in U_{y}^{+}$ we have
\[\left|\frac{\del^{i_{1}+\cdots + i_{n-1}}\omega_{\J}^{\ast}}{\del^{i_{1}}_{x_{1}}\cdots \del^{i_{n-1}}_{x_{n-1}}}(\hat{\X})\right|\ll 1,\]
where the implicit constant depends only on $(i_{1}, ..., i_{n-1})$, $\rho'$, $\tau$ and upper bounds for (the absolute values of) finitely many derivatives of $f_{r}$ on $U_{y}^{+}\times \mathscr{Y}$ for $1\leq r\leq R$.
\end{cor}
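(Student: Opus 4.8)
The plan is to regard $\omega^{\ast}_{\J}=\omega\circ\nabla\hat{F}^{\ast}_{y,\J}$ as a composition of the fixed weight $\omega$ (viewed as a function of $\hat{\X}$ with $y$ a fixed parameter) with the map $\nabla\hat{F}^{\ast}_{y,\J}$, and to differentiate it by the multivariate chain rule, feeding in the derivative bounds for the Legendre transform from Lemma \ref{F*bounds} exactly where the proof of Lemma \ref{phasederivbound} used the bounds for the $\hat{f}_{r,y}$. Concretely, for a multi-index $(i_{1},\ldots,i_{n-1})$ with $k:=i_{1}+\cdots+i_{n-1}\le\ell-1$, the Fa\`a di Bruno formula writes $\frac{\del^{k}\omega^{\ast}_{\J}}{\del_{x_{1}}^{i_{1}}\cdots\del_{x_{n-1}}^{i_{n-1}}}(\hat{\X})$ as a finite sum, with combinatorial coefficients depending only on $(i_{1},\ldots,i_{n-1})$ and in particular independent of $\J$, of terms of the shape
\[
(\del^{\alpha}\omega)\!\left(\nabla\hat{F}^{\ast}_{y,\J}(\hat{\X})\right)\cdot\prod_{s}\del^{\beta_{s}}\!\left(\del_{x_{c_{s}}}\hat{F}^{\ast}_{y,\J}\right)(\hat{\X}),
\]
where $|\alpha|\le k$, each $|\beta_{s}|\ge 1$, and $\sum_{s}|\beta_{s}|=k$.

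I would then bound the two types of factor separately. Each factor $(\del^{\alpha}\omega)(\nabla\hat{F}^{\ast}_{y,\J}(\hat{\X}))$ with $|\alpha|\le k\le\ell-1$ is $\bigO{1}$ because $\omega\in\mathcal{C}^{\infty}_{c}(\R^{n})$ has bounded partial derivatives of every order, the bound depending only on $\omega$ and $\alpha$ (uniformity in $y\in\overline{\mathscr{Y}}$ being automatic by compactness). Each factor $\del^{\beta_{s}}(\del_{x_{c_{s}}}\hat{F}^{\ast}_{y,\J})$ is a partial derivative of $\hat{F}^{\ast}_{y,\J}$ of order $|\beta_{s}|+1\le k+1\le\ell$, hence is $\bigO{1}$ by Lemma \ref{F*bounds}, with the dependence recorded there, namely on $\rho'$, $\tau$ and finitely many derivatives of the $f_{r}$ on $U_{y}^{+}\times\mathscr{Y}$. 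Since the number of summands in the expansion depends only on $k$ and $n$, multiplying and summing these estimates yields $\left|\frac{\del^{k}\omega^{\ast}_{\J}}{\del_{x_{1}}^{i_{1}}\cdots\del_{x_{n-1}}^{i_{n-1}}}(\hat{\X})\right|\ll 1$ with the claimed dependence.

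I do not anticipate a genuine obstacle: the analytic content sits in Lemma \ref{F*bounds}, and what is left is the bookkeeping of orders of differentiation. The one point deserving a moment's care is checking that an order-$(\ell-1)$ derivative of the composition never calls for an order-$(\ell+1)$ derivative of $\hat{F}^{\ast}_{y,\J}$ — it calls for at most order $\ell$, which is precisely the range covered by Lemma \ref{F*bounds} — and, on the other side, at most an order-$(\ell-1)$ derivative of $\omega$, which is harmless; this is exactly why the corollary is restricted to $\sum_{\mu}i_{\mu}\le\ell-1$ rather than $\le\ell$, reflecting that $\nabla\hat{F}^{\ast}_{y,\J}$ is only $\mathcal{C}^{\ell-1}$ while $\hat{F}^{\ast}_{y,\J}$ is $\mathcal{C}^{\ell}$. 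An alternative, closer in spirit to the proof of Lemma \ref{phasederivbound}, is to avoid quoting Fa\`a di Bruno and induct on $k$ instead: starting from $\del_{x_{i}}\omega^{\ast}_{\J}(\hat{\X})=\sum_{j}(\del_{j}\omega)(\nabla\hat{F}^{\ast}_{y,\J}(\hat{\X}))\,\del_{x_{i}}\del_{x_{j}}\hat{F}^{\ast}_{y,\J}(\hat{\X})$, one differentiates $k-1$ further times, applies the Leibniz rule, and invokes Lemma \ref{F*bounds} together with the inductive hypothesis and the boundedness of all derivatives of $\omega$.
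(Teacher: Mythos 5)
Your proposal is correct and matches the paper's (implicit) argument: the paper merely notes that $\omega^{\ast}_{\J}=\omega\circ\nabla\hat{F}^{\ast}_{y,\J}$ and lets the chain rule together with Lemma \ref{F*bounds} do the work, which is exactly the Fa\`a di Bruno/induction bookkeeping you carry out. Your observation that an order-$(\ell-1)$ derivative of the composition needs at most order-$\ell$ derivatives of $\hat{F}^{\ast}_{y,\J}$, explaining the restriction $\sum_{\mu}i_{\mu}\le\ell-1$, is the right accounting.
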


Apllying \ref{stat} with $\varphi=\varphi_{1}$ and $\lambda=\lambda_{1}$ as defined above now yields
\[I_{0}(d; \J; \hat{\K})\ll \lambda^{-\ell +1}=(j_{1}\dist(\hat{\K}, dU_{y}))^{-\ell +1},\]
where the implicit constant is independant of $y$, $\J$ and $\hat{\K}$. Now since $\ell-n\geq 1$ we find similarly to (\ref{K2argument})
\begin{align}
\sum_{\hat{\K}\in \mathscr{K}_{2}}{I_{0}(d; \J; \hat{\K})}&\ll j_{1}^{-\ell+1}\sum_{\hat{\K}\in \mathscr{K}_{2}}{\dist(\hat{\K}, dU_{y})^{{-\ell+1}}}\\
&\ll j_{1}^{-\ell+1}\sum_{m=0}^{\infty}{\sum_{\substack{\hat{\K}\in \Z^{n-1}\\ 2^{m}j_{1}\rho'\leq \dist(\hat{\K}, dU_{y})<2^{m+1}j_{1}\rho'}}}{\frac{1}{(2^{m}j_{1}\rho')^{\ell -1}}},\nonumber\\
&\ll j_{1}^{-\ell+1}\sum_{m=0}^{\infty}{\frac{(Lj_{1}+2^{m+1}j_{1}\rho')^{n-1}}{2^{m}j_{1}\rho'}},\nonumber\\
&\ll j_{1}^{-\ell +1}\nonumber,
\end{align}
where the implicit constant is independant of $y$ and $d$. Consequently we obtain
\begin{align}\label{M2}
M_{2}&\leq \sum_{\substack{1\leq j_{1}\leq J\\ 0\leq j_{r}\leq \min\{J_{r}, j_{1}\}\\ 2\leq r\leq R}}{\sum_{d=1}^{D}{\frac{D-d}{D^{2}}j_{1}^{n-1}\left|\sum_{\hat{\K}\in \mathscr{K}_{i}}{I_{0}(d; \J; \hat{\K})}\right|}}\\
&\ll \frac{D-1}{2D}\sum_{\substack{1\leq j_{1}\leq J\\ 0\leq j_{r}\leq \min\{J_{r}, j_{1}\}\\ 2\leq r\leq R}}{j_{1}^{n-\ell+1}}\nonumber\\
&\ll \left(\prod_{r=2}^{R}{J_{r}}\right)\log J.\nonumber
\end{align}

\paragraph{Case $\hat{\K}\in \mathscr{K}_{3}$.} Let $\lambda=j_{1}d$ and
\[\varphi(\hat{\X})=\hat{F}_{y, \J}^{\ast}(\hat{\X})-\frac{\hat{\K}}{d}\cdot \hat{\X}.\]
By definition, for each fixed $d$ we have that $\hat{\K}\in d\mathscr{D}$ determines a unique preimage
\[\xcritd=(\nabla \hat{F}_{y, \J}^{\ast})^{-1}\left(\frac{\hat{\K}}{d}\right)=\nabla\hat{F}_{y, \J}\left(\frac{\hat{\K}}{d}\right)\]
that is also a critical point for $\varphi$ in the sense that
\[\nabla\varphi(\xcritd)=\nabla \hat{F}_{y, \J}^{\ast}(\xcritd)-\frac{\hat{\K}}{d}=\0.\]

\begin{lem}\label{nonstatgradientbound2}
Let $\hat{\X}\in \nabla\hat{F}_{y, \J}(\mathscr{D}_{+})\backslash\{\xcritd\}$. Then 
\[\frac{|\hat{\X}-\xcritd|}{|\nabla \varphi(\hat{\X})|}\ll 1\]
where the implicit constant is independant of $d$, $\J$ and $\hat{\K}$.
\end{lem}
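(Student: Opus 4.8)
# Proof Proposal for Lemma \ref{nonstatgradientbound2}

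The plan is to mirror the argument used for Lemma \ref{nonstatgradientbound}, working now with the Legendre transform $\hat{F}^{\ast}_{y,\J}$ in place of $\hat{F}_{y,\J}$. Writing the quotient in terms of the critical point, we have
\[
\frac{|\hat{\X}-\xcritd|}{|\nabla\varphi(\hat{\X})|}
=\frac{|\hat{\X}-\xcritd|}{|\nabla\hat{F}^{\ast}_{y,\J}(\hat{\X})-\frac{\hat{\K}}{d}|}
=\frac{|\hat{\X}-\xcritd|}{|\nabla\hat{F}^{\ast}_{y,\J}(\hat{\X})-\nabla\hat{F}^{\ast}_{y,\J}(\xcritd)|},
\]
using $\nabla\hat{F}^{\ast}_{y,\J}(\xcritd)=\hat{\K}/d$. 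So it suffices to bound $\nabla\hat{F}^{\ast}_{y,\J}$ below as a bi-Lipschitz map near $\xcritd$, uniformly in $d,\J,\hat{\K},y$.

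First I would invoke Taylor's theorem for $\nabla\hat{F}^{\ast}_{y,\J}$ on $\overline{\nabla\hat{F}_{y,\J}(\mathscr{D}_{+})}$: for distinct points $\hat{\X},\hat{\z}$ in this set,
\[
\nabla\hat{F}^{\ast}_{y,\J}(\hat{\X})-\nabla\hat{F}^{\ast}_{y,\J}(\hat{\z})
=H_{\hat{F}^{\ast}_{y,\J}}(\hat{\z})(\hat{\X}-\hat{\z})+O(|\hat{\X}-\hat{\z}|^{2}),
\]
where the implicit constant is controlled by the bounds on the second and third derivatives of $\hat{F}^{\ast}_{y,\J}$ furnished by Lemma \ref{F*bounds}, hence independent of $\J$ (and of $y$, $d$). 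Next, by the identity (\ref{legendrematrix}) we have $H_{\hat{F}^{\ast}_{y,\J}}(\hat{\z})=H_{\hat{F}_{y,\J}}(\nabla\hat{F}^{\ast}_{y,\J}(\hat{\z}))^{-1}$, so its determinant is the reciprocal of $\det H_{\hat{F}_{y,\J}}$ at the corresponding point, which by (\ref{determinantbound}) lies between $c_{1}$ and $c_{2}$; thus $|\det H_{\hat{F}^{\ast}_{y,\J}}(\hat{\z})|\in[c_{2}^{-1},c_{1}^{-1}]$ uniformly. Since the eigenvalues of a real symmetric matrix depend continuously on its entries, and the entries of $H_{\hat{F}^{\ast}_{y,\J}}$ are uniformly bounded (again Lemma \ref{F*bounds}) with determinant uniformly bounded away from $0$, the minimum absolute eigenvalue $\lambda_{\min}$ is bounded below by a positive constant independent of $\J,y$. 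Therefore $|H_{\hat{F}^{\ast}_{y,\J}}(\hat{\z})(\hat{\X}-\hat{\z})|\gg|\hat{\X}-\hat{\z}|$, and absorbing the quadratic error term for $|\hat{\X}-\hat{\z}|\le\eta$ small enough (with $\eta$ uniform) gives a constant $C>0$, independent of $d,\J,\hat{\K},y$, with
\[
|\hat{\X}-\hat{\z}|\le C\,|\nabla\hat{F}^{\ast}_{y,\J}(\hat{\X})-\nabla\hat{F}^{\ast}_{y,\J}(\hat{\z})|
\qquad\text{whenever }|\hat{\X}-\hat{\z}|\le\eta .
\]
Specializing to $\hat{\z}=\xcritd$ yields the claim in the regime $|\hat{\X}-\xcritd|\le\eta$. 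For $|\hat{\X}-\xcritd|>\eta$, note $\hat{\X}$ ranges over the bounded set $\nabla\hat{F}_{y,\J}(\mathscr{D}_{+})\subseteq[-2L,2L]^{n-1}$ so the numerator is $O(1)$, while the denominator $|\nabla\varphi(\hat{\X})|$ is bounded below by a uniform positive constant: indeed $\nabla\hat{F}^{\ast}_{y,\J}$ is a diffeomorphism onto a domain containing $\overline{\mathscr{D}}$, so on the compact annulus $\{|\hat{\X}-\xcritd|\ge\eta\}\cap\nabla\hat{F}_{y,\J}(\overline{\mathscr{D}_{+}})$ the quantity $|\nabla\hat{F}^{\ast}_{y,\J}(\hat{\X})-\nabla\hat{F}^{\ast}_{y,\J}(\xcritd)|$ stays away from $0$, with a lower bound controlled only by the uniform diffeomorphism constants. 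Combining the two regimes completes the proof.

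The main obstacle is purely the uniformity bookkeeping: one must check that every constant entering the Taylor remainder, the eigenvalue lower bound, the threshold $\eta$, and the far-field lower bound depends only on $n$, on $c_{1},c_{2}$ from (\ref{determinantbound}), on $\rho'$ from (\ref{rho'}), on $\tau$, and on finitely many derivative bounds for the $f_{r}$ on $U_{y}^{+}\times\mathscr{Y}$ — and in particular \emph{not} on $\J$ (through the ratios $j_{r}/j_{1}\in[0,1]$), nor on $d$, $\hat{\K}$, or $y\in\overline{\mathscr{Y}}$. This is exactly what Lemma \ref{F*bounds} and (\ref{determinantbound}) were set up to provide, so no genuinely new estimate is needed; the argument is the Legendre-dual twin of Lemma \ref{nonstatgradientbound}.
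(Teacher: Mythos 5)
Your proof is correct, but it takes a different (and somewhat heavier) route than the paper. You replay the argument of Lemma \ref{nonstatgradientbound} verbatim for the Legendre transform: Taylor expansion of $\nabla\hat{F}^{\ast}_{y,\J}$, a lower bound on the minimal eigenvalue of $H_{\hat{F}^{\ast}_{y,\J}}$ via (\ref{legendrematrix}) and (\ref{determinantbound}), absorption of the quadratic remainder for $|\hat{\X}-\xcritd|\leq\eta$, and a separate far-field regime. This works, but it forces you to invoke Lemma \ref{F*bounds} for the second and third derivatives of $\hat{F}^{\ast}_{y,\J}$ (to control the Taylor remainder uniformly in $\J$ and $y$) and to run the eigenvalue argument a second time. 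The paper instead substitutes $\hat{\X}=\nabla\hat{F}_{y,\J}(\hat{\X}')$, $\hat{\z}=\nabla\hat{F}_{y,\J}(\hat{\z}')$, under which the desired lower bound for $|\nabla\hat{F}^{\ast}_{y,\J}(\hat{\X})-\nabla\hat{F}^{\ast}_{y,\J}(\hat{\z})|/|\hat{\X}-\hat{\z}|$ becomes the \emph{upper} Lipschitz bound $|\nabla\hat{F}_{y,\J}(\hat{\X}')-\nabla\hat{F}_{y,\J}(\hat{\z}')|\ll|\hat{\X}'-\hat{\z}'|$ for the original map; this follows immediately from the Taylor expansion already written down in the proof of Lemma \ref{nonstatgradientbound} together with the uniform bound on the Hessian entries of $\hat{F}_{y,\J}$, with no eigenvalue lower bound, no case split, and no need for Lemma \ref{F*bounds}. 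What your approach buys is self-containedness in the dual coordinates; what the paper's buys is brevity and fewer prerequisites. One small point to tighten in your write-up: in the far regime your lower bound on the denominator is asserted rather than proved uniformly in $y,\J$ (a compactness argument over a $\J$-dependent family needs care); the cleanest fix is exactly the paper's substitution, which gives $|\nabla\hat{F}^{\ast}_{y,\J}(\hat{\X})-\nabla\hat{F}^{\ast}_{y,\J}(\xcritd)|=|\hat{\X}'-\xcritd'|\geq|\hat{\X}-\xcritd|/C'\geq\eta/C'$ with $C'$ the uniform Lipschitz constant of $\nabla\hat{F}_{y,\J}$.
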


\begin{proof}
Observe that for $\hat{\K}/d\in \mathscr{D}$ we have
\[\frac{|\hat{\X}-\xcritd|}{|\nabla\varphi(\hat{\X})|}=\frac{|\hat{\X}-(\nabla\hat{F}_{y, \J}^{\ast})^{-1}(\hat{\K}/d)|}{|\nabla \hat{F}_{y, \J}^{\ast}(\hat{\X})-\frac{\hat{\K}}{d}|},\]
hence it is sufficient to prove
\[\frac{|\hat{\X}-\hat{\z}|}{|\nabla \hat{F}_{y, \J}^{\ast}(\hat{\X})-\nabla \hat{F}_{y, \J}^{\ast}(\hat{\z})|}\ll 1\]
for $\hat{X}, \hat{\z}\in \overline{\nabla \hat{F}_{y, \J}(\mathscr{D}_{+})}$ and $\hat{\X}\neq \hat{\z}$. Taking $\hat{\X}', \hat{\z}'\in \overline{\mathscr{D}_{+}}$ with $\hat{\X}'\neq\hat{\z}'$ such that $\hat{\X}=\nabla\hat{F}_{y, \J}(\hat{\X}')=(\nabla\hat{F}_{y, \J}^{\ast})^{-1}$ (and the same for $\hat{\z}$) the inequality is euqivalent to 
\[\frac{|\nabla \hat{F}_{y, \J}(\hat{\X}')-\nabla \hat{F}_{y, \J}(\hat{\z}')|}{|\hat{\X}'-\hat{\z}'|}\ll 1.\]
or alternatively
\[1\ll \frac{|\hat{\X}'-\hat{\z}'|}{|\nabla \hat{F}_{y, \J}(\hat{\X}')-\nabla \hat{F}_{y, \J}(\hat{\z}')|}.\]
We have already established in the proof of \ref{nonstatgradientbound} that
\[\nabla \hat{F}_{y, \J}(\hat{\X}')-\nabla \hat{F}_{y, \J}(\hat{\z}')=H_{\hat{F}_{y, \J}}(\hat{\z}')(\hat{\X}'-\hat{\z}')+O(|\hat{\X}-\hat{\z'}|^{2}),\]
which yields the desired lower bound immediately.
\end{proof}
Note that because of \ref{F*bounds} we can deduce the same result from \ref{phasederivbound*} for $\varphi$ in this case, i.e. for given $i_{1}, ..., i_{n-1}\in \Z_{\geq 0}$ with $\sum_{\mu=1}^{n-1}{i_{\mu}}\leq \ell$ and $\hat{\X}\in V_{y, \J}^{+}$ we have
\begin{equation}\label{phasederivbound**}
\left|\frac{\del^{i_{1}+\cdots + i_{n-1}}\varphi}{\del^{i_{1}}_{x_{1}}\cdots \del^{i_{n-1}}_{x_{n-1}}}(\hat{\X})\right|\ll 1.
\end{equation}
The implicit constant is again independent of $y$, $d$, $\J$ and such $\hat{\K}$ that satisfy this case. By construction we have $H_{\varphi}=H_{\hat{F}_{y, \J}^{\ast}}$, so with (\ref{legendrematrix}) and (\ref{determinantbound}) we have $H_{\varphi}(\xcritd)\neq 0$ and consequently Lemma \ref{stat} yields
\[I_{0}(d;\J;\hat{\K})\ll \lambda^{-\frac{n+1}{2}}=j_{1}^{-\frac{n}{2}-\frac{1}{2}}d^{-\frac{n}{2}-\frac{1}{2}}.\]
Note that we chose $\hat{\K}/d\not\in U_{y}$, hence $\xcritd\not\in \nabla\hat{F}_{y, \J}(U_{y})$, i.e. $\omega_{\J}^{\ast}(\xcritd)=0$. With a similar argument as in \ref{countK} we obtain
\[\sum_{\hat{\K}\in \mathscr{K}_{3}}{I_{0}(d;\J; \hat{\K})}\ll d^{n-1}j_{1}^{-\frac{n}{2}-\frac{1}{2}}d^{-\frac{n}{2}-\frac{1}{2}}=j_{1}^{-\frac{n}{2}-\frac{1}{2}}d^{\frac{n}{2}-\frac{3}{2}}.\]
Hence we have
\begin{align}\label{M3final}
M_{3}&\leq \sum_{\substack{1\leq j_{1}\leq J\\ 0\leq j_{r}\leq \min\{J_{r}, j_{1}\}\\ 2\leq r\leq R}}{\sum_{d=1}^{D}{\frac{D-d}{D^{2}}\left| j_{1}^{n}\sum_{\hat{\K}\in \mathscr{K}_{i}}{I_{0}(d; \J; \hat{\K})}\right|}}&\\
&\ll \frac{1}{D}\sum_{\substack{1\leq j_{1}\leq J\\ 0\leq j_{r}\leq \min\{J_{r}, j_{1}\}\\ 2\leq r\leq R}}{ j_{1}^{\frac{n}{2}-\frac{1}{2}}\sum_{d=1}^{D}{d^{\frac{n}{2}-\frac{3}{2}} }}\nonumber\\
&\ll \left(\prod_{r=2}^{R}{J_{r}}\right)J^{\frac{n}{2}+\frac{1}{2}}D^{\frac{n}{2}-\frac{3}{2}}\nonumber. 
\end{align}

\paragraph{Case $\hat{\K}\in \mathscr{K}_{1}$.} Let $\lambda$ and $\varphi$ be as in the previous case, specifically maintaining Lemma \ref{nonstatgradientbound2} and (\ref{phasederivbound**}). Then we have
\begin{equation}\label{phicrit}
\varphi(\xcritd)=\hat{F}^{\ast}_{y, \J}(\xcritd)-\frac{\hat{\K}}{d}\cdot \xcritd=-\hat{F}_{y, \J}\left(\frac{\hat{\K}}{d}\right)
\end{equation}
and by (\ref{legendrematrix})
\[H_{\hat{F}_{y, \J}^{\ast}}(\xcritd)=H_{\hat{F}_{y, \J}}\left(\frac{\hat{\K}}{d}\right)^{-1}.\]
Similar to the arguments right before (\ref{Integraly}) we find that the signature $\sigma$ of $H_{\varphi}(\xcrit)=H_{\hat{F}_{y, \J}^{\ast}}(\xcritd)$ is constant for all $d$, $\J$ and $\hat{\K}$ in consideration, hence with Lemma \ref{stat}, (\ref{determinantbound}) and (\ref{phicrit}) we obtain
\begin{align}
&I_{0}(d; \J; \hat{\K})\\
&\qquad =\frac{\omega^{\ast}_{\J}(\xcritd)}{|\det H_{\hat{F}_{y, \J}^{\ast}}(\xcritd)|^{\frac{1}{2}}}(j_{1}d)^{-\frac{n}{2}+\frac{1}{2}}e\left(-j_{1}d\hat{F}_{y, \J}\left(\frac{\hat{\K}}{d}\right)+\frac{\sigma}{8}\right)+\bigO{(j_{1}d)^{-\frac{n}{2}-\frac{1}{2}}}\nonumber\\
&\qquad =\omega\left(\frac{\hat{\K}}{d}\right)|\det H_{\hat{F}_{y, \J}}(\hat{\K}/d)|^{\frac{1}{2}}(j_{1}d)^{-\frac{n}{2}+\frac{1}{2}}e\left(-d(j_{1}\hat{f}_{1, y}+\cdots+j_{R}\hat{f}_{R, y})\left(\frac{\hat{\K}}{d}\right)+\frac{\sigma}{8}\right)\nonumber\\
&\qquad+\bigO{(j_{1}d)^{-\frac{n}{2}-\frac{1}{2}}},\nonumber
\end{align}
where the implicit constant is independant of $y$, $d$, $\J$ and $\hat{\K}$.
For $(u_{1}, ..., u_{R})\in \R_{>0}\times \R^{R-1}_{\geq 0}$ we consider the function
\[\Psi_{\hat{\K}; d}(u_{1}, ..., u_{R})=u_{1}^{\frac{n}{2}+\frac{1}{2}}|\det{H_{\hat{f}_{1, y}+\frac{u_{2}}{u_{1}}\hat{f}_{2, y}+\cdots +\frac{u_{R}}{u_{1}}f_{R, y}}}(\hat{\K}/d)|^{\frac{1}{2}}\]
and obtain
\begin{align}
&\left|\sum_{\substack{1\leq j_{1}\leq J\\ 0\leq j_{r}\leq \min\{J_{r}, j_{1}\}\\ 2\leq r\leq R}}{j_{1}^{n}I_{0}(d; \J; \hat{\K})}\right|\\
&\qquad \ll \omega\left(\frac{\hat{\K}}{d}\right)d^{-\frac{n}{2}+\frac{1}{2}}\sum_{\substack{0\leq j_{r}\leq J_{r}\\2\leq r\leq R}}{\left|\sum_{\max\{1, j_{2}, ...,j_{R}\}\leq j_{1}\leq J }{\Psi_{\hat{\K}; d}}(j_{1}, ..., j_{R})e(-dj_{1}\hat{f}_{1, y}(\hat{\K}/d)) \right|}\nonumber\\
&\qquad + \bigO{\left(\prod_{r=2}^{R}{J_{r}}\right)J^{\frac{n}{2}+\frac{1}{2}}d^{-\frac{n}{2}-\frac{1}{2}}}.\nonumber
\end{align}
Given any fixed $u_{2}, ..., u_{R}\in \R_{\geq 0}$ we find that $\Psi(\cdot, u_{2}, ..., u_{R})$ is a smooth function on the set $\{u_{1}\in \R_{>0}\mid u_{1}\geq u_{r},  2\leq r\leq R\}$. Let $\Psi^{(1)}_{\hat{\K}; d}$ denote the partial derivative of $\Psi_{\hat{\K}; d}$ in $u_{1}$-direction. Then by partial summation we have for the innermost sum 
\begin{align}
&\sum_{\max\{1, j_{2}, ...,  j_{R}\}\leq j_{1}\leq J}{\Psi_{\hat{\K}; d}(j_{1}, ...., j_{R})e(-dj_{1}\hat{f}_{1, y}(\hat{\K}/d))}\\
&\leq\Psi_{\hat{\K}; d}(J, j_{2}, ..., j_{R})\sum_{j_{1}=1}^{J}{e(-j_{1}d\hat{f}_{1, y}(\hat{\K}/d))}-\int_{1}^{J}{ \sum_{j_{1}=1}^{\xi}{e(-j_{1}d\hat{f}_{1, y}(\hat{\K}/d))}\Psi^{(1)}_{\hat{\K}; d}(\xi, j_{2}, ..., j_{R})\D \xi}.\nonumber
\end{align}
We distinguish two cases. First if $||d\hat{f}_{1, y}(\hat{\K}/d)||\geq J^{-1}$ we obtain
\begin{align}
&\sum_{\max\{1, j_{2}, ...,  j_{R}\}\leq j_{1}\leq J}{\Psi_{\hat{\K}; d}(j_{1}, ...., j_{R})e(-dj_{1}\hat{f}_{1, y}(\hat{\K}/d))}\\
&\qquad\ll \frac{\Psi_{\hat{\K}; d}(J, j_{2}, ..., j_{R})}{||d\hat{f}_{1, y}(\hat{\K}/d)||}+\frac{1}{||d\hat{f}_{1, y}(\hat{\K}/d)||}\int_{1}^{J}{\Psi_{\hat{\K}; d}^{(1)}(\xi, j_{2}, ..., j_{R})\D \xi}\nonumber\\
&\qquad\ll \frac{\Psi_{\hat{\K}; d}(J, j_{2}, ..., j_{R})}{||d\hat{f}_{1, y}(\hat{\K}/d)||}.\nonumber
\end{align}
On the other hand if $||d\hat{f}_{1, y}(\hat{\K}/d)||<J^{-1}$ we have
\begin{align}
&\sum_{\max\{1, j_{2}, ...,  j_{R}\}\leq j_{1}\leq J}{\Psi_{\hat{\K}; d}(j_{1}, ...., j_{R})e(-dj_{1}\hat{f}_{1, y}(\hat{\K}/d))}\\
&\qquad\ll \Psi_{\hat{\K}; d}(J, j_{2}, ..., j_{R})J+\int_{1}^{J}{\xi\Psi^{(1)}_{\hat{\K}; d}(\xi, j_{2}, ..., j_{R})\D\xi}.\nonumber
\end{align}
To simplify further, we need estimates for $\Psi_{\hat{\K}; d}$ and $\Psi^{(1)}_{\hat{\K}; d}$ respectively.

\begin{lem}
Let $(u_{1}, ..., u_{R})\in \R_{>0}\times \R^{R-1}_{\geq 0}$ be such that $u_{r}\leq u_{1}$ for $2\leq r\leq R$. Then for any $\hat{\K}\in \mathscr{K}_{1}$ we have
\[|\Psi_{\hat{\K}; d}(u_{1}, ..., u_{R})|\ll u_{1}^{\frac{n}{2}+\frac{1}{2}}\]
and
\[|\Psi^{(1)}_{\hat{\K}; d}(u_{1}, ..., u_{R})|\ll u_{1}^{\frac{n}{2}-\frac{1}{2}},\]
where the respective implicit constants are independant of $y$, $d$ and $\hat{\K}$.
\end{lem}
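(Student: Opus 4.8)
The plan is to write $\Psi_{\hat{\K};d}$ as an explicit power of $u_1$ times the square root of a Hessian determinant and then differentiate in $u_1$, controlling every resulting term by the two-sided estimate (\ref{determinantbound}) together with the uniform derivative bounds of \ref{boundedfy}. Set $t_r=u_r/u_1$ for $2\le r\le R$; the hypothesis $u_r\le u_1$ gives $0\le t_r\le 1$, and $\hat{\K}\in\mathscr{K}_1$ means $\hat{\K}/d\in U_y\subseteq\overline{U_y}\subseteq B_\kappa(\hat{\X}_{0})\subseteq B_{2\tau}(\hat{\X}_{0})$. Since $H_{\hat{f}_{1,y}+t_2\hat{f}_{2,y}+\cdots+t_R\hat{f}_{R,y}}=H_{\hat{f}_{1,y}}+\sum_{r=2}^{R}t_r H_{\hat{f}_{r,y}}$, and the entries $\partial^2\hat{f}_{r,y}/\partial x_\mu\partial x_\nu(\hat{\K}/d)$ are bounded in absolute value by the constant $L$ from \ref{boundedfy} uniformly in $y\in\overline{\mathscr{Y}}$, the quantity
\[
D(t_2,\ldots,t_R):=\det\Bigl(H_{\hat{f}_{1,y}}(\hat{\K}/d)+\sum_{r=2}^{R}t_r\,H_{\hat{f}_{r,y}}(\hat{\K}/d)\Bigr)
\]
is a polynomial of total degree $n-1$ in $(t_2,\ldots,t_R)$ whose coefficients are, up to sign, sums of products of $n-1$ such entries; hence $|D|\ll1$ and $|\partial_{t_r}D|\ll1$ on $[0,1]^{R-1}$, with constants depending only on $n$, $R$ and $L$. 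Moreover $D$ is exactly the Hessian determinant appearing in (\ref{determinantbound}) evaluated at $\hat{\X}=\hat{\K}/d$, and the normalized parameter $(t_2,\ldots,t_R)$ lies in the set $T_n$ over which that bound was established, so $c_1\le|D|\le c_2$. The bound for $\Psi_{\hat{\K};d}$ is then immediate: $\Psi_{\hat{\K};d}(u_1,\ldots,u_R)=u_1^{\frac{n}{2}+\frac12}|D|^{\frac12}\le\sqrt{c_2}\,u_1^{\frac{n}{2}+\frac12}$.

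For $\Psi^{(1)}_{\hat{\K};d}$ I would differentiate in $u_1$ by the product rule, using $\partial_{u_1}(u_r/u_1)=-u_r/u_1^2$, to get
\[
\Psi^{(1)}_{\hat{\K};d}(u_1,\ldots,u_R)=\Bigl(\tfrac{n}{2}+\tfrac12\Bigr)u_1^{\frac{n}{2}-\frac12}\,|D|^{\frac12}-u_1^{\frac{n}{2}-\frac32}\sum_{r=2}^{R}u_r\,\partial_{t_r}\!\bigl(|D|^{\frac12}\bigr),
\]
where $D$ and its derivatives are evaluated at $(u_2/u_1,\ldots,u_R/u_1)$. The first summand is $\ll u_1^{\frac{n}{2}-\frac12}$ by $|D|\le c_2$. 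In the second, $\partial_{t_r}(|D|^{\frac12})=\tfrac12\,\sgn(D)\,|D|^{-\frac12}\partial_{t_r}D$, which is $\ll1$ by the lower bound $|D|\ge c_1$ and $|\partial_{t_r}D|\ll1$, while $u_r\le u_1$; hence the second summand is $\ll u_1^{\frac{n}{2}-\frac32}\cdot u_1=u_1^{\frac{n}{2}-\frac12}$. Adding the two gives $|\Psi^{(1)}_{\hat{\K};d}(u_1,\ldots,u_R)|\ll u_1^{\frac{n}{2}-\frac12}$, with all implied constants depending only on $n$, $R$, $c_1$, $c_2$ and $L$, hence independent of $y$, $d$ and $\hat{\K}$.

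The only genuinely delicate point is the lower bound $|D|\ge c_1>0$: the factor $|D|^{1/2}$ carries a square-root singularity wherever the Hessian determinant could vanish, and without the uniform positivity supplied by (\ref{determinantbound}) the estimate for $\Psi^{(1)}_{\hat{\K};d}$ would break down (whereas the bound for $\Psi_{\hat{\K};d}$ itself only uses the upper bound $|D|\le c_2$ and would survive even where $D$ vanishes). So the crux is simply to have recorded that $\hat{\K}/d\in B_{2\tau}(\hat{\X}_{0})$ — which is exactly the hypothesis $\hat{\K}\in\mathscr{K}_1$ — and that the normalized parameter lies in $T_n$; granted these, the lemma reduces to the elementary differentiation and polynomial bookkeeping above, and I expect no further obstacle.
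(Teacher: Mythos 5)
Your argument is correct and is essentially the paper's own proof: both write $\Psi_{\hat{\K};d}=u_1^{\frac{n+1}{2}}|D|^{\frac12}$ with $D$ a polynomial of degree $n-1$ in the ratios $u_r/u_1$ having uniformly bounded coefficients, obtain the first bound from the upper bound in (\ref{determinantbound}), and obtain the second by the product and chain rule, using the lower bound $|D|\geq c_1$ to control the factor $|D|^{-\frac12}$ and $u_r\leq u_1$ to absorb the extra power of $u_1$. Your explicit identification of the delicate point (the uniform lower bound on the Hessian determinant) matches exactly what the paper relies on.
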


\begin{proof}
The first estimate is an obvious consequence of (\ref{determinantbound}). Write
\[\det H_{\hat{f}_{1, y}+\alpha_{2}\hat{f}_{2, y}+\cdots + \alpha_{R}\hat{f}_{R, y}}=\sum_{\substack{0\leq \nu_{2}+\cdots +\nu_{R}\leq n-1\\0\leq \nu_{2}, ..., \nu_{R}}}{A_{\nu_{2}, ..., \nu_{R}}\alpha_{2}^{\nu_{2}}\cdots \alpha_{R}^{\nu_{R}}}.\]
Then since $\hat{\K}/d\in U_{y}$ we have $|A_{\nu_{2}, ..., \nu_{R}}|\ll 1$, where the implicit constant is independant of $y$, $d$ and $\hat{\K}$. Now by product and chain rule
\begin{align*}
&|\Psi_{\hat{\K}; d}(u_{1}, ..., u_{R})|\\
&\ll \frac{n+1}{2}u_{1}^{\frac{n}{2}-\frac{1}{2}}|\det{H_{\hat{f}_{1, y}+\frac{u_{2}}{u_{1}}\hat{f}_{2, y}+\cdots +\frac{u_{R}}{u_{1}}f_{R, y}}}(\hat{\K}/d)|^{\frac{1}{2}}\\
&+\frac{u_{1}^{\frac{n}{2}-\frac{1}{2}}}{2|\det{H_{\hat{f}_{1, y}+\frac{u_{2}}{u_{1}}\hat{f}_{2, y}+\cdots +\frac{u_{R}}{u_{1}}f_{R, y}}}(\hat{\K}/d)|^{\frac{1}{2}}}\sum_{\substack{0\leq \nu_{2}+\cdots +\nu_{R}\leq n-1\\0\leq \nu_{2}, ..., \nu_{R}}}{|A_{\nu_{2}, ..., \nu_{R}}|\left(\frac{u_{2}}{u_{1}}\right)^{\nu_{2}}\cdots \left(\frac{u_{R}}{u_{1}}\right)^{\nu_{R}}}\\
& \ll u_{1}^{\frac{n}{2}-\frac{1}{2}},
\end{align*}
since $0\leq u_{2}, ..., u_{R}\leq u_{1}$ and $u_{1}>0$ by assumption. 
\end{proof}
Therefore we obtain
\begin{align}\label{M1split}
M_{1}&\ll \frac{1}{D}\sum_{d=1}^{D}{\sum_{\substack{\hat{\K}\in\Z^{n-1}\\||d\hat{f}_{1, y}(\hat{\K}/d)||\leq J^{-1}}}{\omega\left(\frac{\hat{\K}}{d}\right)d^{-\frac{n}{2}+\frac{1}{2}}\left(\prod_{r=2}^{R}{J_{r}}\right) J^{\frac{n}{2}+\frac{3}{2}}} }\\
&+\frac{1}{D}\sum_{d=1}^{D}{\sum_{\substack{\hat{\K}\in \Z^{n-1}\\ J^{-1}<||d\hat{f}_{1, y}(\hat{\K}/d)||}}{\omega\left(\frac{\hat{\K}}{d}\right)d^{-\frac{n}{2}+\frac{1}{2}}\left(\prod_{r=2}^{R}{J_{r}}\right) J^{\frac{n}{2}+\frac{1}{2}}}||d\hat{f}_{1, y}(\hat{\K}/d)||^{-1} }\nonumber\\
&+\frac{1}{D}\sum_{d=1}^{D}{\sum_{\hat{\K}\in \mathscr{K}_{1}}{\left(\prod_{r=2}^{R}{J_{r}}\right)J^{\frac{n}{2}+\frac{1}{2}} d^{-\frac{n}{2}-\frac{1}{2}}}}.\nonumber
\end{align}
With a similar argument as in \ref{countK} we can bound the last term in (\ref{M1split}) by
\begin{align}\label{M1part3}
\frac{1}{D}\sum_{d=1}^{D}{\sum_{\hat{\K}\in \mathscr{K}_{1}}{\left(\prod_{r=2}^{R}{J_{r}}\right)J^{\frac{n}{2}+\frac{1}{2}} d^{-\frac{n}{2}-\frac{1}{2}}}}&\ll \left(\prod_{r=2}^{R}{J_{r}}\right)\frac{J^{\frac{n}{2}+\frac{1}{2}}}{D}\sum_{d=1}^{D}{d^{\frac{n}{2}-\frac{3}{2}}}\\
&\ll \left(\prod_{r=2}^{R}{J_{r}}\right)J^{\frac{n}{2}+\frac{1}{2}}D^{\frac{n}{2}-\frac{3}{2}}.\nonumber
\end{align}
In order to estiamte the second term in (\ref{M1split}) we want to sum dyadically. Note that since $||d\hat{f}_{1, y}(\hat{\K}/d)||\leq\frac{1}{2}$ we can assume $J^{-1}\leq \frac{1}{2}$ and obtain
\begin{align}\label{M1part2}
&\frac{1}{D}\sum_{d=1}^{D}{\sum_{\substack{\hat{\K}\in \Z^{n-1}\\ J^{-1}<||d\hat{f}_{1, y}(\hat{\K}/d)||}}{\omega\left(\frac{\hat{\K}}{d}\right)d^{-\frac{n}{2}+\frac{1}{2}}\left(\prod_{r=2}^{R}{J_{r}}\right) J^{\frac{n}{2}+\frac{1}{2}}}||d\hat{f}_{1, y}(\hat{\K}/d)||^{-1} }\\
&\qquad\leq \left(\prod_{r=2}^{R}{J_{r}}\right)\frac{J^{\frac{n}{2}+\frac{1}{2}}}{D}\sum_{i=1}^{\frac{\log J}{\log 2}+1}{J2^{1-i}\sum_{d=1}^{D}{d^{-\frac{n}{2}+\frac{1}{2}}\sum_{\substack{\hat{\K}\in \Z^{n-1}\\ \frac{2^{i-1}}{J}<||d\hat{f}_{1, y}(\hat{\K}/d)||\leq \frac{2^{i}}{J}}}{\omega\left(\frac{\hat{\K}}{d}\right)}}}.\nonumber
\end{align}
Now for both the first term in (\ref{M1split}) and (\ref{M1part2}) we utilize the following result from \cite[Theorem 2]{Hua1}: For any $X>0$ we have
\begin{equation}
\sum_{d=1}^{D}{\sum_{\substack{\hat{\K}\in \Z^{n-1}\\ ||df_{1}(\hat{\K}/d)||\leq X^{-1}}}{\omega\left(\frac{\hat{\K}}{d}\right)}}\ll X^{-1}D^{n}+D^{n-1}\mathscr{E}_{n-1}(D),
\end{equation}
where
\[\mathscr{E}_{m}(D)=\mathscr{E}_{m}^{(\idl{c}_{3}; \idl{c}_{4})}(D)=\left\{\begin{array}{*{2}{c}} \exp(\idl{c}_{3}\sqrt{\log D})& \mbox{if $m=2$}\\ (\log D)^{\idl{c}_{4}}& \mbox{if $m\geq 3$}\end{array}\right..\]
for some positive constans $\idl{c}_{3}$ and $\idl{c}_{4}$. Here the implicit constants aswell as $\idl{c}_{3}$ and $\idl{c}_{4}$ only depend on $n, c_{1}$ and $c_{2}$ in (\ref{determinantbound}), $\rho$ in (\ref{rho}), $\rho'$ in (\ref{rho'}) and upper bounds for (the absolute values) of finitely many derivatives of $\omega$ and $f_{1}$ on $\mathscr{D}_{+}\times \mathscr{Y}$. In particular, they are independent of $y$.\\
By partial summation we find that
\begin{equation}\label{hypersurfacebound}
\sum_{d=1}^{D}{\sum_{\substack{\hat{\K}\in \Z^{n-1}\\||d\hat{f}_{1, y}(\hat{\K}/d)||\leq X^{-1}}}{\omega\left(\frac{\hat{\K}}{d}\right) d^{-\frac{n}{2}+\frac{1}{2}}}}\ll D^{-\frac{n}{2}+\frac{1}{2}}(X^{-1}D^{n}+D^{n-1}\mathscr{E}_{n-1}(D)). 
\end{equation}
Therefore we can estimate the first term in (\ref{M1split}) by
\begin{align}\label{M1part1}
&\frac{1}{D}\sum_{d=1}^{D}{\sum_{\substack{\hat{\K}\in\Z^{n-1}\\||d\hat{f}_{1, y}(\hat{\K}/d)||\leq J^{-1}}}{\omega\left(\frac{\hat{\K}}{d}\right)d^{-\frac{n}{2}+\frac{1}{2}}\left(\prod_{r=2}^{R}{J_{r}}\right) J^{\frac{n}{2}+\frac{3}{2}}} }\\
&\ll \left(\prod_{r=2}^{R}{J_{r}}\right)\frac{J^{\frac{n}{2}+\frac{3}{2}}}{D}D^{-\frac{n}{2}+\frac{1}{2}}(J^{-1}D^{n}+D^{n-1}\mathscr{E}_{n-1}(D))\nonumber\\
&\ll \left(\prod_{r=2}^{R}{J_{r}}\right)(J^{\frac{n}{2}+\frac{1}{2}}D^{\frac{n}{2}-\frac{1}{2}}+J^{\frac{n}{2}+\frac{3}{2}}D^{\frac{n}{2}-\frac{3}{2}}\mathscr{E}_{n-1}(D))\nonumber
\end{align}
and (\ref{M1part2}) by
\begin{align}\label{M1part2final}
&\left(\prod_{r=2}^{R}{J_{r}}\right)\frac{J^{\frac{n}{2}+\frac{1}{2}}}{D}\sum_{i=1}^{\frac{\log J}{\log 2}+1}{J2^{1-i}\sum_{d=1}^{D}{d^{-\frac{n}{2}+\frac{1}{2}}\sum_{\substack{\hat{\K}\in \Z^{n-1}\\ \frac{2^{i-1}}{J}<||d\hat{f}_{1, y}(\hat{\K}/d)||\leq \frac{2^{i}}{J}}}{\omega\left(\frac{\hat{\K}}{d}\right)}}}\\&\ll \left(\prod_{r=2}^{R}{J_{r}}\right)\frac{J^{\frac{n}{2}+\frac{1}{2}}}{D}\sum_{i=1}^{\frac{\log J}{\log 2}+1}{J2^{1-i}D^{-\frac{n}{2}+\frac{1}{2}}(2^{i}J^{-1}D^{n}+D^{n-1}\mathscr{E}_{n-1}(D))}\nonumber\\
&\ll \left(\prod_{r=2}^{R}{J_{r}}\right)\frac{J^{\frac{n}{2}+\frac{1}{2}}}{D}((\log J)D^{\frac{n}{2}+\frac{1}{2}}+JD^{\frac{n}{2}-\frac{1}{2}}\mathscr{E}_{n-1}(D))\nonumber\\
&\ll \left(\prod_{r=2}^{R}{J_{r}}\right)((\log J)J^{\frac{n}{2}+\frac{1}{2}}D^{\frac{n}{2}-\frac{1}{2}}+J^{\frac{n}{2}+\frac{3}{2}}D^{\frac{n}{2}-\frac{3}{2}}\mathscr{E}_{n-1}(D)).\nonumber
\end{align}
Putting together (\ref{M1part3}), (\ref{M1part1}) and (\ref{M1part2final}) yields
\begin{align}\label{M1final}
M_{1}&\ll \left(\prod_{r=2}^{R}{J_{r}}\right)(J^{\frac{n}{2}+\frac{1}{2}}D^{\frac{n}{2}-\frac{1}{2}}+J^{\frac{n}{2}+\frac{3}{2}}D^{\frac{n}{2}-\frac{3}{2}}\mathscr{E}_{n-1}(D)) \\
&+\left(\prod_{r=2}^{R}{J_{r}}\right)((\log J)J^{\frac{n}{2}+\frac{1}{2}}D^{\frac{n}{2}-\frac{1}{2}}+J^{\frac{n}{2}+\frac{3}{2}}D^{\frac{n}{2}-\frac{3}{2}}\mathscr{E}_{n-1}(D))\nonumber\\
&+ \left(\prod_{r=2}^{R}{J_{r}}\right)J^{\frac{n}{2}+\frac{1}{2}}D^{\frac{n}{2}-\frac{3}{2}}\nonumber\\
&\ll \left(\prod_{r=2}^{R}{J_{r}}\right)((\log J)J^{\frac{n}{2}+\frac{1}{2}}D^{\frac{n}{2}-\frac{1}{2}}+J^{\frac{n}{2}+\frac{3}{2}}D^{\frac{n}{2}-\frac{3}{2}}\mathscr{E}_{n-1}(D)).\nonumber
\end{align}
\paragraph{Final estimate.} Recall $D=\subgauss{T/2}$ and $T\geq 2$. By combining (\ref{Mestiamte}), (\ref{M2}), (\ref{M3final}) and (\ref{M1final}) we obtain
\begin{align}
\mathscr{M}(J, T^{-1})&\ll \left(\prod_{r=2}^{R}{J_{r}}\right)\frac{J^{n+1}}{T}+\left(\prod_{r=2}^{R}{J_{r}}\right)((\log J)J^{\frac{n}{2}+\frac{1}{2}}T^{\frac{n}{2}-\frac{1}{2}}+J^{\frac{n}{2}+\frac{3}{2}}T^{\frac{n}{2}-\frac{3}{2}}\mathscr{E}_{n-1}(T))\\
&+\left(\prod_{r=2}^{R}{J_{r}}\right)\log J +\left(\prod_{r=2}^{R}{J_{r}}\right)J^{\frac{n}{2}+\frac{1}{2}}T^{\frac{n}{2}-\frac{3}{2}}\nonumber\\
&\ll \left(\prod_{r=2}^{R}{J_{r}}\right)(J^{n+1}T^{-1}+(\log J)J^{\frac{n}{2}+\frac{1}{2}}T^{\frac{n}{2}-\frac{1}{2}}+J^{\frac{n}{2}+\frac{3}{2}}T^{\frac{n}{2}-\frac{3}{2}}\mathscr{E}_{n-1}(T)).\nonumber
\end{align}
We distinguish two cases. First if $T^{-1}\leq J^{-1}$ we have
\begin{equation}\label{Mcase1}
\mathscr{M}(J, T^{-1})\leq \mathscr{M}(J, J^{-1})\ll \left(\prod_{r=2}^{R}{J_{r}}\right)J^{n}((\log J) +\mathscr{E}_{n-1}(J)).
\end{equation}
On the other hand, if $T^{-1}>J$, i.e $J> T$, then
\begin{equation}\label{Mcase2}
\mathscr{M}(J, T^{-1})\ll \left(\prod_{r=2}^{R}{J_{r}}\right)(J^{n+1}T^{-1}+J^{n}((\log J)+\mathscr{E}_{n-1}(J))).
\end{equation}
Therefore we conclude
\[\sum_{\substack{1\leq j_{1}\leq J\\0\leq j_{r}\leq \min\{J_{r}, j_{1}\} \\2\leq r\leq R}}{\sum_{\substack{\K\in \Z^{n}\\ ||j_{1}\varphi_{y}(\xcrit)||<T^{-1}}}{\omega^{\ast}_{\J}\left(\frac{\hat{\K}}{j_{1}}\right)}}=\mathscr{M}(J, T^{-1})\ll \left(\prod_{r=2}^{R}{J_{r}}\right)(J^{n+1}T^{-1}+J^{n}\mathscr{E}_{n-1}(J)).\]
Recall that $\omega^{\ast}_{\J}(\hat{\K}/j_{1})=\omega\circ (\nabla \hat{F}_{y, \J})^{-1}(\hat{\K}/j_{1})=\omega(\xcrit)$, hence \ref{essential} follows.

\section{Proof of Theorem \ref{mainthm}}\label{maintheoremsec}
Recall (\ref{Nsplit}), hence with the bounds obtained for $N_{1}, N_{2}$ and $N_{3}$ in Section \ref{secbounds}, i.e. (\ref{case1}), (\ref{case2}) and (\ref{case3}), we have
\begin{align}
N^{(1; (1, ..., 1), n)}(Q, \delta)&\ll N_{1}+N_{2}+N_{3}\\
&\ll (1+\log J)^{R}((\log Q)Q^{\frac{n}{2}+\frac{1}{2}}J^{\frac{n}{2}+\frac{1}{2}}+Q^{\frac{n}{2}+\frac{3}{2}}J^{\frac{n}{2}-\frac{1}{2}}\mathscr{E}_{n-1}(J))\nonumber\\
&+\log Q (1+\log J)^{R}+J^{\frac{n}{2}-\frac{1}{2}}Q^{\frac{n}{2}+\frac{1}{2}}(1+\log J)^{R}.\nonumber\\
&\ll (1+\log J)^{R}((\log Q)Q^{\frac{n}{2}+\frac{1}{2}}J^{\frac{n}{2}+\frac{1}{2}}+Q^{\frac{n}{2}+\frac{3}{2}}J^{\frac{n}{2}-\frac{1}{2}}\mathscr{E}_{n-1}(J))\nonumber
\end{align}
Now with (\ref{maineq}), (\ref{NrQd}) and the remark made right after (\ref{NrQd}) we obtain
\begin{align}
&|N_{\omega}(Q, \delta)-(2\delta)^{R}N_{0}|\\
&\ll \delta^{R-1}\frac{Q^{n+1}}{J}+\frac{Q^{n+1}}{J^{R}}+ (1+\log J)^{R}(\log Q)Q^{\frac{n}{2}+\frac{1}{2}}J^{\frac{n}{2}+\frac{1}{2}}\nonumber\\
&+(1+\log J)^{R}Q^{\frac{n}{2}+\frac{3}{2}}J^{\frac{n}{2}-\frac{1}{2}}\mathscr{E}_{n-1}(J).\nonumber
\end{align}
Note that the constants $\idl{c}_{1}'$ and $\idl{c}_{2}'$ in $\mathscr{E}_{n-1}(J)=\mathscr{E}^{(\idl{c}_{1}'; \idl{c}_{2}')}_{n-1}(J)$ as well as the implicit constants only depend on $n$, $R$, $c_{1}$ and $c_{2}$ in (\ref{determinantbound}), $\rho$ in (\ref{rho}), $\rho'$ in (\ref{rho'}) for each choice of $(r; \bepsilon; \nu)$ ($1\leq r\leq R, \bepsilon\in \{\pm 1\}^{R}, 1\leq \nu\leq n$) and upper bounds for (the absolute values) of finitely many derivatives of $\omega$ and $f_{1}$ on $\mathscr{D}_{+}\times \mathscr{Y}$. Recall that while we only adressed the case $(r; \bepsilon; \nu)$ the bounds are identical in each case. Since we can still choose the parameter $J\geq 1$, consider the equivalences
\begin{align}\label{Jequiv}
Q^{\frac{n}{2}+\frac{1}{2}}J^{\frac{n}{2}+\frac{1}{2}}< Q^{\frac{n}{2}+\frac{3}{2}}J^{\frac{n}{2}-\frac{1}{2}}&\quad \Leftrightarrow \quad J< Q\\
\delta^{R-1}\frac{Q^{n+1}}{J} < \frac{Q^{n+1}}{J^{R}}&\quad\Leftrightarrow\quad  J < \delta^{-1}\nonumber\\
\frac{Q^{n+1}}{J^{R}}\leq Q^{\frac{n}{2}+\frac{3}{2}}J^{\frac{n}{2}-\frac{1}{2}}&\quad \Leftrightarrow\quad Q^{\frac{n-1}{n+2R-1}} \leq J\nonumber\\
\delta^{R-1}\frac{Q^{n+1}}{J}\leq Q^{\frac{n}{2}+\frac{3}{2}}J^{\frac{n}{2}-\frac{1}{2}}&\quad \Leftrightarrow\quad  \delta^{\frac{2(R-1)}{n+1}}Q^{\frac{n-1}{n+1}}\leq J\nonumber
\end{align}
and distinguish two cases. If $\delta^{-1}>Q^{\frac{n-1}{n+2R-1}}$ then let $J=Q^{\frac{n-1}{n+2R-1}}$, so by the first, second and third equivalence in \ref{Jequiv} we have 
\[|N_{\omega}(Q, \delta)-(2\delta)^{R}N_{0}|\ll (\log Q)^{R}Q^{\frac{n^{2}+Rn+3R-1}{n+2R-1}}\mathscr{E}_{n-1}(Q).\]
If $\delta^{-1}\leq Q^{\frac{n-1}{n+2R-1}}$ then let $J=\delta^{\frac{2(R-1)}{n+1}}Q^{\frac{n-1}{n+1}}\geq \delta^{-1}$, so by the second, third and fourth equivalence in \ref{Jequiv} we have
\[|N_{\omega}(Q, \delta)-(2\delta)^{R}N_{0}|\ll \delta^{\frac{(R-1)(n-1)}{n+1}}(\log Q)^{R}Q^{\frac{n^{2}+n+2}{n+1}}\mathscr{E}_{n-1}(Q).\]
Note that
\[(\log Q)^{R}\mathscr{E}_{n-1}(Q)=(\log Q)^{R}\mathscr{E}_{n-1}^{(\idl{c}_{1}'; \idl{c}_{2}')}(Q)= \left\{\begin{array}{*{2}{c}} \exp(\idl{c}_{1}'\sqrt{\log Q}+R\log \log Q)& \mbox{if $n=3$,}\\ (\log Q)^{\idl{c}_{2}'+R}& \mbox{if $n\geq 4$,}\end{array}\right.\]
hence for some absolute constant $c_{0}$ we can choose $\idl{c}_{1}=\idl{c}_{1}'+c_{0}R$ and $\idl{c}_{2}=\idl{c}_{2}'+R$ and obtain
\[(\log Q)^{R}\mathscr{E}_{n-1}^{(\idl{c}_{1}'; \idl{c}_{2}')}(Q)\ll \mathscr{E}_{n-1}^{(\idl{c}_{1}; \idl{c}_{2})}(Q).\]
This completes the proof of theorem \ref{mainthm}.

\renewcommand{\refname}{References}

{\sc Mathematisches Institut, Georg-August Universit\"at G\"ottingen, Bunsenstrasse 3-5, 37073 G\"ottingen, Germany}

{\it Email address:} {\tt florian.munkelt@mathematik.uni-goettingen.de}

\end{document}